\numberwithin{equation}{section}
\newcommand{\adam}[1]{{\color{blue}{#1}}}
\newcommand{\stig}[1]{{\color{blue}{#1}}}
\newcommand{\R}{{\mathbf{R}}}
\newcommand{\E}{{\mathbf{E}}}
\newcommand{\N}{{\mathbf{N}}}
\newcommand{\D}{{\mathcal{D}}}
\newcommand{\dom}{{\mathrm{D}}}
\newcommand{\LB}{{\mathcal{L}}}
\newcommand{\diff}[1]{\,\mathrm{d}#1}
\newcommand{\C}{\mathcal{C}}
\newcommand{\Cb}{\mathcal{G}_{\mathrm{b}}}
\newcommand{\Cp}{\mathcal{G}_{\mathrm{p}}}
\theoremstyle{plain}
\newtheorem{definition}{Definition}[section]
\newtheorem{theorem}[definition]{Theorem}
\newtheorem{lemma}[definition]{Lemma}
\newtheorem{corollary}[definition]{Corollary}
\newtheorem{proposition}[definition]{Proposition}
\theoremstyle{definition}
\newtheorem{remark}[definition]{Remark}
\begin{document}
\title[Error analysis for semilinear stochastic Volterra equations]{Weak error analysis for semilinear stochastic Volterra equations with additive noise}

\author[A.~Andersson]{Adam Andersson}
\address{Adam Andersson\\
Technische \stig{U}niver\stig{s}it\"at Berlin\\
Institut f\"ur mathem\stig{a}tik\\
Secr.~MA 5-3, Strasse des 17.~Juni 136, DE--10623 Berlin, Germany}
\email{andersson@math.tu-berlin.de}

\author[M.~Kov\'{a}cs]{Mih\'{a}ly Kov\'{a}cs}
\address{Mih\'aly Kov\'acs\\
Department of mathematics and statistics\\
University of Otago\\
PO Box 56, Dunedin\\
9054\\
New Zealand}
\email{mkovacs@maths.otago.ac.nz}

\author[S.~Larsson]{Stig Larsson}
\address{Stig Larsson\\
Department of Mathematical Sciences\\
Chalmers University of Technology and University of Gothenburg\\
SE--412 96 Gothenburg\\
Sweden}
\email{stig@chalmers.se}

\keywords{Stochastic Volterra equation, finite element method, backward Euler, convolution quadrature, strong and weak convergence, Malliavin calculus, regularity, duality}
\subjclass[2010]{60H15, 60H07, 65C30, 65M60}

\begin{abstract}
  We prove a weak error estimate for the approximation in space and time of a semilinear stochastic Volterra integro-differential equation driven by additive space-time Gaussian noise. We treat this equation in an abstract framework, in which parabolic stochastic partial differential equations are also included as a special case. The approximation in space is performed by a standard finite element method and in time by an implicit Euler method combined with a convolution quadrature. The weak rate of convergence is proved to be twice the strong rate, as expected. Our convergence result concerns not only functionals of the solution at a fixed time but also more complicated functionals of the entire path and includes convergence of covariances and higher order statistics. The proof does not rely on a Kolmogorov equation. Instead it is based on a duality argument from Malliavin calculus.
\end{abstract}

\maketitle

\section{Introduction}
\label{sec1}
Let $(S_t)_{t\in[0,T]}$ be an evolution family of bounded, self-adjoint, linear operators on a separable Hilbert space $(H,\|\cdot\|,\langle\cdot,\cdot\rangle)$, not necessarily enjoying the semigroup property. Related to $(S_t)_{t\in[0,T]}$ is a densely defined, linear, self-adjoint, positive definite operator $A\colon \D(A)\subset H\to H$  with compact inverse. Let $(A^\alpha)_{\alpha\in\R}$ denote the fractional powers of $A$, which are well defined, let $(\dot{H}^\alpha)_{\alpha\in{R}}$ denote the spaces $\dot{H}^\alpha=\D(A^\alpha)$ for $\alpha\geq0$ with dual spaces $\dot{H}^{-\alpha}=(\dot{H}^\alpha)^*$. We assume that $(S_t)_{t\in[0,T]}$ is strongly differentiable with derivative $(\dot{S}_t)_{t\in[0,T]}$ and that there exist $\rho\in[1,2)$ and constants $(L_s)_{s\in[0,2]}$ so that
\begin{align}
\label{as:S}
  \big\|
    A^{\frac{\min(1,s)}\rho}
    S_t
    x
  \big\|
  +
  \big\|
    A^{\frac{s-1}\rho}
    \dot{S}_t
    x
  \big\|
  \leq
  L_s
  t^{-s}
  \|x\|
  ,\quad
  t\in(0,T]
  ,\
  x\in H
  ,\
  s\in[0,2].
\end{align}
If $(S_t)_{t\in[0,T]}$ is the analytic semigroup generated by $-A$, then \eqref{as:S} holds with $\rho=1$. If $(S_t)_{t\in[0,T]}$ is the solution operator $S_tx=Y_t^x$ of the Volterra equation
\begin{align*}
  \dot{Y}_t^x
  +
  \int_0^t
    b_{t-s}AY_s^x
  \diff{s}
  =0,
  \quad
  t\in(0,T]
  ;\quad
  Y_t^x=x,
\end{align*}
where $b\colon (0,\infty)\to\R$ is the Riesz kernel
$
  b_t
  =
  t^{\rho-2}
  /
  \Gamma(\rho-1)
$
for some $\rho\in(1,2)$, then $(S_t)_{t\in[0,T]}$ satisfies \eqref{as:S}.
The latter example is the main motivation of the present paper. In Subsection \ref{sec5:2} we verify \eqref{as:S} for slightly more general kernels $b$.

The main object of study in this paper is the stochastic evolution equation
\begin{align}
\label{eq:SVIE}
  X_t
  =
  S_tx_0+
  \int_0^tS_{t-s}F(X_s)\diff{s}
  +
  \int_0^tS_{t-s}\diff{W_s}
  ,\quad
  t\in[0,T].
\end{align}
The noise is generated by a cylindrical $Q$-Wiener process $W$ on a filtered probability space $(\Omega,\mathcal{F},(\mathcal{F}_t)_{t\in[0,T]},\mathbf{P})$ with positive semidefinite self-adjoint covariance operator $Q\in\LB(H)$, where the latter is the space of bounded linear operators on $H$. Let $H_0=Q^{\frac12}(H)$, and let $\LB_2$ and $\LB_2^0$ denote the spaces of Hilbert-Schmidt operators $H\to H$ and $H_0\to H$, respectively. The regularity of the noise is measured by a parameter $\beta\in(0,1/\rho]$, by assuming
\begin{align}
\label{as:Q}
  \big\|
    A^{\frac{\beta\rho-1}{2\rho}}
  \big\|_{\LB_2^0}
  =
  \big\|
    A^{\frac{\beta\rho-1}{2\rho}}Q^\frac12
  \big\|_{\LB_2}
  <
  \infty.
\end{align}
Under this assumption $X_t\in\dot{H}^\beta$, $\mathbf{P}$-almost surely. The smoothest case $\beta=1/\rho$ corresponds to trace class noise as \eqref{as:Q} reduces to $\|Q^{\frac12}\|_{\LB_2}=\sqrt{\mathrm{Tr}(Q)}<\infty$.

For Hilbert spaces $U$, $V$ the space $\Cb^k(U;V)$ consists of all, not necessarily bounded, functions $\phi\colon U\to V$, whose G\^ateaux derivatives of orders $1,\dots,k$ are bounded, symmetric and strongly continuous. The non-linear drift $F\colon H\to H$ is assumed to satisfy, for some $\delta\in[0,2/\rho)$,
\begin{align}
\label{as:F}
F\in\Cb^1(H;H)\cap\Cb^2(H;\dot{H}^{-\delta}).
\end{align}
This assumption includes interesting cases where $F\not\in \Cb^2(H;H)$, e.g., Nemytskii operators on $H=L^2(\dom)$ for a spatial domain $\dom\subset\R^d$, with $\delta>d/2$. The initial value $x_0$ is deterministic and satisfies
\begin{align}
\label{as:x0}
  x_0\in\dot{H}^3:=\D(A^\frac32).
\end{align}

In the present paper we study weak convergence of approximations of the solution of \eqref{eq:SVIE}. Our main example is the mild solution of the stochastic Volterra integro-differential equation
\begin{equation}
\begin{split}
\label{eq:SVDE}
  \diff{X_t}
  +
  \Big(
    \int_0^tb_{t-s}AX_s\diff{s}
  \Big)
  \diff{t}
  =
  F(X_t)\diff{t}
  +
  \diff{W_t}
  ,\ t\in[0,T]
  ;\quad
  X_0
  =x_0,
\end{split}
\end{equation}
where $b_t=t^{\rho-2}/\Gamma(\rho-1)$ as above or slightly more general. Discretization in time is performed by the backward Euler method and the convolution integral is approximated by a convolution quadrature. For spatial approximation either spectral or finite element approximation is considered. In the papers \cite{kovacs2013}, \cite{kovacs2014}, strong, respectively weak, convergence of numerical approximations were proven, for linear stochastic Volterra equations $(F=0)$. The deterministic error analysis needed for the present paper will be cited from these papers.

Another example to which our results apply is the mild solution of the parabolic stochastic evolution equation
\begin{equation}
\begin{split}
\label{eq:SPDE}
  \diff{X_t}
  +
  AX_t
  \diff{t}
& =
  F(X_t)\diff{t}
  +
  \diff{W_t}
  ,\ t\in[0,T];
  \quad X_0 =x_0.
\end{split}
\end{equation}
Approximation in time is performed by the backward Euler method and the same spatial approximation is considered as for \eqref{eq:SVDE}. Weak convergence analysis for \eqref{eq:SPDE} is well studied \cite{AnderssonKruseLarsson}, \cite{AnderssonLarsson}, \cite{Brehier}, \cite{Brehier3}, \cite{conus2014}, \cite{debussche2011}, \cite{hausenblas2003Weak}, \cite{hausenblas2010}, \cite{jentzen2015}, \cite{Wang}, \cite{Wang2014}, \cite{WangGan}. In contrast to \cite{AnderssonKruseLarsson} we allow the nonlinear drift $F$ to be a Nemytskii operator not only in one space dimension but also in two and three space dimensions, without imposing restrictions on the choice of the spatial approximation. We also consider a more general form of the weak error, see \eqref{eq:weak_distance} below. We thus present some new results also for \eqref{eq:SPDE}.

Let $K\in\N$ and $\varphi_i\colon H\rightarrow\R$, $i=1,\dots,K$, be twice G\^ateaux differentiable mappings of polynomial growth and $\nu_1,\dots,\nu_K$ finite Borel measures on $[0,T]$. We consider the generalized weak error
\begin{align}
\label{eq:weak_distance}
  \Big|
    \E\Big[
      \Phi
      \big(
        X
      \big)
      -
      \Phi
      \big(
        Y
      \big)
    \Big]
  \Big|,
  \quad
  \textrm{with}
  \quad
  \Phi(Z)
  =
  \prod_{i=1}^K
  \varphi_i
  \Big(
    \int_0^T
      Z_t
    \diff{\nu_{i,t}}
  \Big),
\end{align}
for $X,Y,Z\in \cap_{i=1}^K L_{\nu_i}^1(0,T;L^p(\Omega;H))$ with a suitable exponent $p\geq2$. In all the works we are aware of, \eqref{eq:weak_distance} is considered with $K=1$, $\nu_1=\nu=\delta_\tau$, where $\delta_\tau$ is the Dirac measure concentrated at $\tau$, for fixed $\tau\in(0,T]$. In that case $\E[\varphi(X_\tau)]$ is the solution to a Kolmogorov PDE, which is used in the analysis. Unfortunately, this is not true for $\E[\varphi(\int_0^T X_t\diff{\nu_t})]$. Moreover, Volterra equations are non-Markovian, so there is no Kolmogorov equation available for the analysis. Instead, we use another approach to analyze \eqref{eq:weak_distance} that was recently introduced in \cite{AnderssonKruseLarsson}. The approach relies on a duality argument with a Gelfand triple of refined Sobolev-Malliavin spaces. In \cite{AnderssonKruseLarsson} the technique was demonstrated in the Markovian setting of \eqref{eq:SPDE} and $\nu=\delta_\tau$. In the present paper we apply it in a setting where no other known approach applies.

Our main result, Theorem~\ref{thm:main}, shows convergence of the weak error of the form \eqref{eq:weak_distance} for abstractly defined approximations of the solution $X$ to \eqref{eq:SVIE}.  The general form of the functional $\Phi$ allows us  to prove convergence of approximations of covariances
\begin{align*}
  \mathrm{Cov}
  \big(
    \big\langle
      X_{t_1},\phi_1
    \big\rangle
    ,
    \big\langle
      X_{t_2},\phi_2
    \big\rangle
   \big),
   \quad
   \phi_1,\phi_2\in H,\ t_1,t_2\in(0,T],
\end{align*}
in Corollary~\ref{cor:main}. The generalization to higher order statistics is straightforward and omitted.

The paper is organized as follows: In Subsection \ref{sec2:1} we fix
the basic notation and in Subsection \ref{sec2:2} we recall the theory
of refined Sobolev-Malliavin spaces from
\cite{AnderssonKruseLarsson}. In Section \ref{sec3} we discuss
existence and uniqueness of solutions of \eqref{eq:SVIE} and prove
temporal H\"{o}lder regularity in the classical $L^p(\Omega;H)$-sense
and in the weaker sense of a dual Sobolev-Malliavin norm. In Section
\ref{sec:weak} we present an abstractly defined approximation scheme
for \eqref{eq:SVIE} and prove our main result on weak convergence,
Theorem~\ref{thm:main}. In addition, we prove strong convergence in
Theorem~\ref{thm:strong}, which is then used to establish Malliavin
regularity for the solution to \eqref{eq:SVIE} by a limiting
procedure. In Section \ref{sec5} we verify our abstract assumptions
for semilinear parabolic stochastic partial differential equations and
stochastic Volterra integro-differential equations.

\section{Preliminaries}
\label{sec2}
\subsection{Spaces of functions and operators}
\label{sec2:1}
Let $(U,\|\cdot\|_U,\langle\cdot,\cdot\rangle_U)$,
$(V,\|\cdot\|_V,\langle\cdot,\cdot\rangle_V)$ be separable Hilbert
spaces. Let $\LB(U;V)$ be the Banach space of all bounded linear
operators $U\rightarrow V$. We use the abbreviations $\LB(U)=\LB(U;U)$
and $\LB=\LB(H)$, where $H$ is the Hilbert space introduced in Section
\ref{sec1}. By $\LB_2(U;V)\subset\LB(U;V)$ we denote the subspace of
all Hilbert-Schmidt operators. It is a Hilbert space endowed with the
norm and inner product
\begin{align}
  \label{eq2:HSnorm}
  \|T\|_{\LB_2(U;V)}=\Big(\sum_{j\in\N}\|T u_j\|_V^2\Big)^\frac12,
  \quad \langle S,T\rangle_{\LB_2(U;V)}=\sum_{j\in\N}\langle
  Su_j,Tu_j\rangle_{V}.
\end{align}
Both are independent of the specific choice of ON-basis
$(u_j)_{j\in\N}\subset U$.

For $k\geq1$, let $\LB^{[k]}(U;V)$ be the Banach space of all
multilinear mappings $b\colon U^k\to V$, equipped with the norm
\begin{align*}
  \|
    b
  \|_{\LB^{[k]}(U;V)}
  =
  \sup_{u_1,\dots, u_k\in U}
  \frac{
    \|
      b\cdot(u_1,\dots,u_k)
    \|_V
  }{
    \|u_1\|_U
    \cdots
    \|u_k\|_U
  }.
\end{align*}
It is clear that $\LB^{[1]}(U;V)=\LB(U;V)$.

Denote by $\C(U;V)$ the space of all continuous mappings $U\to V$ and
further by $\C_{\mathrm{str}}(U;\LB^{[k]}(U;V))$ the space of strongly
continuous mappings $U\to \LB^{[k]}(U;V)$, i.e., mappings
$B\colon U\to \LB^{[k]}(U;V)$ such that for $u_1,\dots,u_k\in U$, the
mapping
\begin{align*}
  U\ni x
  \mapsto
  B(x)
  \cdot
  (u_1,\dots,u_k)
  \in V,
\end{align*}
is continuous. A function $\phi\colon U\to V$ is said to be $k$ times
G\^ateaux differentiable if the recursively defined derivatives,
$\phi^{(l)}\colon U^{l+1}\to V$, $l\in\{1,\dots,k\}$,
\begin{align*}
&\phi^{(l)}(x)\cdot (u_1,\dots,u_l)\\
&\quad
  =
  \lim_{\epsilon\to0}
  \frac{
    \phi^{(l-1)}(x+\epsilon u_l)
    \cdot(u_1,\dots,u_{l-1})
    -
    \phi^{(l-1)}(x)
    \cdot(u_1,\dots,u_{l-1})
  }\epsilon,
\end{align*}
exist for $u_1,\dots,u_l,x\in U $, $l\in\{1,\dots,k\}$, as limits in
$V$, where $\phi^{(0)}=\phi$. This class of functions is large and
fails to have natural properties, e.g., G\^ateaux differentiability
does not imply continuity and the multilinear mapping $\phi^{(l)}(x)$
may not be symmetric. We therefore introduce a smaller class, with
useful properties. For $k\geq1$, let
$\mathcal{G}^k(U;V)\subset\C(U;V)$ be the subset of all $k$-times
G\^ateaux differentiable mappings $\phi\in \C(U;V)$, whose
derivatives $\phi^{(l)}\in \C_{\mathrm{str}}(U;\LB^{[l]}(U;V))$,
$l\in\{1,\dots,k\}$, are symmetric. This is a weaker assumption than
requiring $\phi^{(l)}\in \C(U;\LB^{[l]}(U;V))$, $l\in\{1,\dots,k\}$,
which would be the same as assuming Fr\'echet differentiability. For
integers $k\in\{0,\dots,m\}$ and $\phi\in\mathcal{G}^k(U;V)$, let
\begin{align}
\label{eq:Cpkm}
  |\phi|_{\Cp^{k,m}(U;V)}
  =
  \sup_{u\in U}
  \frac{
    \|\phi^{(k)}(u)\|_{\LB^{[k]}(U;V)}
  }{
    (1+\|u\|_U^{m-k})
  },
\end{align}
and let $\Cp^{k,m}(U;V)$ be the space of $\phi\in\mathcal{G}^k(U;V)$
such that $|\phi|_{\Cp^{l,m}(U;V)}<\infty$ for $l\in\{1,\dots,k\}$. Let
$\Cp^\infty(U;V)$ be the space of all infinitely many times
differentiable mappings $\phi\colon U\to V$ such that $\phi$ and all
its derivatives satisfy a polynomial bound. Let $\Cb^k(U;V)$ denote
the space of $\phi\in\mathcal{G}^k(U;V)$ such that
\begin{align*}
  |\phi|_{\Cb^{l}(U;V)}
  =
  \sup_{u\in U}
  \|\phi^{(l)}(u)\|_{\LB^{[l]}(U;V)}
  <\infty
  ,\quad
l\in\{1,\dots,k\} .
\end{align*}
For $\phi\in\mathcal{G}^1(U;\R)$ we can identify the derivative with
the gradient $\phi'(u)\in U^*=U$, by the Riesz Representation
Theorem. For $m\geq 1$, $\phi\in\mathcal{G}_{\mathrm{p}}^{1,m}(U;V)$,
the map
$[0,1]\ni \lambda \mapsto \phi'(y+\lambda(x-y))\cdot (x-y)\in V $ is
continuous and Bochner integrable and therefore
\begin{equation}
\begin{split}
\label{eq:Taylor}
  \phi(x)
& =
  \phi(y)
  +
  \int_0^1\phi'(y+\lambda(x-y))\cdot(x-y)\diff{\lambda},
  \quad
  x,y\in U.
\end{split}
\end{equation}

By $\mathcal{M}_T$ we denote the space of all finite Borel measures on the interval $[0,T]$. For $\nu\in\mathcal{M}_T$ we write $|\nu|=\nu([0,T])$ and for a Banach space $V$ we let $L_\nu^p(0,T;V)$ be the Bochner space of $\nu$-measurable mappings $Z\colon[0,T]\rightarrow V$ such that
\begin{align*}
  \big\|
    Z
  \big\|_{L_\nu^p(0,T;V)}
  =
  \Big(
    \int_0^T
      \big\|
        Z_t
      \big\|_V^p
    \diff{\nu_t}
  \Big)^\frac1p
  <
  \infty,
\end{align*}
with the usual modification for $p=\infty$. When $\nu$ is Lebesgue measure we write $L^p(0,T;V)$.

The next lemma is used in the proof of Malliavin regularity by a limiting procedure in Proposition~\ref{lemma:Stability3}.
\begin{lemma}
\label{lemma:LimitReg}
Let $\mathcal{X}$, $\mathcal{Y}$ be separable Hilbert spaces such that the embedding
$
  \mathcal{X}
  \subset
  \mathcal{Y}
$
is continuous. If $x\in\mathcal{Y}$ and $(x_n)_{n\in\N}\subset\mathcal{X}$ satisfies $x_n\to x$ weakly in $\mathcal{Y}$ as $n\to\infty$ and $\sup_{n\in\N}\|x_n\|_{\mathcal{X}}<\infty$, then $x\in\mathcal{X}$.
\end{lemma}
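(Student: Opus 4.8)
The plan is to use weak sequential compactness of bounded sets in the Hilbert space $\mathcal{X}$. Since $\sup_{n}\|x_n\|_{\mathcal{X}}<\infty$ and $\mathcal{X}$ is a Hilbert space, hence reflexive, the Banach--Alaoglu theorem (equivalently, the Eberlein--\v{S}mulian theorem) yields a subsequence $(x_{n_k})_{k\in\N}$ and an element $y\in\mathcal{X}$ with $x_{n_k}\rightharpoonup y$ weakly in $\mathcal{X}$ as $k\to\infty$.

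Next I would transfer this weak convergence to $\mathcal{Y}$. Let $\iota\colon\mathcal{X}\to\mathcal{Y}$ denote the embedding, which by assumption is a bounded linear operator. For any $\psi\in\mathcal{Y}^*$ the composition $\psi\circ\iota$ is a bounded linear functional on $\mathcal{X}$, so $(\psi\circ\iota)(x_{n_k})\to(\psi\circ\iota)(y)$, i.e.\ $\psi(x_{n_k})\to\psi(y)$. Thus $x_{n_k}\rightharpoonup y$ weakly in $\mathcal{Y}$ as well. On the other hand, the hypothesis $x_n\rightharpoonup x$ in $\mathcal{Y}$ passes to the subsequence, so $x_{n_k}\rightharpoonup x$ weakly in $\mathcal{Y}$. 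Since weak limits in the (Hausdorff) space $\mathcal{Y}$ are unique, $x=y$, and since $y\in\mathcal{X}$ we conclude $x\in\mathcal{X}$.

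The only step that requires a moment's care is the first one, the passage from boundedness in $\mathcal{X}$ to the existence of a weakly convergent subsequence: this is exactly where reflexivity of $\mathcal{X}$ (a Hilbert space) is used; separability could be invoked to make the weak topology on bounded sets metrizable, but it is not strictly needed. Everything else---boundedness of $\iota$ forcing weak--weak continuity, and uniqueness of weak limits---is routine.
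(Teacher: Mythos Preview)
Your proof is correct and follows essentially the same approach as the paper: extract a weakly convergent subsequence in $\mathcal{X}$ by reflexivity (weak compactness of bounded sets), push weak convergence down to $\mathcal{Y}$ via the continuous embedding (equivalently, $\mathcal{Y}^*\subset\mathcal{X}^*$), and conclude by uniqueness of weak limits in $\mathcal{Y}$. The only difference is cosmetic phrasing.
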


\begin{proof}
Any closed ball in $\mathcal{X}$ is weakly compact and since $(x_n)_{n\in\N}$ is a bounded sequence in $\mathcal{X}$, there exists a subsequence $(x_{n_k})_{k\in\N}$ and $\tilde{x}\in\mathcal{X}$ such that $x_{n_k}\to\tilde{x}$ weakly in $\mathcal{X}$. Therefore $x_{n_k}\to \tilde{x}$ also in the weak topology of $\mathcal{Y}$ because $\mathcal{Y}^*\subset\mathcal{X}^*$ is continuous. By assumption $x_n\to x$ weakly in $\mathcal{Y}$, as $n\to\infty$, so $x=\tilde{x}\in\mathcal{X}$.
\end{proof}

We cite the following version of Gronwall's Lemma \cite{elliott1992}*{Lemma 7.1}.

\begin{lemma}
\label{lemma2:Gronwall}
Let $T>0$, $N\in\N$, $k=T/N$, and $t_n=nk$ for $0\leq n\leq N$. If $\varphi_1,\dots,\varphi_N\geq0$ satisfy for some $M_0,M_1 \ge 0$ and $\mu,\nu>0$ the inequality
\begin{align*}
  \varphi_n \leq
  M_0 \,(1 + t_n^{-1+\mu}) + M_1
  \,k\,\sum_{j=1}^{n-1}t_{n-j}^{-1+\nu}\varphi_j, \quad 1\leq n\leq N,
\end{align*}
then there exists a constant
$M_2=M_2(\mu,\nu,M_1,T)$ such that
\begin{align*}
  \varphi_n\leq M_0M_2\,(1 + t_n^{-1+\mu}), \quad 1\leq n\leq N.
\end{align*}
\end{lemma}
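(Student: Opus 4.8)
The plan is to read the hypothesis as a fixed-point-type inequality $\varphi\le a+\mathcal{K}\varphi$ for nonnegative sequences, where $a_n=M_0\,(1+t_n^{-1+\mu})$ and $\mathcal{K}$ is the monotone, positivity-preserving operator $(\mathcal{K}\psi)_n=M_1\,k\sum_{j=1}^{n-1}t_{n-j}^{-1+\nu}\,\psi_j$. Monotonicity of $\mathcal{K}$ gives, for every $m\in\N$,
\begin{align*}
  \varphi_n\le\sum_{i=0}^{m-1}(\mathcal{K}^i a)_n+(\mathcal{K}^m\varphi)_n,\qquad 1\le n\le N.
\end{align*}
I would choose $m$ so large that the $m$-fold iterated kernel has lost its singularity; the last term is then the right-hand side of a classical \emph{non-singular} discrete Gronwall inequality, while the finite sum $\sum_{i<m}\mathcal{K}^i a$ will be shown to keep the form $C\,(1+t_n^{-1+\mu})$. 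One may assume $M_0>0$, since $M_0=0$ forces $\varphi_n=0$ by induction on $n$.

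Two kernel estimates carry the argument. First, a \emph{singularity-preserving} bound: for $\sigma>0$,
\begin{align*}
  k\sum_{j=1}^{n-1}t_{n-j}^{-1+\nu}\,(1+t_j^{-1+\sigma})\le C(\nu,\sigma,T)\,(1+t_n^{-1+\sigma}),\qquad 1\le n\le N.
\end{align*}
This follows by splitting the sum at $j=\lceil n/2\rceil$: for $j\le n/2$ one has $t_{n-j}\ge t_n/2$, so $t_{n-j}^{-1+\nu}$ is dominated by $t_n^{-1+\nu}$ if $\nu<1$ and by $T^{-1+\nu}$ if $\nu\ge1$, and the remaining factor is summed by the elementary Riemann-sum estimate $k\sum_{i=1}^{m}t_i^{-1+\gamma}\le C(\gamma,T)\,t_m^{\gamma}$, valid for all $\gamma>0$; for $j>n/2$ one instead dominates $t_j^{-1+\sigma}$ by $t_n^{-1+\sigma}$, resp.\ $T^{-1+\sigma}$, and sums $t_{n-j}^{-1+\nu}$. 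Applied with $\sigma=\mu$ and induction on $i$, this yields $(\mathcal{K}^i a)_n\le M_0\,(M_1 C)^i(1+t_n^{-1+\mu})$, hence $\sum_{i=0}^{m-1}(\mathcal{K}^i a)_n\le M_0\,C'\,(1+t_n^{-1+\mu})$. Second, a \emph{singularity-improving} bound for pure powers: for $\alpha,\beta\in(0,1)$,
\begin{align*}
  k\sum_{j=1}^{n-1}t_{n-j}^{-1+\alpha}\,t_j^{-1+\beta}\le C(\alpha,\beta,T)\,t_n^{-1+\alpha+\beta},
\end{align*}
again by splitting at $j=\lceil n/2\rceil$ and using $t_{n-j}\ge t_n/2$, resp.\ $t_j\ge t_n/2$, together with the same Riemann-sum estimate.

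Setting $h^{(1)}_n=t_n^{-1+\nu}$ and $h^{(p+1)}_n=k\sum_{j=1}^{n-1}t_{n-j}^{-1+\nu}h^{(p)}_j$, a shift of the summation index gives $(\mathcal{K}^p\varphi)_n=M_1^p\,k\sum_{l=1}^{n-1}h^{(p)}_{n-l}\varphi_l$, and the singularity-improving bound, applied as long as $p\nu<1$ (the case $\nu\ge1$, where one takes $m=1$, being immediate), gives $h^{(p)}_n\le C_p\,t_n^{-1+p\nu}$. With $m=\lceil1/\nu\rceil$ one has $m\nu\ge1$, so $h^{(m)}_n\le C_m\,T^{-1+m\nu}=:\widetilde C_m$ is bounded, and therefore $(\mathcal{K}^m\varphi)_n\le M_1^m\widetilde C_m\,k\sum_{l=1}^{n-1}\varphi_l$. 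Combining the three displays, $\varphi_n\le M_0\,C'\,(1+t_n^{-1+\mu})+\lambda\,k\sum_{l=1}^{n-1}\varphi_l$ with $\lambda=M_1^m\widetilde C_m$, which is a non-singular discrete Gronwall inequality. The elementary bound $\varphi_n\le b_n+\lambda\,k\sum_{l=1}^{n-1}(1+\lambda k)^{\,n-1-l}b_l$ (for $b_n=M_0\,C'\,(1+t_n^{-1+\mu})$), proved by induction, together with $(1+\lambda k)^{\,n-1-l}\le e^{\lambda T}$ and $k\sum_{l=1}^{n-1}b_l\le M_0\,C''$ (since $k\sum_{l<n}t_l^{-1+\mu}$ is bounded in terms of $\mu,T$ only), gives $\varphi_n\le M_0\,M_2\,(1+t_n^{-1+\mu})$ with $M_2=M_2(\mu,\nu,M_1,T)$.

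The hard part will be the second kernel estimate together with the realization that one cannot close the argument by a single iteration, nor by summing the full Neumann series $\sum_i\mathcal{K}^i a$ directly: each convolution multiplies the constant by $M_1 C$, which may exceed $1$, so that series need not converge. The remedy is exactly the truncation at $m=\lceil1/\nu\rceil$, after which the exponent $-1+p\nu$ has turned nonnegative and the iterated kernel is bounded --- equivalently, the iterated kernels carry a Mittag--Leffler-type factor of order $\Gamma(\nu)^p/\Gamma(p\nu)$, which decays super-exponentially in $p$. A secondary nuisance is that the kernel estimates, split both by $\nu<1$ versus $\nu\ge1$ and by the two halves of the summation range, must be made uniform in $n$ and $k$, including the regime where $n$ is small and $t_n$ is comparable to $k$.
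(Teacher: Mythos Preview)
The paper does not prove this lemma; it merely cites it from \cite{elliott1992}*{Lemma~7.1}. Your argument is correct and is in fact essentially the standard proof of such singular discrete Gronwall inequalities (and the one given in the cited reference): iterate the convolution operator $\mathcal{K}$ finitely many times until the exponent $-1+p\nu$ becomes nonnegative, so that the iterated kernel is bounded, and then close with an ordinary (non-singular) discrete Gronwall step. Your two kernel estimates---the singularity-preserving bound for $\mathcal{K}^i a$ and the singularity-improving Beta-type bound for the pure powers---are exactly what is needed, and your handling of the edge cases ($\nu\ge1$, small $n$, the choice $m=\lceil 1/\nu\rceil$ ensuring $(m-1)\nu<1\le m\nu$) is sound.

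One minor remark: in the final step you bound $k\sum_{l=1}^{n-1}b_l$ by a constant depending only on $\mu,T$ and then absorb it into $M_0 M_2(1+t_n^{-1+\mu})$ via the ``$1$'' in the bracket; this is fine, but it is worth saying explicitly that the resulting $M_2$ is independent of $N$ (equivalently of $k$), since all Riemann-sum constants $C(\gamma,T)$ you use, as well as $e^{\lambda T}$, depend only on the stated parameters.
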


\subsection{The Wiener integral and Malliavin calculus}
\label{sec2:2}
Let
$
  (
  \Omega,
  \mathcal{F},
  (\mathcal{F}_t)_{t\in[0,T]},
  \mathbf{P}
  ),
$
be a filtered probability space, with Bochner spaces $L^p(\Omega;V)=L^p((\Omega,\mathcal{F},\mathbf{P});V)$, $p\in[1,\infty]$, $V$ being a Banach space. In the case $V=\R$ we write $L^p(\Omega)=L^p(\Omega;\R)$. Recall that $Q\in\LB(H)$ is a linear, self-adjoint and positive semidefinite operator. Let $H_0=Q^\frac12(H)$ be the Hilbert space endowed
with inner product
$
  \langle
    u,v
  \rangle_{H_0}
  =
  \langle
    Q^{-\frac12}u
    ,
    Q^{-\frac12}v
  \rangle
$,
where $Q^{-\frac12}$ denotes the pseudoinverse of $Q^\frac12$ if it is not injective. By $\LB_2^0=\LB_2(H_0;H)$ we denote the space of Hilbert-Schmidt operators $H_0\rightarrow H$. Let $W$ be a cylindrical $Q$-Wiener process on
$
  (
  \Omega,
  \mathcal{F},
  (\mathcal{F}_t)_{t\in[0,T]},
  \mathbf{P}
  )
$,
i.e.,
$W\in \LB(H_0;\C(0,T;L^2(\Omega)))$ and $((Wu)_t)_{t\in[0,T]}$ is an $(\mathcal{F}_t)_{t\in[0,T]}$-adapted real-valued Brownian motion for every $u\in H_0$ with
\begin{align*}
  \E
  \big[
    (Wu)_s
    \,
    (Wv)_t
  \big]
  =
  \min(s,t)
  \langle
    u,v
  \rangle_{H_0},
  \quad
  u,v\in H_0,
  \
  s,t\in[0,T].
\end{align*}

The stochastic Wiener integral
\begin{align*}
\int_0^T\Phi_t\diff{W_t},\quad \Phi\in L^2(0,T;\LB_2^0),
\end{align*}
is a random variable in $L^p(\Omega;H)$, $p\in[2,\infty)$. It can be
defined in various ways and its basic properties are not hard to
derive, we refer to \cite{daprato1992, roeckner2007, UMD}. We cite the
following consequence of the Burkholder inequality
\cite{daprato1992}*{Lemma 7.2}, for deterministic integrands and
$p\geq2$,
\begin{align}
\label{ineq:Burkholder}
  \Big\|
    \int_0^T\Phi_t\diff{W_t}
  \Big\|_{L^p(\Omega;H)}
  \leq
  \frac{p(p-1)}2
  \big\|
    \Phi
  \big\|_{L^2(0,T;\LB_2^0)},
  \quad
  \Phi\in L^2(0,T;\LB_2^0).
\end{align}
By taking $H=\R$ and noting the isomorphisms $H_0\cong H_0^*\cong\LB_2(H_0;\R)$ we see that a function $\phi\in L^2(0,T;H_0)$ defines an integrand in $L^2(0,T;\LB_2(H_0;\R))$ for the stochastic integral and the integral $\int_0^T\phi_t\diff{W_t}\in L^2(\Omega)$ is real-valued. As $L^p(0,T;H_0)\subset L^2(0,T;H_0)$ for $p\geq2$ the stochastic integral is well defined for $\phi\in L^p(0,T;H_0)$.

We now recall some concepts from Malliavin calculus introduced in \cite{AnderssonKruseLarsson}. For $q\in[2,\infty]$ let $\mathcal{S}^q(\R)$ be the class of smooth cylindrical random variables of the form
\begin{align*}
& F
  =
  f
  \Big(
    \int_0^T
        \phi_{1,s}
    \diff{W_s}
    ,
    \dots
    ,
    \int_0^T
        \phi_{n,s}
    \diff{W_s}
  \Big),\\
& f\in\Cp^\infty(\R^n;\R),
  \ (\phi_k)_{k=1}^n\subset L^q(0,T;H_0),
  \ n\in\N.
\end{align*}
For $F\in\mathcal{S}^q(\R)$ with the above representation, we define the Malliavin derivative
\begin{align*}
& \big(
    D_tF
  \big)_{t\in[0,T]}
  =
  \Bigg(
    \sum_{j=1}^n
    \partial_j f
    \Big(
      \int_0^T
          \phi_{1,s}
      \diff{W_s}
      ,
      \dots
      ,
      \int_0^T
          \phi_{n,s}
      \diff{W_s}
    \Big)\otimes\phi_{j,t},
  \Bigg)_{t\in[0,T]}.
\end{align*}
Let $V$ be a separable Hilbert space. We define $\mathcal{S}^q(V)$ to be the space of all $V$-valued random variables of the form $Y=\sum_{i=1}^m v_i\otimes F_i$ with $(v_i)_{i=1}^m\subset V$, $(F_i)_{i=1}^m\subset \mathcal{S}^q(\R)$, $m\in\N$. The Malliavin derivative of $Y\in\mathcal{S}^q(V)$ of the above form is given by $D_tY=\sum_{i=1}^m v_i\otimes D_tF_i$. As $(D_tF_i)_{t\in[0,T]}$ is an $H_0$-valued process, $(D_tY)_{t\in[0,T]}$ is a $V\otimes H_0=\LB_2(H_0;V)$-valued process.

For $p\in[2,\infty)$, $q\in[2,\infty]$,
$\mathcal{S}^q(V)\subset L^p(\Omega;V)$ is dense by \cite{AnderssonKruseLarsson}*{Lemma
3.1} and the operator
$D\colon \mathcal{S}^q(V)\rightarrow
L^p(\Omega;L^q(0,T;\LB_2(H_0;V)))$
is closable by \cite{AnderssonKruseLarsson}*{Lemma 3.2}. Let
$\mathbf{M}^{1,p,q}(V)$ denote the closure of $\mathcal{S}^q(V)$ with
respect to the norm
\begin{align*}
  \|Y\|_{\mathbf{M}^{1,p,q}(V)}=\Big(\|Y\|_{L^p(\Omega;V)}^p
  +\|DY\|_{L^p(\Omega;L^q(0,T;\LB_2(H_0;V)))}^p\Big)^\frac1p.
\end{align*}
We also use the corresponding seminorm
$|Y|_{\mathbf{M}^{1,p,q}(V)}=\|DY\|_{L^p(\Omega;L^q(0,T;\LB_2(H_0;V)))}$.
The spaces $\mathbf{M}^{1,p,q}(V)$ are Banach spaces, densely embedded
into $L^2(\Omega;V)$. Thus,
$\mathbf{M}^{1,p,q}(V)\subset L^2(\Omega;V)\subset
\mathbf{M}^{1,p,q}(V)^*$
is a Gelfand triple. By \cite{AnderssonKruseLarsson}*{Theorem 3.5} the
following inequality holds for $p\in[2,\infty)$, $q\in[2,\infty]$ with
$\tfrac1q+\tfrac1{q'}=1$:
\begin{align}
\label{ineq:BurkholderDual}
  \Big\|
    \int_0^T\Phi_t\diff{W_t}
  \Big\|_{\mathbf{M}^{1,p,q}(V)^*}
  \leq
  \big\|
    \Phi
  \big\|_{L^{q'}(0,T;\LB_2(H_0;V))}
  ,
  \quad
  \Phi\in L^2(0,T;\LB_2(H_0;V)).
\end{align}
What makes this duality theory useful is the possibility of taking
$q'$ close to $1$, c.f., \eqref{ineq:Burkholder} where the exponent is
$2$. We only need \eqref{ineq:Burkholder} and
\eqref{ineq:BurkholderDual} for deterministic integrands but remark
that \cite{AnderssonKruseLarsson}*{Theorem 3.5} allows $\Phi$ to be
random and only Skorohod integrability is required. Following
\cite{AnderssonKruseLarsson} we refer to $\mathbf{M}^{1,p,q}(H)$ for
$q>2$ as refined Sobolev-Malliavin spaces. The spaces
$\mathbf{M}^{1,p,2}(V)$ are classical Sobolev-Malliavin spaces, often
denoted $\mathbf{D}^{1,p}(V)$. For $p=q$ we write $\mathbf{M}^{1,p}(V):=\mathbf{M}^{1,p,p}(V)$.

    We next state a modified version of
    \cite{AnderssonKruseLarsson}*{Lemma~3.10}. It provides a local
    Lipschitz bound that enables us to prove an error estimate in the
    $\mathbf{M}^{1,p}(H)^*$-norm by a Gronwall argument in Lemma
    \ref{thm:strong2} below. More precisely,
    \cite{AnderssonKruseLarsson}*{Lemma~3.10} is a local Lipschitz
    bound from $\mathbf{G}^{1,p}(U)^*$ to $\mathbf{G}^{1,p}(V)^*$ for
    mappings $\sigma\in\Cb^2(U;V)$, where
    $\mathbf{G}^{1,p}(U)=\mathbf{M}^{1,p}(U)\cap L^{2p}(\Omega;U)$.
    The Lipschitz constant depends on the
    $\mathbf{M}^{1,2p,p}(U)$-norms of the random variables.  By
    restriction to random variables in $\mathbf{M}^{1,p}(U)$ with
    Malliavin derivative bounded over $\Omega$,
    Lemma~\ref{lemma:Lipschitz} provides a more natural bound,
    obviating the need for the spaces $\mathbf{G}^{1,p}(V)$. The
    Lipschitz constant now depends on the
    $\mathbf{M}^{1,\infty,p}(U)$-seminorm. It is proved in the same
    way as \cite{AnderssonKruseLarsson}*{Lemma~3.10}, by application
    of a modified version of \cite{AnderssonKruseLarsson}*{Lemma~3.8},
    based on H\"older's inequality with exponents $1$, $\infty$
    instead of $2$, $2$. We omit the details.  In the following
    Lemma~\ref{lemma:operator_norm} we cite parts of
    \cite{AnderssonKruseLarsson}*{Lemma~3.9}.

\begin{lemma}
\label{lemma:Lipschitz}
Let $U,V$ be separable Hilbert spaces, $\sigma\in\Cb^2(U;V)$, and
$p\in[2,\infty)$. For $Y^1,Y^2\in\mathbf{M}^{1,p}(U)$ with  
$DY^1,DY^2\in L^{\infty}(\Omega;L^p(0,T;\LB(H_0;U)))$, it holds that 
\begin{align*}
  \big\|
    \sigma(Y^1)-\sigma(Y^2)
  \big\|_{\mathbf{M}^{1,p}(V)^*}
& \leq
  \max
  \big(
    |\sigma|_{\Cb^1(U;V)},
    |\sigma|_{\Cb^2(U;V)}
  \big)\\
& \quad
  \times
  \Big(1+
    \sum_{i=1}^2
    \big|
      Y^i
    \big|_{\mathbf{M}^{1,\infty,p}(U)}
  \Big)
  \big\|
    Y^1-Y^2
  \big\|_{\mathbf{M}^{1,p}(U)^*}.
\end{align*}
\end{lemma}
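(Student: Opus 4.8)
The plan is to reduce the $\mathbf{M}^{1,p}(V)^*$-norm estimate for $\sigma(Y^1)-\sigma(Y^2)$ to a pairing argument against test random variables in $\mathbf{M}^{1,p'}(V)$ (or rather $\mathcal{S}^{q}(V)$, which is dense), using the Taylor-type representation \eqref{eq:Taylor}. Since the statement is declared to be proved ``in the same way as'' \cite{AnderssonKruseLarsson}*{Lemma~3.10}, I would first isolate the single ingredient that changes: a modified version of \cite{AnderssonKruseLarsson}*{Lemma~3.8} in which the product of two Malliavin-differentiable factors is estimated via H\"older's inequality with the conjugate pair $(1,\infty)$ rather than $(2,2)$. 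Concretely, writing $\sigma(Y^1)-\sigma(Y^2)=\int_0^1\sigma'\big(Y^2+\lambda(Y^1-Y^2)\big)\cdot(Y^1-Y^2)\diff{\lambda}$ via \eqref{eq:Taylor} applied componentwise, the integrand is (for each $\lambda$) a product of a bounded-derivative random field $\Sigma_\lambda:=\sigma'(Y^2+\lambda(Y^1-Y^2))\in\Cb^1$ evaluated along the path and the ``small'' factor $Y^1-Y^2$.

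The key steps, in order, would be: (i) By the Taylor formula \eqref{eq:Taylor} and linearity of the $\mathbf{M}^{1,p}(V)^*$-norm under the Bochner integral over $\lambda\in[0,1]$, reduce to bounding $\big\|\Sigma_\lambda\cdot(Y^1-Y^2)\big\|_{\mathbf{M}^{1,p}(V)^*}$ uniformly in $\lambda$. (ii) Apply the modified product rule (the $(1,\infty)$-H\"older version of \cite{AnderssonKruseLarsson}*{Lemma~3.8}): for a ``multiplier'' $\Sigma$ with $\Sigma\in L^\infty(\Omega;\LB(U;V))$ and $D\Sigma\in L^\infty(\Omega;L^p(0,T;\LB(H_0;\LB(U;V))))$, and $Z\in\mathbf{M}^{1,p}(U)^*$, one gets $\|\Sigma Z\|_{\mathbf{M}^{1,p}(V)^*}\le\big(\|\Sigma\|_{L^\infty(\Omega;\LB(U;V))}+\|D\Sigma\|_{L^\infty(\Omega;L^p(0,T;\LB(H_0;\LB(U;V))))}\big)\|Z\|_{\mathbf{M}^{1,p}(U)^*}$; here the point is that the dual pairing only ever sees $\Sigma$ and $D\Sigma$ against a $p'$-integrable test function and a single $Z$-factor in the dual norm, so H\"older with exponents $1,\infty$ is exactly what is needed. (iii) Estimate the two norms of $\Sigma_\lambda$ by the chain rule: $\|\Sigma_\lambda\|_{L^\infty(\Omega;\LB(U;V))}\le|\sigma|_{\Cb^1(U;V)}$ directly, and $D_t\Sigma_\lambda=\sigma''(Y^2+\lambda(Y^1-Y^2))\cdot\big(\lambda D_tY^1+(1-\lambda)D_tY^2\big)$, whence $\|D\Sigma_\lambda\|_{L^\infty(\Omega;L^p(0,T;\cdots))}\le|\sigma|_{\Cb^2(U;V)}\,\big(\lambda|Y^1|_{\mathbf{M}^{1,\infty,p}(U)}+(1-\lambda)|Y^2|_{\mathbf{M}^{1,\infty,p}(U)}\big)$, which is bounded by $|\sigma|_{\Cb^2(U;V)}\sum_{i=1}^2|Y^i|_{\mathbf{M}^{1,\infty,p}(U)}$ uniformly in $\lambda\in[0,1]$. (iv) Combine: the prefactor is at most $\max\big(|\sigma|_{\Cb^1},|\sigma|_{\Cb^2}\big)\big(1+\sum_{i=1}^2|Y^i|_{\mathbf{M}^{1,\infty,p}(U)}\big)$, multiplying $\|Y^1-Y^2\|_{\mathbf{M}^{1,p}(U)^*}$, which is the claimed inequality.

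A few technical points deserve care. One must first make sense of $\sigma'$ along the random path $Y^2+\lambda(Y^1-Y^2)$ and of its Malliavin derivative: since $\sigma\in\Cb^2(U;V)$ the composition rules and the required strong measurability follow from the strong continuity built into $\mathcal{G}^2(U;V)$, exactly as in \cite{AnderssonKruseLarsson}. Second, the product rule in step (ii) is stated there for nice (smooth cylindrical) random variables and then extended by density/closability; here the hypotheses $DY^i\in L^\infty(\Omega;L^p(0,T;\LB(H_0;U)))$ ensure the multiplier $\Sigma_\lambda$ genuinely lies in the admissible class so that the extension is legitimate. Third, one should note that $\|\cdot\|_{\LB_2(H_0;\cdot)}$ versus $\|\cdot\|_{\LB(H_0;\cdot)}$ appears: on $H_0$, which may be infinite dimensional, $\LB\neq\LB_2$, but the Malliavin-derivative objects live in Hilbert--Schmidt spaces and the multiplier acts by composition, for which the operator norm suffices — this bookkeeping is the one place the ``$1,\infty$ instead of $2,2$'' substitution has to be tracked carefully. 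The main obstacle, then, is not any deep estimate but verifying that this modified H\"older pairing is compatible with all the operator-space identifications, i.e., writing down the analogue of \cite{AnderssonKruseLarsson}*{Lemma~3.8} correctly; once that is in place the proof is a routine Taylor-plus-chain-rule computation, which is why the authors are content to omit the details.
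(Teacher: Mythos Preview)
Your proposal is correct and follows essentially the same approach as the paper: the authors explicitly say the proof proceeds as in \cite{AnderssonKruseLarsson}*{Lemma~3.10} via the Taylor representation \eqref{eq:Taylor} combined with a modified version of \cite{AnderssonKruseLarsson}*{Lemma~3.8} in which H\"older's inequality is applied with exponents $1,\infty$ rather than $2,2$, and they omit the details. Your outline of steps (i)--(iv), including the chain-rule bound on $D\Sigma_\lambda$ and the resulting constant, matches this exactly.
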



\begin{lemma}\label{lemma:operator_norm}
Let $p\in[2,\infty)$, $q\in[2,\infty]$.
Then for all $S\in \LB(H)$, $Y\in L^2(\Omega;H)$ it holds that
$
  \|
    S Y
  \|_{\mathbf{M}^{1,p,q}(H)^*}
  \leq
  \|S\|_{\LB(H)}
  \|Y\|_{\mathbf{M}^{1,p,q}(H)^*}.
$
\end{lemma}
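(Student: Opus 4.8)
The plan is to reduce the claim to the statement that post-composition by the adjoint $S^*$ is a bounded operator on the Malliavin--Sobolev space $\mathbf{M}^{1,p,q}(H)$ of norm at most $\|S\|_{\LB(H)}$, and then to transpose. First, since $SY\in L^2(\Omega;H)$, the Gelfand triple structure $\mathbf{M}^{1,p,q}(H)\subset L^2(\Omega;H)\subset\mathbf{M}^{1,p,q}(H)^*$ identifies $SY$ with the functional $Z\mapsto\E[\langle SY,Z\rangle]$ on $\mathbf{M}^{1,p,q}(H)$, and since $\mathcal{S}^q(H)$ is dense in $\mathbf{M}^{1,p,q}(H)$,
\begin{align*}
  \|SY\|_{\mathbf{M}^{1,p,q}(H)^*}
  =
  \sup\big\{
    \big|\E[\langle SY,Z\rangle]\big|
    :
    Z\in\mathcal{S}^q(H),\ \|Z\|_{\mathbf{M}^{1,p,q}(H)}\leq1
  \big\}.
\end{align*}
For such $Z$ I would rewrite $\E[\langle SY,Z\rangle]=\E[\langle Y,S^*Z\rangle]$, where $S^*\in\LB(H)$ is the Hilbert space adjoint of $S$, and use once more that $Y\in L^2(\Omega;H)$ to pair $Y$ against $S^*Z$ through the Gelfand triple.

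The second step is to verify that $S^*$ maps $\mathcal{S}^q(H)$ into itself and is bounded by $\|S\|_{\LB(H)}$ in the $\mathbf{M}^{1,p,q}(H)$-norm. Indeed, if $Z=\sum_{i=1}^m v_i\otimes F_i\in\mathcal{S}^q(H)$ then $S^*Z=\sum_{i=1}^m (S^*v_i)\otimes F_i\in\mathcal{S}^q(H)$ and $D_t(S^*Z)=\sum_{i=1}^m (S^*v_i)\otimes D_tF_i=S^*\circ(D_tZ)$, i.e.\ $D(S^*Z)$ is obtained from $DZ$ by post-composition of the Hilbert--Schmidt operator $D_tZ\in\LB_2(H_0;H)$ with $S^*$. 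Since $\|S^*\circ T\|_{\LB_2(H_0;H)}\leq\|S^*\|_{\LB(H)}\|T\|_{\LB_2(H_0;H)}$ for every $T\in\LB_2(H_0;H)$ (apply the definition of the Hilbert--Schmidt norm termwise to an ON-basis of $H_0$), and $\|S^*v\|_H\leq\|S^*\|_{\LB(H)}\|v\|_H$, raising to the $p$-th power, integrating over $\Omega$, and taking the $L^q(0,T;\cdot)$-norm in $t$ (the essential supremum when $q=\infty$, which likewise commutes with post-composition by a bounded operator) gives
\begin{align*}
  \|S^*Z\|_{\mathbf{M}^{1,p,q}(H)}
  \leq
  \|S^*\|_{\LB(H)}\|Z\|_{\mathbf{M}^{1,p,q}(H)}
  =
  \|S\|_{\LB(H)}\|Z\|_{\mathbf{M}^{1,p,q}(H)}.
\end{align*}

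Combining the two steps, for $Z\in\mathcal{S}^q(H)$ with $\|Z\|_{\mathbf{M}^{1,p,q}(H)}\leq1$ one has
$|\E[\langle Y,S^*Z\rangle]|\leq\|Y\|_{\mathbf{M}^{1,p,q}(H)^*}\|S^*Z\|_{\mathbf{M}^{1,p,q}(H)}\leq\|S\|_{\LB(H)}\|Y\|_{\mathbf{M}^{1,p,q}(H)^*}$, and taking the supremum over such $Z$ yields the asserted inequality. This is in essence the argument of \cite{AnderssonKruseLarsson}*{Lemma~3.9}, and there is no serious obstacle: the only points needing care are that $S$ is not assumed self-adjoint, so the adjoint $S^*$ genuinely appears when transposing the duality pairing, and the elementary compatibility of post-composition by a bounded operator with both the Hilbert--Schmidt structure of $\LB_2(H_0;H)$ and the Malliavin derivative on cylindrical random variables.
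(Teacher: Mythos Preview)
Your argument is correct and is precisely the duality/transposition proof one expects; the paper does not supply its own proof but simply cites \cite{AnderssonKruseLarsson}*{Lemma~3.9}, which you have essentially reproduced. There is nothing to add.
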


\section{Existence, uniqueness and regularity}
\label{sec3}
Throughout this section we assume that \eqref{as:S}, \eqref{as:Q}--\eqref{as:x0} hold with $\rho\in[1,2)$, $\beta\in(0,1/\rho]$.
We begin by proving existence, uniqueness, and Malliavin regularity of the solution of \eqref{eq:SVIE}. Recall that two stochastic processes $X^1,X^2$ are modifications of each other if for all $t\in[0,T]$ it holds that $\mathbf{P}(X_t^1\neq X_t^2)=0$.
\begin{proposition}
\label{prop:existence}
There exists an, up to modification, unique stochastic process $X\colon[0,T]\times\Omega\to H$ such that $X\in\C(0,T;L^p(\Omega;H))$ for $p\in[2,\infty)$ and such that $X\in \C(0,T;\mathbf{M}^{1,p,q}(H))$ for $p\in[2,\infty)$, $q\in[2,\tfrac2{1-\rho\beta})$ and which satisfies equation \eqref{eq:SVIE} $\mathbf{P}$-a.s..
\end{proposition}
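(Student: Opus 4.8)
The plan is to obtain existence and uniqueness of $X$ in $\C(0,T;L^p(\Omega;H))$ by a contraction argument exploiting that \eqref{as:F} makes $F$ globally Lipschitz, and then, in a separate step, to promote $X$ to a process in $\C(0,T;\mathbf{M}^{1,p,q}(H))$ by passing to a weak limit in the Picard iterates. Since $(S_t)_{t\in[0,T]}$ is not a semigroup, all temporal regularity has to be read off directly from \eqref{as:S}. I would first record the two linear building blocks. Put $\mathcal{W}_t=\int_0^tS_{t-s}\diff{W_s}$. Writing $S_{t-r}Q^{\frac12}=\bigl(S_{t-r}A^{\frac{1-\rho\beta}{2\rho}}\bigr)\bigl(A^{\frac{\beta\rho-1}{2\rho}}Q^{\frac12}\bigr)$ and combining \eqref{as:S} (with exponent $s=\tfrac{1-\rho\beta}{2}\in[0,\tfrac12)$) with \eqref{as:Q} gives the pointwise bound $\|S_{t-r}\|_{\LB_2^0}\lesssim(t-r)^{-(1-\rho\beta)/2}$. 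Hence $r\mapsto S_{t-r}\in L^2(0,t;\LB_2^0)$, so $\mathcal{W}_t\in L^p(\Omega;H)$ by \eqref{ineq:Burkholder}; and $r\mapsto S_{t-r}\in L^q(0,t;\LB_2^0)$ \emph{exactly} when $q<\tfrac{2}{1-\rho\beta}$ — this is the source of the restriction on $q$ in the statement. Combined with the fact that the Wiener integral of a deterministic $\Phi\in L^q(0,T;\LB_2^0)$ lies in $\mathbf{M}^{1,p,q}(H)$ with $D_r\int_0^T\Phi_t\diff{W_t}=\Phi_r$ (approximate $\Phi$ by step functions and use closedness of $D$), this shows, for $q$ in the stated range, that $\mathcal{W}_t\in\mathbf{M}^{1,p,q}(H)$ with $D_r\mathcal{W}_t=S_{t-r}\one_{[0,t)}(r)$ and $\sup_{t\in[0,T]}\|D\mathcal{W}_t\|_{L^q(0,T;\LB_2^0)}<\infty$. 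For the initial datum, from $S_tx_0-x_0=\int_0^t\dot S_ux_0\diff{u}$ and $\dot S_ux_0=(\dot S_uA^{-\frac1\rho})(A^{\frac1\rho}x_0)$, assumption \eqref{as:S} with $s=0$ yields $\|\dot S_ux_0\|\le L_0\|A^{\frac1\rho}x_0\|$ uniformly in $u$ (here $x_0\in\dot{H}^3$ is far more than enough, since $\tfrac1\rho\le1$), whence $t\mapsto S_tx_0\in\C(0,T;H)$.

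\emph{Temporal continuity of $\mathcal{W}$ — the technical core.} For $0\le t'<t\le T$, split $\mathcal{W}_t-\mathcal{W}_{t'}=\int_0^{t'}(S_{t-s}-S_{t'-s})\diff{W_s}+\int_{t'}^tS_{t-s}\diff{W_s}$ and estimate the $L^p(\Omega;H)$-norm (and, for the Malliavin part, the $L^q(0,T;\LB_2^0)$-norm of the corresponding kernel difference) of each term via \eqref{ineq:Burkholder}. The second term is controlled by the pointwise bound above. For the first, substitute $u=t'-s$; on small $u$ apply the triangle inequality with the pointwise bound, and on $u$ bounded away from $0$ write $S_{t-t'+u}-S_u=\int_u^{u+(t-t')}\dot S_v\diff{v}$ and insert $\|\dot S_vQ^{\frac12}\|_{\LB_2}=\bigl\|(\dot S_vA^{\frac{1-\rho\beta}{2\rho}})(A^{\frac{\beta\rho-1}{2\rho}}Q^{\frac12})\bigr\|_{\LB_2}\lesssim v^{-(3-\rho\beta)/2}$ from \eqref{as:S}. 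A short computation then gives $\|\mathcal{W}_t-\mathcal{W}_{t'}\|_{L^p(\Omega;H)}\lesssim(t-t')^{\rho\beta/2}$, and the analogous estimate for the Malliavin seminorm is finite and tends to $0$ precisely on the range $q<\tfrac2{1-\rho\beta}$; hence $\mathcal{W}\in\C(0,T;\mathbf{M}^{1,p,q}(H))$. Controlling the singularity of $\dot S$ without a semigroup law is where most of the bookkeeping lies.

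\emph{Fixed point.} By \eqref{as:F}, $F$ is Lipschitz with constant $|F|_{\Cb^1(H;H)}$, and $\|S_t\|_{\LB(H)}\le L_0$ by \eqref{as:S} with $s=0$. Thus $\Lambda(Y)_t:=S_tx_0+\int_0^tS_{t-s}F(Y_s)\diff{s}+\mathcal{W}_t$ maps $\C(0,T;L^p(\Omega;H))$ into itself — continuity of $t\mapsto\int_0^tS_{t-s}F(Y_s)\diff{s}$ follows from $\|S_t\|_{\LB(H)}\le L_0$, the linear growth of $F$, and $\int_0^{t'}\|S_{t-s}-S_{t'-s}\|_{\LB(H)}\diff{s}\to0$ as $t\to t'$, the latter obtained from $\|\dot S_v\|_{\LB(H)}\le L_1v^{-1}$ (\eqref{as:S} with $s=1$) by splitting the integral at the scale where the two bounds $2L_0$ and $L_1(t-t')v^{-1}$ balance — and it satisfies $\|\Lambda(Y^1)_t-\Lambda(Y^2)_t\|_{L^p(\Omega;H)}\le L_0|F|_{\Cb^1(H;H)}\int_0^t\|Y^1_s-Y^2_s\|_{L^p(\Omega;H)}\diff{s}$. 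Iterating, some power of $\Lambda$ is a strict contraction, so it has a unique fixed point $X\in\C(0,T;L^p(\Omega;H))$, simultaneously for every $p\in[2,\infty)$; this is the solution of \eqref{eq:SVIE}, unique up to modification.

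\emph{Malliavin regularity by a uniform bound and weak compactness.} Run the Picard scheme $X^{n+1}=\Lambda(X^n)$ from $X^0_t=S_tx_0+\mathcal{W}_t$. By the first step and induction each $X^n\in\C(0,T;\mathbf{M}^{1,p,q}(H))$, with
\begin{equation*}
  D_rX^{n+1}_t=S_{t-r}\one_{[0,t)}(r)+\int_r^tS_{t-s}F'(X^n_s)D_rX^n_s\diff{s},
\end{equation*}
using the chain rule $D[F(Y)]=F'(Y)DY$, valid for $F\in\Cb^1(H;H)$. Bounding $F'$ by $|F|_{\Cb^1(H;H)}$ in $\LB(H)$, then taking the $L^q(0,T)$-norm in $r$ (Minkowski's inequality) and the $L^p(\Omega)$-norm, and writing $g_n(t)=\|DX^n_t\|_{L^p(\Omega;L^q(0,T;\LB_2^0))}$, one obtains $g_{n+1}(t)\le C_0+L_0|F|_{\Cb^1(H;H)}\int_0^tg_n(s)\diff{s}$ with $C_0=\sup_t\|D\mathcal{W}_t\|_{L^q(0,T;\LB_2^0)}<\infty$; by induction $\sup_n\sup_{t\in[0,T]}g_n(t)<\infty$, and together with the uniform $L^p(\Omega;H)$-bound on the iterates this gives $\sup_n\|X^n_t\|_{\mathbf{M}^{1,p,q}(H)}<\infty$ for each fixed $t$. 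Since $X^n_t\to X_t$ in $L^p(\Omega;H)$, hence weakly in $L^2(\Omega;H)$, and $\mathbf{M}^{1,p,q}(H)$ embeds continuously into $L^2(\Omega;H)$ and is reflexive for $p,q\in(1,\infty)$, the argument proving Lemma~\ref{lemma:LimitReg} applies and yields $X_t\in\mathbf{M}^{1,p,q}(H)$, with the bound uniform in $t$. Continuity $t\mapsto X_t$ in $\mathbf{M}^{1,p,q}(H)$ then follows by estimating $X_t-X_{t'}$ through \eqref{eq:SVIE}, using the continuity of $\mathcal{W}$ from the second step, the bounds from the third, and $\sup_s\|DX_s\|_{L^p(\Omega;L^q(0,T;\LB_2^0))}<\infty$. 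The point to stress is that one cannot carry out the fixed point directly in $\mathbf{M}^{1,p,q}(H)$: the natural Lipschitz bound for the nonlinearity there would require $F\in\Cb^2(H;H)$, which \eqref{as:F} does not furnish (only $F\in\Cb^2(H;\dot{H}^{-\delta})$) — this is why the Malliavin regularity has to be obtained a posteriori, via uniform bounds and weak limits. The two anticipated obstacles are therefore the semigroup-free temporal regularity of $\mathcal{W}$ in the second step and this a posteriori Malliavin argument in the fourth.
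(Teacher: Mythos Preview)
Your fixed-point argument for existence and uniqueness in $\C(0,T;L^p(\Omega;H))$ matches the paper's; the paper also just cites Banach's theorem and notes that the semigroup property is not needed. The genuine difference is in the Malliavin part. You obtain $X_t\in\mathbf{M}^{1,p,q}(H)$ by propagating uniform Malliavin bounds through the \emph{Picard iterates} and then invoking reflexivity of $\mathbf{M}^{1,p,q}(H)$ (a Banach-space version of Lemma~\ref{lemma:LimitReg}). The paper instead defers this to Proposition~\ref{lemma:Stability3}, where the approximating sequence is the \emph{fully discrete numerical scheme} $\tilde X^{h,k}$: boundedness in $\mathbf{M}^{1,p,q}(H)$ comes from Proposition~\ref{lemma:Stability2}, convergence from the strong-error Theorem~\ref{thm:strong}, and Lemma~\ref{lemma:LimitReg} is applied in the Hilbert setting $\mathcal{X}=L^2(0,T;\mathbf{M}^{1,2,2}(H))$; only then is the equation for $D_rX_t$ derived (via \cite{FuhrmanTessitore}*{Lemma~3.6}) and analyzed by Gronwall to reach the full range of $p,q$ and the $|\cdot|_{\mathbf{M}^{1,\infty,q}}$ bound. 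Your route is self-contained and avoids the forward reference to Section~\ref{sec:weak}; the paper's route is economical because the discrete Malliavin bounds and strong convergence are needed anyway for the main weak-error theorem, so the regularity comes essentially for free.

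One point to tighten: your final continuity step in $\mathbf{M}^{1,p,q}(H)$ implicitly uses an explicit formula for $D_rX_t$ (to estimate $D(X_t-X_{t'})$ through \eqref{eq:SVIE}). Weak compactness alone gives $X_t\in\mathbf{M}^{1,p,q}(H)$ but does not immediately identify $DX_t$; you should either pass to the limit in your recursion for $D_rX^{n+1}_t$ (using strong convergence of $F'(X^n_s)\to F'(X_s)$ and closedness of $D$) or invoke a result such as \cite{FuhrmanTessitore}*{Lemma~3.6} to justify differentiating $\int_0^tS_{t-s}F(X_s)\diff{s}$ under the integral, as the paper does.
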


\begin{proof}
  Existence is proved by a standard application of Banach's Fixed
  Point Theorem, see, e.g., \cite{jentzen2010b}*{Theorem 1} or
  \cite{BaeumerGeissertKovacs2014}*{Theorem 3.3}. We note that for
  proving existence and uniqueness in $\C(0,T;L^p(\Omega;H))$ it is
  not crucial whether $(S_t)_{t\in[0,T]}$ is a semigroup or not. For
  the $\C(0,T;\mathbf{M}^{1,p,q}(H))$ regularity, see Proposition
  \ref{lemma:Stability3} below.
\end{proof}

The next proposition states the temporal H\"{o}lder regularity of $X$ in the $L^p(\Omega;H)$ and $\mathbf{M}^{1,p,q}(H)^*$ norms. Note that the H\"{o}lder exponent in the $\mathbf{M}^{1,p,q}(H)^*$ norm is twice that in the $L^p(\Omega;H)$ norm.

\begin{proposition}
\label{prop:Holder}
Let $X$ be the solution to \eqref{eq:SVIE}. For $\gamma\in(0,\beta)$, $p\geq2$, $q=\tfrac2{1-\rho\gamma}$, there exists $C>0$ such that
\begin{align*}
  \big\|
    X_{t_2}-X_{t_1}
  \big\|_{L^p(\Omega;H)}
& \leq
  C
  \big|
    t_2-t_1
  \big|^{\frac{\rho\gamma}2},
  \quad
  t_1,t_2\in[0,T],\\
  \big\|
    X_{t_2}-X_{t_1}
  \big\|_{\mathbf{M}^{1,p,q}(H)^*}
& \leq
  C
  \big|
    t_2-t_1
  \big|^{\rho\gamma},
  \quad
  t_1,t_2\in[0,T].
\end{align*}

\end{proposition}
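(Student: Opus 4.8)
The plan is to decompose $X_{t_2}-X_{t_1}$ (for $t_1<t_2$) using the mild formulation \eqref{eq:SVIE} into three pieces: the deterministic part $(S_{t_2}-S_{t_1})x_0$, the drift term $\int_0^{t_2}S_{t_2-s}F(X_s)\diff{s}-\int_0^{t_1}S_{t_1-s}F(X_s)\diff{s}$, and the stochastic convolution $\int_0^{t_2}S_{t_2-s}\diff{W_s}-\int_0^{t_1}S_{t_1-s}\diff{W_s}$. Each drift and stochastic piece further splits into an "increment of the kernel" term $\int_0^{t_1}(S_{t_2-s}-S_{t_1-s})(\cdot)\diff{s}$ and a "tail" term $\int_{t_1}^{t_2}S_{t_2-s}(\cdot)\diff{s}$. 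For the $L^p(\Omega;H)$ estimate I would bound these using \eqref{ineq:Burkholder} for the stochastic term and Bochner/Minkowski inequalities for the drift term, together with the smoothing and Hölder-in-time properties of $(S_t)$ encoded in \eqref{as:S}. For the $\mathbf{M}^{1,p,q}(H)^*$ estimate I would instead invoke \eqref{ineq:BurkholderDual} with the conjugate exponent $q'=\tfrac2{1+\rho\gamma}$ for the stochastic term, Lemma~\ref{lemma:operator_norm} to move the operators $S_t$ outside the dual norm for the deterministic and drift pieces, and then the fact that $L^p(\Omega;H)\hookrightarrow\mathbf{M}^{1,p,q}(H)^*$ continuously to control the remaining $L^p(\Omega;H)$-type factors (using $X\in\C(0,T;L^p(\Omega;H))$ from Proposition~\ref{prop:existence}, plus $F\in\Cb^1(H;H)$ so $F(X_s)$ is bounded in $L^p(\Omega;H)$).

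The key technical inputs are the following consequences of \eqref{as:S}: a uniform bound $\|S_t\|_{\LB}\le L_0$; a smoothing bound $\|A^{\gamma/2}S_t\|_{\LB}\lesssim t^{-\rho\gamma/2}$ (taking $s=\rho\gamma/\min(1,\cdot)$ appropriately, valid since $\rho\gamma<1$); and, via $S_{t_2-s}-S_{t_1-s}=\int_{t_1-s}^{t_2-s}\dot{S}_r\diff{r}$ together with the bound on $\|A^{(s-1)/\rho}\dot{S}_t\|$, a Hölder-type estimate $\|(S_{t_2-s}-S_{t_1-s})A^{-\gamma/2}\|_{\LB}\lesssim (t_2-t_1)^{\rho\gamma/2}(t_1-s)^{-\rho\gamma/2+\text{something}}$ or, more directly for the purposes here, $\|S_{t_2-s}-S_{t_1-s}\|_{\LB}\lesssim (t_2-t_1)^{\rho\gamma}(t_1-s)^{-\rho\gamma}$ when one is willing to pay a full power. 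I would then plug these into each of the six (three for the increment pieces, three for the tails) terms, checking that the resulting time integrals converge: the stochastic tail contributes $\big(\int_{t_1}^{t_2}\|A^{(\rho\gamma-1)/(2\rho)}S_{t_2-s}Q^{1/2}\|_{\LB_2}^2\diff{s}\big)^{1/2}$-type quantities which, after inserting \eqref{as:Q} and the smoothing of the remaining fractional power, are $\lesssim (t_2-t_1)^{\rho\gamma/2}$ in the $L^p$ case and $\lesssim(t_2-t_1)^{\rho\gamma}$ in the dual case because the exponent $2$ is replaced by $q'<2$.

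The main obstacle is the careful bookkeeping of fractional powers of $A$: one must split the regularity budget $\beta$ (available from \eqref{as:Q}) between the smoothing needed to absorb the singularity of $S_t$ at $t=0$ and the Hölder regularity one wishes to extract, and verify in each term that the exponents in the arising Beta-function integrals $\int_0^{t_1}(t_1-s)^{-a}(t_2-s)^{-b}\diff{s}$ are summable ($a,b<1$ and $a+b$ in the right range) so that the bound $(t_2-t_1)^{\rho\gamma}$ (resp.\ $(t_2-t_1)^{\rho\gamma/2}$) actually comes out. The doubling of the Hölder exponent in the dual norm is precisely the gain from \eqref{ineq:BurkholderDual}: where the $L^2$-in-time norm forces exponent $2$ and hence $\rho\gamma/2$, the $L^{q'}$-in-time norm with $q'\to1$ allows exponent close to $1$ and hence close to $\rho\gamma$; choosing $q=\tfrac2{1-\rho\gamma}$ makes this exact. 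I would also need the elementary deterministic Hölder bound $\|(S_{t_2}-S_{t_1})x_0\|\lesssim(t_2-t_1)\|A x_0\|$ (or a fractional version) from $\dot S$, which is comfortably covered by $x_0\in\dot H^3$ via \eqref{as:x0} and \eqref{as:S}, and contributes the highest power of $(t_2-t_1)$, so it is not the binding constraint.
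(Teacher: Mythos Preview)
Your plan is correct and essentially coincides with the paper's proof: the same five-term decomposition of $X_{t_2}-X_{t_1}$, the same use of \eqref{as:S} via $S_{t_2-s}-S_{t_1-s}=\int_{t_1}^{t_2}\dot S_{t-s}\diff t$, the insertion of $A^{(\beta\rho-1)/(2\rho)}$ from \eqref{as:Q}, and the switch from \eqref{ineq:Burkholder} to \eqref{ineq:BurkholderDual} with $q'=\tfrac{2}{1+\rho\gamma}$ to obtain the doubled H\"older exponent. The paper streamlines the presentation by treating both norms simultaneously through a single family $V_r$ ($V_2=L^p(\Omega;H)$, $V_r=\mathbf{M}^{1,p,r}(H)^*$ for $r>2$) and handles the stochastic increment term by introducing an auxiliary exponent $\eta\in(0,1/\rho)$ to split $(t-s)^{-(3-\beta\rho)/2}\le (t_1-s)^{-(1-(\beta-2\eta)\rho)/2}(t-t_1)^{-1+\eta\rho}$, then choosing $\eta=\gamma$ (dual case) or $\eta=\gamma/2$ ($L^p$ case); this is exactly the ``bookkeeping'' you anticipate.
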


\begin{proof}
Fix $\gamma\in(0,\beta)$, $p\geq2$. In order to treat both cases simultaneously we define $V_2=L^p(\Omega;H)$, $c_{p,2}=p(p-1)/2$, and $V_r=\mathbf{M}^{1,p,r}(H)^*$, $c_{p,r}=1$ for $r\in(2,\infty]$. In view of \eqref{ineq:Burkholder} and \eqref{ineq:BurkholderDual} it holds that
\begin{align}
\label{ineq:Vq}
  \Big\|
    \int_0^T
      \Phi_t
    \diff{W_t}
  \Big\|_{V_r}
  \leq
  c_{p,r}
  \big\|
    \Phi
  \big\|_{L^{r'}(0,T;\LB_2^0)},
  \quad
  \Phi\in L^2(0,T;\LB_2^0),\
  r\in[2,\infty],
\end{align}
where $\tfrac1r+\tfrac1{r'}=1$. Let $t_2>t_1$. The difference $X_{t_2}-X_{t_1}$ can be written in the form
\begin{align*}
  X_{t_2}-X_{t_1}
& =
  \big(
    S_{t_2}-S_{t_1}
  \big)x_0
  +
  \int_0^{t_1}
    \big(
      S_{t_2-s}-S_{t_1-s}
    \big)
    F(X_s)
  \diff{s}
  +
  \int_{t_1}^{t_2}
    S_{t_2-s}F(X_s)
  \diff{s}\\
& \quad
  +
  \int_0^{t_1}
    \big(
      S_{t_2-s}-S_{t_1-s}
    \big)
  \diff{W_s}
  +
  \int_{t_1}^{t_2}
    S_{t_2-s}
  \diff{W_s}.
\end{align*}
Taking $V_r$-norms, using the continuous embeddings
$
  H\subset
  L^p(\Omega;H)\subset
  L^2(\Omega;H)\subset
  \mathbf{M}^{1,p,r}(H)^*
$, yields
\begin{align*}
& \big\|
    X_{t_2}-X_{t_1}
  \big\|_{V_r}
  \leq
  \big\|
    \big(
      S_{t_2}-S_{t_1}
    \big)x_0
  \big\|\\
& \qquad
  +
  \Big\|
    \int_0^{t_1}
      \big(
        S_{t_2-s}-S_{t_1-s}
      \big)
      F(X_s)
    \diff{s}
  \Big\|_{L^p(\Omega;H)}
  +
  \Big\|
    \int_{t_1}^{t_2}
      S_{t_2-s}F(X_s)
    \diff{s}
  \Big\|_{L^p(\Omega;H)}\\
& \qquad
  +
  \Big\|
    \int_0^{t_1}
      \big(
        S_{t_2-s}-S_{t_1-s}
      \big)
    \diff{W_s}
  \Big\|_{V_r}
  +
  \Big\|
    \int_{t_1}^{t_2}
      S_{t_2-s}
    \diff{W_s}
  \Big\|_{V_r}.
\end{align*}
First, by \eqref{as:S} and \eqref{as:x0}, we obtain
\begin{align*}
  \big\|
    \big(
      S_{t_2}-S_{t_1}
    \big)x_0
  \big\|
& =
  \Big\|
    \int_{t_1}^{t_2}
      \dot{S}_t
      A^{-\frac1\rho}
      A^\frac1\rho
      x_0
    \diff{t}
  \Big\|
  \leq
  L_0
  \big\|
    A^{\frac1\rho}x_0
  \big\|
  (t_2-t_1).
\end{align*}
It is straightforward to show that the terms containing $F$ are
bounded up to a constant by $|t_2-t_1|^{1-\epsilon}$, and $|t_2-t_1|$
respectively, for every $\epsilon\in(0,1)$. For the case $\rho=1$ see
the proof of \cite{AnderssonKruseLarsson}*{Proposition 3.11}.

By \eqref{ineq:Vq}, \eqref{as:Q}, and \eqref{as:S} we get
\begin{align*}
& \Big\|
    \int_0^{t_1}
      \big(
        S_{t_2-s}-S_{t_1-s}
      \big)
    \diff{W_s}
  \Big\|_{V_r}\\
& \quad
  \leq
  c_{p,r}
  \Big(
    \int_0^{t_1}
      \big\|
        \big(
          S_{t_2-s}-S_{t_1-s}
        \big)
        A^{\frac{1-\beta\rho}{2\rho}}
      \big\|_{\LB}^{r'}
      \big\|
        A^{\frac{\beta\rho-1}{2\rho}}
      \big\|_{\LB_2^0}^{r'}
    \diff{s}
  \Big)^\frac1{r'}\\
& \quad
  \leq
  c_{p,r}
  \big\|
    A^{\frac{\beta\rho-1}{2\rho}}
  \big\|_{\LB_2^0}
  \Big(
    \int_0^{t_1}
      \Big(
        \int_{t_1}^{t_2}
          \|\dot{S}_{t-s}A^{\frac{(3-\beta\rho)/2-1}\rho}\|_{\LB}
        \diff{t}
      \Big)^{r'}
    \diff{s}
  \Big)^\frac1{r'}\\
& \quad
  \leq
  c_{p,r}
  \big\|
    A^{\frac{\beta\rho-1}{2\rho}}
  \big\|_{\LB_2^0}
  L_{\frac{3-\beta\rho}2}
  \Big(
    \int_0^{t_1}
      \Big(
        \int_{t_1}^{t_2}
          (t-s)^{-\frac{3-\beta\rho}2}
        \diff{t}
      \Big)^{r'}
    \diff{s}
  \Big)^\frac1{r'}.
\end{align*}
Bounding the integrals yields, for $\eta\in(0,1/\rho)$ to be chosen,
\begin{align*}
& \Big(
    \int_0^{t_1}
      \Big(
        \int_{t_1}^{t_2}
          (t-s)^{-\frac{3-\beta\rho}2}
        \diff{t}
      \Big)^{r'}
    \diff{s}
  \Big)^\frac1{r'}\\
& \quad
  \leq
  \Big(
    \int_0^{t_1}
      \Big(
        (t_1-s)^{-\frac{1-(\beta-2\eta)\rho}2}
        \int_{t_1}^{t_2}
          (t-t_1)^{-1+\eta\rho}
        \diff{t}
      \Big)^{r'}
    \diff{s}
  \Big)^\frac1{r'}\\
& \quad
  =
  \frac{
    (t_2-t_1)^{\eta\rho}
  }
  {
    \eta\rho
  }
  \Big(
    \int_0^{t_1}
      (t_1-s)^{-\frac{r}{r-1}\frac{1-(\beta-2\eta)\rho}2}
    \diff{s}
  \Big)^\frac{r-1}{r}.
\end{align*}
For $r=q=2/(1-\gamma\rho)$ and $\eta<(\beta+\gamma)/2$, the exponent is
\begin{align*}
  \frac{r}{r-1}
  \frac{1-(\beta-2\eta)\rho}2
  =
  \frac{1-\beta\rho+2\eta\rho}{1+\rho\gamma}<1.
\end{align*}
In particular, we can take $\eta=\gamma$ as required since $\gamma<\beta$. For $r=2$, the analogous condition is $\eta<\beta/2$ and we can take $\eta=\gamma/2$. Next, similarly,
\begin{align*}
  \Big\|
    \int_{t_1}^{t_2}
      S_{t_2-s}
    \diff{W_s}
  \Big\|_{V_r}
& \leq
  c_{p,r}
  \Big(
    \int_{t_1}^{t_2}
      \big\|
        S_{t_2-s}
        A^{\frac{1-\beta\rho}{2\rho}}
      \big\|_{\LB}^{r'}
      \big\|
        A^{\frac{\beta\rho-1}{2\rho}}
      \big\|_{\LB_2^0}^{r'}
    \diff{s}
  \Big)^\frac1{r'}\\
& \leq
  c_{p,r}
  L_{\frac{1-\beta\rho}{2}}
  \big\|
    A^{\frac{\beta\rho-1}{2\rho}}
  \big\|_{\LB_2^0}^{r'}
  \Big(
    \int_{t_1}^{t_2}
      (t_2-s)^{-\frac{r}{r-1}\frac{1-\beta\rho}2}
    \diff{s}
  \Big)^\frac{r-1}{r}\\
& \leq
  (t_2-t_1)^{\frac{r-1}r-\frac{1-\beta\rho}2}.
\end{align*}
For $r=q=2/(1-\gamma\rho)$ we have the H\"{o}lder exponent
\begin{align*}
  \frac{r-1}r
  -
  \frac{1-\beta\rho}2
  =
  \frac{\rho(\beta+\gamma)}2
  >
  \gamma\rho,
\end{align*}
and for $r=2$ the H\"{o}lder exponents equals $\beta\rho/2>\gamma\rho/2$.
\end{proof}

\section{Weak and strong convergence}
\label{sec:weak}

This section contains our main result and its proof. Theorem
\ref{thm:main} states a weak error estimate for abstractly defined
approximations of quantities of the form
$\E[\Phi(X)]=\E[\prod_{i=1}^K \varphi_i(\int_0^T X_t \diff{\nu_t^i})]$
for $(\nu^i)_{i=1}^K\subset \mathcal{M}_T$,
$(\varphi_i)_{i=1}^K\subset \mathcal{G}_{\mathrm{p}}^{2,m}(H;\R)$,
$m\geq2$, and $X$ being the solution to \eqref{eq:SVIE}. Theorem
\ref{thm:strong} provides a strong error estimate for approximations
of $X$. For parabolic problems, weak convergence, more precisely,
convergence of approximations of $\E[\varphi(X_t)]$ for fixed
$t\in[0,T]$ has been considered \cite{AnderssonKruseLarsson}, and for
Volterra equations in \cite{kovacs2014} but only in the linear case
$F=0$. To the best of our knowledge the more general convergence in
Theorem \ref{thm:main} is new in both cases. The rate of convergence
for $\E[\Phi(X)]$ is twice the strong rate as expected. We begin by
presenting a family of abstractly defined approximations.

\subsection{Approximation}
\label{sec4:1}
Assume that \eqref{as:S}, \eqref{as:Q}--\eqref{as:x0} hold. Let
$(V_h)_{h\in(0,1)}$ be a family of finite-dimensional subspaces of $H$
and let $P_h\colon H\rightarrow V_h$ be the orthogonal projector. Let
$k\in(0,1)$ and $t_n=nk$, $n=0,\dots,N$, where $t_{N}< T\leq
t_{N}+k$.
Let $(B^{h,k})_{h,k\in(0,1)}$ be a family of operator-valued functions
$B^{h,k}\colon \{0,\dots,N\}\rightarrow \LB(H;V_h)$ such that
$B_n^{h,k}=B_n^{h,k}P_h$, and let $(A_h)_{h\in(0,1)}$ be a collection
of linear operators $A_h\colon V_h\rightarrow V_h$ such that for
$n=1,\dots,N$ it holds that
\begin{align}
\label{as:B}
  \big\|
    A_h^{\frac{s}\rho}
    B_n^{h,k}
    x
  \big\|
& \leq
  L_s
  t_n^{-s}
  \|x\|,
  \quad
  x\in H
  ,\
  0\leq s\leq1,
\end{align}
with the same constants $(L_s)_{s\in[0,1]}$ as in \eqref{as:S}. For other constants $(K_\epsilon)_{\epsilon\in(0,\infty)}$ and $(R_s)_{s\in[0,1]}$, let the corresponding error operator $(E^{h,k})_{h,k\in(0,1)}$, given by $E_n^{h,k}=S_{t_n}-B_n^{h,k}$ for $n=0,\dots,N$, satisfy the smooth data error estimate
\begin{align}
\label{as:E1}
  \big\|
    E_n^{h,k}x
  \big\|
& \leq
  K_{\epsilon}
  \big(
    h^{\sigma}
    +
    k^{\frac{\sigma}2}
  \big)
  \|x\|_{\dot{H}^{\sigma(1+\epsilon)}},
  \quad
  0\leq\sigma\leq 2
  ,\
  \epsilon>0,
\end{align}
and the non-smooth data error estimates, for $n=1,\dots,N$, $t>0$,
\begin{align}
\label{as:E2}
& \big\|
    A^{\frac{s}{2\rho}}
    E_n^{h,k}
    x
  \big\|
  \leq
  R_s
  \big(
    h^{\frac\sigma\rho}
    +
    k^{\frac{\sigma}2}
  \big)
  t_n^{-\frac{\sigma+s}{2}}
  \|x\|,
  \quad
  0\leq\sigma\leq 2,\
  0\leq s\leq1-\sigma/2,\\
\label{as:e}
& \big\|
    \big(
      e^{-tA} - e^{-tA_h} P_h
    \big)
    x
  \big\|
  \leq
  R_0 h^{\sigma}
  t^{-\frac\sigma2}
  \|x\|
  ,\quad
  0\leq\sigma\leq2,
\end{align}
where $(e^{-tA})_{t\in[0,\infty)}$ and $(e^{-tA_h})_{t\in[0,\infty)}$ are the analytic semigroups generated by $-A$ and $-A_h$, respectively. We introduce the piecewise continuous operator function $\tilde{E}^{h,k}\colon[0,T]\to \LB$ given by $\tilde{E}_t^{h,k}=S_t-B_n^{h,k}$ for $t\in[t_n,t_{n+1})$ and $n=0,\dots N-1$. By \eqref{as:S} and \eqref{as:E1} the family $(\tilde{E}_t^{h,k})_{t\in[0,T]}$ satisfies for $t\in(0,T]$ the bound
\begin{align}
\label{ineq:Etilde}
  \big\|
    A^{\frac{s}{2\rho}}
    \tilde{E}_t^{h,k}
  \big\|_{\LB}
& \leq
  R_s
  \big(
    h^{\frac\sigma\rho}
    +
    k^{\frac{\sigma}2}
  \big)
  t^{-\frac{\sigma+s}{2}}
  ,\quad
  0\leq\sigma\leq 2,\
  0\leq s\leq1-\sigma/2.
\end{align}
The discrete and continuous stochastic convolutions are defined by
\begin{align*}
  W_t^S=\int_0^t S_{t-s}\diff{W_s}
  ,\quad
  t\in[0,T]
  ;\quad
  W_n^{B^{h,k}}
  =
  \sum_{j=0}^{n-1}
    \int_{t_j}^{t_{j+1}}
      B_{n-j}^{h,k}
    \diff{W_t},
  \quad
  n=1,\dots,N.
\end{align*}

We now define approximations of equation \eqref{eq:SVIE}. For $h,k\in(0,1)$, let $(X_n^{h,k})_{n=0}^N$ be the solution to the equation
\begin{align}
\label{eq:approx}
  X_n^{h,k}
  =
  B_n^{h,k} x_0
  +
  k\sum_{j=1}^{n-1}
    B_{n-j}^{h,k}
    F(X_j^{h,k})
  +
  W_n^{B^{h,k}},
  \quad
  n=1,\dots,N.
\end{align}

\subsection{Strong convergence}
Boundedness in the $L^p(\Omega;H)$-sense of the approximate family $(X_n^{h,k})_{n=0}^N$ is stated in the next proposition. For a proof in the parabolic case, i.e., for $\rho=1$, see \cite{AnderssonKruseLarsson}*{Proposition 3.15}. The general case is proved in the same way but using the different smoothing property in \eqref{as:B}.
\begin{proposition}
\label{lemma:Stability}
Let the setting of Section \ref{sec4:1} hold. For $p\geq 2$ it holds that
\begin{align*}
  \sup_{h,k\in(0,1)}
  \max_{n\in\{0,\dots,N\}}
  \big\|
    X_n^{h,k}
  \big\|_{L^p(\Omega;H)}
  <\infty.
\end{align*}
\end{proposition}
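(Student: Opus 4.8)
The plan is to take the $L^p(\Omega;H)$-norm of the scheme \eqref{eq:approx}, bound the three terms on the right-hand side separately, and close the resulting recursion with the discrete Gronwall inequality of Lemma~\ref{lemma2:Gronwall}, exactly as in the parabolic case \cite{AnderssonKruseLarsson}*{Proposition~3.15}. Write $\varphi_n=\|X_n^{h,k}\|_{L^p(\Omega;H)}$; since $V_h$ is finite-dimensional, $\varphi_n<\infty$ for each fixed $h,k$, so the point is to bound $\varphi_n$ uniformly in $n$, $h$, $k$. The first term is controlled directly by \eqref{as:B} with $s=0$: $\|B_n^{h,k}x_0\|\le L_0\|x_0\|$. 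For the drift term, \eqref{as:F} gives $F\in\Cb^1(H;H)$, hence the linear growth bound $\|F(x)\|\le\|F(0)\|+|F|_{\Cb^1(H;H)}\|x\|$ by \eqref{eq:Taylor}; together with \eqref{as:B} ($s=0$) and $t_n\le T$ this yields
\begin{align*}
  \Big\|
    k\sum_{j=1}^{n-1}B_{n-j}^{h,k}F(X_j^{h,k})
  \Big\|_{L^p(\Omega;H)}
  \le
  L_0T\|F(0)\|
  +
  L_0|F|_{\Cb^1(H;H)}\,k\sum_{j=1}^{n-1}\varphi_j.
\end{align*}

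For the stochastic term, $W_n^{B^{h,k}}$ is the Wiener integral of the deterministic piecewise constant integrand equal to $B_{n-j}^{h,k}$ on $[t_j,t_{j+1})$, so Burkholder's inequality \eqref{ineq:Burkholder} gives $\|W_n^{B^{h,k}}\|_{L^p(\Omega;H)}^2\le(\tfrac{p(p-1)}2)^2\,k\sum_{j=0}^{n-1}\|B_{n-j}^{h,k}\|_{\LB_2^0}^2$. Factoring $A^{\frac{\beta\rho-1}{2\rho}}$ out of $Q^{1/2}$ and using \eqref{as:Q} reduces this to the right-sided smoothing estimate $\|B_m^{h,k}A^{\frac{1-\beta\rho}{2\rho}}\|_{\LB}\le C\,t_m^{-\frac{1-\beta\rho}{2}}$, which I would obtain from the decomposition $B_m^{h,k}=S_{t_m}-E_m^{h,k}$: the bound for $S_{t_m}A^{\frac{1-\beta\rho}{2\rho}}$ follows from \eqref{as:S} with $s=\frac{1-\beta\rho}{2}\in[0,1)$ and self-adjointness of $S_{t_m}$ and $A$ (extending $S_{t_m}$ to $\dot{H}^{-\frac{1-\beta\rho}{2\rho}}\supset H$), and the bound for $E_m^{h,k}A^{\frac{1-\beta\rho}{2\rho}}$ from the non-smooth data estimate \eqref{as:E2} (with $\sigma=0$, $s=1-\beta\rho$) applied to $E_m^{h,k}$ and its adjoint. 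Inserting this and using $t_m=mk$, $\beta\rho\in(0,1]$,
\begin{align*}
  k\sum_{j=0}^{n-1}\big\|B_{n-j}^{h,k}\big\|_{\LB_2^0}^2
  \le
  C\,k\sum_{m=1}^{n}t_m^{-(1-\beta\rho)}
  =
  C\,k^{\beta\rho}\sum_{m=1}^{n}m^{-(1-\beta\rho)}
  \le
  C\,t_n^{\beta\rho}
  \le
  C\,T^{\beta\rho},
\end{align*}
so $\|W_n^{B^{h,k}}\|_{L^p(\Omega;H)}\le C$ uniformly in $n$, $h$, $k$.

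Collecting the three estimates gives $\varphi_n\le M_0+M_1\,k\sum_{j=1}^{n-1}\varphi_j$ for $1\le n\le N$, with $M_0,M_1$ independent of $n$, $h$, $k$; Lemma~\ref{lemma2:Gronwall} (with $\mu=\nu=1$) then yields $\varphi_n\le M_0M_2$ with $M_2=M_2(M_1,T)$ again independent of $n$, $h$, $k$, which is the assertion. The only genuinely delicate step is the right-sided smoothing estimate for $B_m^{h,k}$: assumption \eqref{as:B} provides smoothing by powers of $A_h$ on the \emph{left} of $B_m^{h,k}$, whereas the noise regularity \eqref{as:Q} forces a power of $A$ on the \emph{right}, and bridging this gap — by passing through $S_{t_m}$, using self-adjointness, and invoking the non-smooth data error estimates — is precisely where the general case departs from the parabolic argument, \emph{using the different smoothing property in} \eqref{as:B}. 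The remaining ingredients (linear growth of $F$, Burkholder's inequality, summability of the singular weights $t_m^{-(1-\beta\rho)}$, and discrete Gronwall) are routine.
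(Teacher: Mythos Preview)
Your proposal is correct and follows the same Gronwall-type argument the paper points to in \cite{AnderssonKruseLarsson}*{Proposition~3.15}; the paper itself omits the details and merely records that the general case is handled analogously with the $\rho$-dependent smoothing \eqref{as:B}. Your treatment of the stochastic convolution via the decomposition $B_m^{h,k}=S_{t_m}-E_m^{h,k}$, self-adjointness of $S_{t_m}$, and the non-smooth data estimate \eqref{as:E2} for the adjoint of $E_m^{h,k}$ tacitly uses that $B_m^{h,k}$ (hence $E_m^{h,k}$) is self-adjoint---this is not listed among the abstract hypotheses of Section~\ref{sec4:1}, but the paper relies on the same identification in the analogous step of Theorem~\ref{thm:strong} (where $\|A^{\frac{1-\beta\rho}{2\rho}}\tilde{E}_t^{h,k}\|_{\LB}$ is used in place of $\|\tilde{E}_t^{h,k}A^{\frac{1-\beta\rho}{2\rho}}\|_{\LB}$) and it holds in all the examples of Section~\ref{sec5}.
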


We next prove strong convergence. This is interesting in itself, but it is also used in our proof of the Malliavin regularity of $X$ in Proposition \ref{lemma:Stability3}.
\begin{theorem}
\label{thm:strong}
Let the setting of Section \ref{sec4:1} hold, let $X$ be the solution to \eqref{eq:SVIE} and let $(X^{h,k})_{h,k\in(0,1]}$ be the solutions to \eqref{eq:approx}. For $\gamma\in[0,\beta)$, $p\in[2,\infty)$, there exists $C>0$ such that
\begin{align*}
& \max_{n\in\{0,\dots,N\}}
  \big\|
    X_{t_n}-X_n^{h,k}
  \big\|_{L^p(\Omega;H)}
  \leq
  C
  \big(
    h^{\gamma}+k^{\frac{\rho\gamma}2}
  \big)
  ,\quad
  h,k\in(0,1).
\end{align*}
\end{theorem}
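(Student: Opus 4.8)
The plan is to bound the error $X_{t_n}-X_n^{h,k}$ by splitting it into three contributions: the deterministic initial-data term, the nonlinear drift term, and the stochastic convolution term. Writing
\begin{align*}
  X_{t_n}-X_n^{h,k}
  &=
  E_n^{h,k}x_0
  +
  \Big(
    \int_0^{t_n}S_{t_n-s}F(X_s)\diff{s}
    -
    k\sum_{j=1}^{n-1}B_{n-j}^{h,k}F(X_j^{h,k})
  \Big)
  +
  \big(
    W_{t_n}^S - W_n^{B^{h,k}}
  \big),
\end{align*}
I would estimate each piece in the $L^p(\Omega;H)$-norm. For the first term, \eqref{as:E1} with $\sigma=\gamma(1+\epsilon)^{-1}$ (or a slightly reorganized choice) together with $x_0\in\dot{H}^3$ from \eqref{as:x0} gives a bound of order $h^\gamma+k^{\rho\gamma/2}$; actually since $x_0$ is very smooth one gets an even better rate, so this term is harmless. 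The stochastic convolution term is the genuinely stochastic part: using \eqref{ineq:Burkholder}, it reduces to an $L^2$-in-time operator-norm estimate for the error kernel, which I would control by inserting $A^{(\beta\rho-1)/(2\rho)}$ as in the proof of Proposition~\ref{prop:Holder}, using \eqref{as:Q} and the non-smooth data estimates \eqref{as:E2}--\eqref{ineq:Etilde}. One must be careful to treat the continuous stochastic convolution $W_{t_n}^S$ versus the piecewise-frozen discrete one $W_n^{B^{h,k}}$ by comparing with $\tilde{E}^{h,k}$ and separately accounting for the time-averaging error on each subinterval $[t_j,t_{j+1})$; the singularity of the kernel at $s=t_n$ forces the fractional-power bookkeeping, and the exponent $q'$ close to $1$ is not needed here since the strong (not weak) norm is used.

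The nonlinear term is handled by a perturbation-plus-Gronwall argument. I would first rewrite
\begin{align*}
  \int_0^{t_n}S_{t_n-s}F(X_s)\diff{s}
  -
  k\sum_{j=1}^{n-1}B_{n-j}^{h,k}F(X_j^{h,k})
  &=
  \sum_{j=1}^{n-1}\int_{t_j}^{t_{j+1}}
    \big(S_{t_n-s}F(X_s) - B_{n-j}^{h,k}F(X_{t_j})\big)\diff{s}
  + (\text{endpoint terms}),
\end{align*}
and split the integrand as $S_{t_n-s}(F(X_s)-F(X_{t_j})) + (S_{t_n-s}-B_{n-j}^{h,k})F(X_{t_j}) + B_{n-j}^{h,k}(F(X_{t_j})-F(X_j^{h,k}))$. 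The first summand is controlled by the Hölder regularity of $X$ from Proposition~\ref{prop:Holder} together with $F\in\Cb^1(H;H)$; the second uses \eqref{as:E2}/\eqref{ineq:Etilde} applied to $F(X_{t_j})\in H$ and the a priori bound from Proposition~\ref{lemma:Stability} (it is here that one spends one of the two available powers of $h$ and $k$, hence the rate is $h^\gamma+k^{\rho\gamma/2}$ rather than double that); the third summand is dominated by $k\sum_{j=1}^{n-1}\|F\|_{\Cb^1}\|X_{t_j}-X_j^{h,k}\|_{L^p(\Omega;H)}$, which is exactly the term that feeds the Gronwall iteration. Collecting everything, one arrives at an inequality of the form
\begin{align*}
  \varphi_n
  \leq
  C\big(h^\gamma+k^{\rho\gamma/2}\big)
  +
  C\,k\sum_{j=1}^{n-1}t_{n-j}^{-1+\nu}\varphi_j,
\end{align*}
with $\varphi_n=\|X_{t_n}-X_n^{h,k}\|_{L^p(\Omega;H)}$ and some $\nu>0$ coming from the integrable singularity of the kernel, to which Lemma~\ref{lemma2:Gronwall} applies directly and yields the claimed bound uniformly in $n$.

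The main obstacle is the stochastic convolution estimate together with the mismatch between the continuous kernel $S_{t_n-s}$ and the piecewise-constant discrete kernel $B_{n-j}^{h,k}$ near the diagonal $s\to t_n$: one needs the smoothing of order $(1-\beta\rho)/(2\rho)$ supplied by \eqref{as:Q} to win back exactly enough regularity, and then must verify that the resulting exponents in the time integrals are $>-1$ for all $\gamma<\beta$, which is where the restriction $\gamma\in[0,\beta)$ (rather than $\gamma\le\beta$) is used. For the parabolic case $\rho=1$ this is essentially \cite{AnderssonKruseLarsson}*{proof of Theorem~3.14}, and the general $\rho\in[1,2)$ case follows the same lines with \eqref{as:S}, \eqref{as:B}, \eqref{ineq:Etilde} replacing the semigroup smoothing; the nonlinear and Gronwall parts are routine once the linear stochastic part is in place.
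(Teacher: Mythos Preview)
Your proposal is correct and follows essentially the same decomposition and estimates as the paper: error operator on $x_0$ via \eqref{as:E1}, the $\tilde{E}^{h,k}$-bound \eqref{ineq:Etilde} on the drift and stochastic convolution (the latter combined with \eqref{ineq:Burkholder} and \eqref{as:Q}), the H\"older regularity of Proposition~\ref{prop:Holder} for $F(X_t)-F(X_{t_j})$, and a Gronwall argument on the remaining term. The only cosmetic difference is that the paper uses the plain bound $\|B_{n-j}^{h,k}\|_{\LB}\le L_0$ from \eqref{as:B}, so the resulting Gronwall inequality has no singular weight and the standard discrete Gronwall lemma suffices; your invocation of Lemma~\ref{lemma2:Gronwall} with a power $t_{n-j}^{-1+\nu}$ is unnecessary here (though not wrong).
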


\begin{proof}
We take the difference of \eqref{eq:SVIE} and \eqref{eq:approx} to obtain the equation for the error,
\begin{equation}
\begin{split}
\label{eq:difference}
  X_{t_n}-X_n^{h,k}
& =
  \big(
    S_{t_n}-B_n^{h,k}
  \big)
  x_0
  +
  \sum_{j=0}^{n-1}
    \int_{t_j}^{t_{j+1}}
      \big(
        S_{t_n-t}-B_{n-j}^{h,k}
      \big)
      F(X_t)
    \diff{t}\\
& \quad
  +
  \sum_{j=0}^{n-1}
    \int_{t_j}^{t_{j+1}}
      B_{n-j}^{h,k}
    \big(
      F(X_{t})-F(X_j^{h,k})
    \big)
    \diff{t}
  +
  W_{t_n}^S-W_n^{B^{h,k}}.
\end{split}
\end{equation}
The deterministic nature of the first two terms allows us to obtain twice the rate of convergence compared to the other terms.  This will be used later in the proof of Lemma~\ref{thm:strong2}. Recall that $\tilde{E}_t^{h,k}=S_t-B_{n}^{h,k}$ for $t\in[t_n,t_{n+1})$ and $n=0,\dots,N-1$. We get
\begin{align*}
  \big\|
    X_{t_n}-X_n^{h,k}
  \big\|_{L^p(\Omega;H)}
& \leq
  \big\|
    E_n^{h,k}
    x_0
  \big\|_H
  +
  \Big\|
    \int_0^{t_n}
      \tilde{E}_{t_n-t}^{h,k}
      F(X_t)
    \diff{t}
  \Big\|_{L^p(\Omega;H)}\\
& \quad
  +
  \Big\|
    \sum_{j=0}^{n-1}
      \int_{t_j}^{t_{j+1}}
        B_{n-j}^{h,k}
        \big(
          F(X_{t})-F(X_j^{h,k})
        \big)
      \diff{t}
  \Big\|_{L^p(\Omega;H)}\\
& \quad
  +
  \big\|
    W_{t_n}^S-W_n^{B^{h,k}}
  \big\|_{L^p(\Omega;H)}.
\end{align*}
Using \eqref{as:x0}, \eqref{as:E1} with $\sigma=2\rho\gamma$, $\epsilon=(3-2\gamma\rho)/2\gamma\rho$ we obtain
\begin{align}
\label{eq:x0}
  \max_{n\in\{0,\dots,N\}}
  \big\|
    E_n^{h,k}x_0
  \big\|
  \leq
  K_{\frac{3-2\gamma\rho}{2\gamma\rho}}
  \big(
    h^{2\rho\gamma}+k^{\rho\gamma}
  \big)
  \|x_0\|_{\dot{H}^{3}}.
\end{align}
By Proposition \ref{prop:existence}, \eqref{as:F}, \eqref{ineq:Etilde} it holds that
\begin{equation}
\label{eq:Fconv}
\begin{split}
& \Big\|
    \int_0^{t_n}
      \tilde{E}_{t_n-t}^{h,k}
      F(X_t)
    \diff{t}
  \Big\|_{L^p(\Omega;H)}
  \leq
  \int_0^{t_n}
    \big\|
      \tilde{E}_{t_n-t}^{h,k}
    \big\|_\LB
    \big\|
      F(X_t)
    \big\|_{L^p(\Omega;H)}
  \diff{t}\\
& \qquad
  \leq
  R_0
  \big(
    h^{2\gamma}
    +
    k^{\rho\gamma}
  \big)
  |F|_{\Cb^1(H;H)}
  \Big(
    1+\sup_{t\in[0,T]}\big\|X_t\big\|_{L^p(\Omega;H)}
  \Big)
  \int_0^{t_n}
    (t_n-t)^{-\rho\gamma}
  \diff{t}\\
& \qquad
  \lesssim
    h^{2\gamma}
    +
    k^{\rho\gamma}.
\end{split}
\end{equation}
Using \eqref{as:F}, \eqref{eq:Taylor}, \eqref{as:B}, and Proposition \ref{prop:Holder} yields
\begin{align*}
& \Big\|
    \sum_{j=0}^{n-1}
      \int_{t_j}^{t_{j+1}}
        B_{n-j}^{h,k}
        \big(
          F(X_t)-F(X_j^{h,k})
        \big)
      \diff{t}
  \Big\|_{L^p(\Omega;H)}\\
& \quad
  \leq
  |F|_{\Cb^1(H;H)}
  \sum_{j=0}^{n-1}
    \int_{t_j}^{t_{j+1}}
      \big\|
        B_{n-j}^{h,k}
      \big\|_{\LB}
      \big\|
        X_t-X_j^{h,k}
      \big\|_{L^p(\Omega;H)}
    \diff{t}\\
& \quad
  \leq
  L_0
  |F|_{\Cb^1(H;H)}
  \sum_{j=0}^{n-1}
    \int_{t_j}^{t_{j+1}}
      \Big(
        \big\|
          X_t-X_{t_j}
        \big\|_{L^p(\Omega;H)}
        +
        \big\|
          X_{t_j}-X_j^{h,k}
        \big\|_{L^p(\Omega;H)}
      \Big)
    \diff{t}\\
& \quad
  \leq
  L_0
  |F|_{\Cb^1(H;H)}
  \Big(
    CTk^{\frac{\rho\gamma}2}
    +
    k
    \sum_{j=0}^{n-1}
      \big\|
        X_{t_j}-X_j^{h,k}
      \big\|_{L^p(\Omega;H)}
  \Big).
\end{align*}
For the error of the stochastic convolution
we write the difference in the form
\begin{align}
\label{eq:diffW}
  W_{t_n}^S-W_n^{B^{h,k}}
& =
  \sum_{j=0}^{n-1}
    \int_{t_j}^{t_{j+1}}
      \big(
        S_{t_n-t}-B_{n-j}^{h,k}
      \big)
    \diff{W_t}\\
& =
  \int_0^{t_n}
    \tilde{E}_{t_n-t}^{h,k}
  \diff{W_t}
  =
  \int_0^{t_n}
    \tilde{E}_{t}^{h,k}
  \diff{W_t}.
\end{align}
By \eqref{ineq:Burkholder} and \eqref{ineq:Etilde} with $\sigma=\gamma\rho$, and $s=1-\beta\rho$, we obtain the estimate
\begin{align*}
& \big\|
    W_{t_n}^S-W_n^{B^{h,k}}
  \big\|_{L^p(\Omega;H)}
  \leq
  \Big(
    \frac{p(p-1)}2
    \int_{0}^{t_n}
      \big\|
        A^{\frac{\beta\rho-1}{2\rho}}
      \big\|_{\LB_2^0}^{2}
      \big\|
        A^{\frac{1-\beta\rho}{2\rho}}
        \tilde{E}_t^{h,k}
      \big\|_{\LB}^{2}
    \diff{t}
  \Big)^\frac12\\
& \quad
  \lesssim
  R_{1-\beta\rho}
  \Big(
    \int_{0}^{t_n}
      t^{\rho(\beta-\gamma)-1}
    \diff{t}
  \Big)^\frac12
  \big(
    h^{\gamma}
    +
    k^{\frac{\rho\gamma}2}
  \big)
  \lesssim
  h^{\gamma}
  +
  k^{\frac{\rho\gamma}2}.
\end{align*}
Collecting the estimates yields that, for all $n=0,\dots,N$, it holds
\begin{align*}
  \big\|
    X_{t_n}-X_n^{h,k}
  \big\|_{L^p(\Omega;H)}
  \lesssim
  h^\gamma+k^{\frac{\rho\gamma}2}
  +
  k\sum_{j=0}^{n-1}
    \big\|
      X_{t_j}-X_j^{h,k}
    \big\|_{L^p(\Omega;H)}.
\end{align*}
The proof is completed by Gronwall's lemma.
\end{proof}

\subsection{Regularity and weak convergence}
Here we state and prove our main result on weak convergence. It is based on a strong error estimate in the $\mathbf{M}^{1,p}(H)^*$ norm combined with boundedness of $X$ and $X^{h,k}$ in $\mathbf{M}^{1,p,q}(H)$ for suitable $p,q$. The methodology was introduced in \cite{AnderssonKruseLarsson}, but here we exploit it further in a more general setting. We begin by proving the Malliavin differentiability of $X^{h,k}$.

\begin{proposition}
\label{lemma:Stability2}
Let the setting of Section \ref{sec4:1} hold, and let $X^{h,k}$ be the
solution to \eqref{eq:approx}. For $p\in[2,\infty)$,
$q\in[2,\frac2{1-\rho\beta})$, it holds that 
\begin{align*}
  \sup_{h,k\in(0,1)}
  \max_{n\in\{0,\dots,N\}}
  \Big(
  \big\|
    X_n^{h,k}
  \big\|_{\mathbf{M}^{1,p,q}(H)}
    +\big|
      X_n^{h,k}
    \big|_{\mathbf{M}^{1,\infty,q}(H)}
  \Big)
  <\infty.
\end{align*}
\end{proposition}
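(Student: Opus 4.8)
The plan is to argue by induction on $n$, combining the closability of the Malliavin derivative, the chain rule for $F\in\Cb^1(H;H)$, and the explicit form of the Malliavin derivative of a Wiener integral with deterministic integrand; the uniform bound then follows from a discrete Gronwall argument applied to the equation satisfied by $DX^{h,k}$.

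First I would show, for each fixed $h,k$ and by induction on $n$, that $X_n^{h,k}\in\mathbf{M}^{1,p,q}(H)$ and that $DX_n^{h,k}$ is bounded over $\Omega$ as an element of $L^q(0,T;\LB_2^0)$. The base case $X_1^{h,k}$ (empty sum) involves only the deterministic term $B_n^{h,k}x_0$ and the discrete stochastic convolution. The latter is $W_n^{B^{h,k}}=\int_0^{t_n}\Psi_t^{(n)}\diff{W_t}$ with the deterministic, piecewise constant integrand $\Psi_t^{(n)}=B_{n-j}^{h,k}$ for $t\in[t_j,t_{j+1})$, which lies in $L^q(0,T;\LB_2^0)$ for every fixed $h,k$ (each $B_m^{h,k}$ has finite rank); hence $W_n^{B^{h,k}}\in\mathbf{M}^{1,p,q}(H)$ with $D_tW_n^{B^{h,k}}=\Psi_t^{(n)}$. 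For the inductive step, $F(X_j^{h,k})\in\mathbf{M}^{1,p,q}(H)$ with $D_tF(X_j^{h,k})=F'(X_j^{h,k})D_tX_j^{h,k}$ by the chain rule for $\Cb^1$-mappings (see, e.g., \cite{AnderssonKruseLarsson}), using that $F'$ is bounded by \eqref{as:F}, and $B_{n-j}^{h,k}F(X_j^{h,k})\in\mathbf{M}^{1,p,q}(H)$ since $B_{n-j}^{h,k}$ is a bounded operator that commutes with $D$; a finite sum of such terms closes the induction, and the same relations propagate the $L^\infty(\Omega)$-boundedness of the derivative.

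The heart of the matter is the bound uniform in $h,k,n$. Applying $D_t$ to \eqref{eq:approx} yields
\begin{align*}
  D_tX_n^{h,k}
  =
  \Psi_t^{(n)}
  +
  k\sum_{j=1}^{n-1}
    B_{n-j}^{h,k}F'(X_j^{h,k})D_tX_j^{h,k}.
\end{align*}
For the forcing term I would use that
\begin{align*}
  \big\|\Psi^{(n)}\big\|_{L^q(0,T;\LB_2^0)}^q
  =
  k\sum_{m=1}^{n}\big\|B_m^{h,k}\big\|_{\LB_2^0}^q,
\end{align*}
together with the key estimate $\|B_m^{h,k}\|_{\LB_2^0}\lesssim t_m^{-(1-\rho\beta)/2}$ uniformly in $h,k$. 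This last bound is obtained exactly as the one for the continuous stochastic convolution in Section~\ref{sec3}: factor off $A^{(\rho\beta-1)/(2\rho)}Q^{1/2}$ (finite by \eqref{as:Q}) and estimate the remaining factor via the smoothing property \eqref{as:B}, equivalently by writing $B_m^{h,k}=S_{t_m}-E_m^{h,k}$ and using \eqref{as:S} and \eqref{as:E2}. Since $q<2/(1-\rho\beta)$ we have $q(1-\rho\beta)/2<1$, so $k\sum_{m=1}^{n}t_m^{-q(1-\rho\beta)/2}$ is dominated by a constant multiple of $\int_0^{t_n}t^{-q(1-\rho\beta)/2}\diff{t}$ and hence bounded uniformly; this is precisely where the admissible range of $q$ enters.

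For the drift sum I would estimate pointwise in $(t,\omega)$, using $\|B_{n-j}^{h,k}\|_{\LB}\le L_0$ (from \eqref{as:B} with $s=0$) and $\|F'(X_j^{h,k})\|_{\LB}\le|F|_{\Cb^1(H;H)}$, to obtain
\begin{align*}
  \big\|D_tX_n^{h,k}\big\|_{\LB_2^0}
  \le
  \big\|\Psi_t^{(n)}\big\|_{\LB_2^0}
  +
  L_0\,|F|_{\Cb^1(H;H)}\,k\sum_{j=1}^{n-1}\big\|D_tX_j^{h,k}\big\|_{\LB_2^0}.
\end{align*}
Taking the $L^q$-norm in $t$ by Minkowski's inequality and then the $L^r(\Omega)$-norm for $r=p$ and $r=\infty$ (the forcing term being deterministic, its $L^\infty(\Omega)$-norm equals the quantity bounded above) gives
\begin{align*}
  \big\|DX_n^{h,k}\big\|_{L^r(\Omega;L^q(0,T;\LB_2^0))}
  \le
  C
  +
  L_0\,|F|_{\Cb^1(H;H)}\,k\sum_{j=1}^{n-1}\big\|DX_j^{h,k}\big\|_{L^r(\Omega;L^q(0,T;\LB_2^0))}
\end{align*}
with $C$ independent of $h,k,n$. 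The discrete Gronwall lemma (as in Lemma~\ref{lemma2:Gronwall}) then bounds the seminorms $|X_n^{h,k}|_{\mathbf{M}^{1,p,q}(H)}$ and $|X_n^{h,k}|_{\mathbf{M}^{1,\infty,q}(H)}$ uniformly, and combining with Proposition~\ref{lemma:Stability} for the $L^p(\Omega;H)$-part completes the proof. I expect the main obstacle to be the uniform estimate $\|B_m^{h,k}\|_{\LB_2^0}\lesssim t_m^{-(1-\rho\beta)/2}$ with the correct singularity at $t_m=0$: it is what forces the restriction $q<2/(1-\rho\beta)$ and requires carefully combining the abstract smoothing bounds with the noise regularity \eqref{as:Q}.
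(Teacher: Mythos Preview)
Your proposal is correct and follows essentially the same route as the paper: induction on $n$ to establish Malliavin differentiability together with the explicit equation \eqref{eq:DXhk} for $D_tX_n^{h,k}$, the bound $\|B_m^{h,k}\|_{\LB_2^0}\lesssim t_m^{-(1-\rho\beta)/2}$ via \eqref{as:Q} and the smoothing estimates (which is exactly where the restriction $q<2/(1-\rho\beta)$ enters), and then a discrete Gronwall argument combined with Proposition~\ref{lemma:Stability}. The paper only sketches this and refers to \cite{AnderssonKruseLarsson}*{Proposition~3.16} for the parabolic case $\rho=1$; your write-up simply fills in the details.
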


\begin{proof}[Sketch of proof]
Note first that $DX_0^{h,k}=0$ as $X_0^{h,k}$ is deterministic. Therefore it follows inductively that $X_j^{h,k}$, $j=0,\dots,N$, are differentiable and the derivative satisfies the equation
\begin{align}
\label{eq:DXhk}
  D_rX_n^{h,k}
  =
  k \sum_{j=0}^{n-1}
    B_{n-j}^{h,k}
    F'(X_j^{h,k})
    D_rX_j^{h,k}
  +
  \sum_{j=0}^{n-1}
    \chi_{[t_j,t_{j+1})}(r)
    B_{n-j}^{h,k}.
\end{align}
The proof is performed by straightforward analysis of this equation
using the discrete Gronwall's lemma, see
\cite{AnderssonKruseLarsson}*{Proposition 3.16} for details in the
parabolic case $\rho=1$. The general case is treated analogously.
\end{proof}

The Malliavin regularity of $X$ is next obtained by a limiting procedure.
\begin{proposition}
\label{lemma:Stability3}
Let the setting of Section \ref{sec4:1} hold and let $X$ be the solution to \eqref{eq:SVIE}. For $p\in[2,\infty)$, $q\in[2,\frac2{1-\rho\beta})$, it holds that $X\in \C(0,T;\mathbf{M}^{1,p,q}(H))$, and moreover it holds that
\begin{align*}
  \sup_{t\in[0,T]}
  \big|
    X_t
  \big|_{\mathbf{M}^{1,\infty,q}(H)}
  <\infty.
\end{align*}
\end{proposition}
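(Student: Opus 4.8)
The plan is to combine the strong convergence result (Theorem~\ref{thm:strong}) with the uniform Malliavin bounds for the approximations (Proposition~\ref{lemma:Stability2}) and pass to the limit using the weak compactness principle of Lemma~\ref{lemma:LimitReg}. First I would fix $p\in[2,\infty)$, $q\in[2,\tfrac2{1-\rho\beta})$ and $t\in[0,T]$. By Theorem~\ref{thm:strong} we know that for a suitable sequence $(h,k)\to 0$ (say along $h=h_n\to 0$ with a correspondingly chosen $k_n\to 0$, and with $t_{n}$ chosen so that $t_{N}\to t$), the approximations $X_{N}^{h_n,k_n}$ converge to $X_t$ in $L^p(\Omega;H)$, hence in particular weakly in $L^2(\Omega;H)$. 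On the other hand, Proposition~\ref{lemma:Stability2} gives $\sup_n \|X_{N}^{h_n,k_n}\|_{\mathbf{M}^{1,p,q}(H)}<\infty$ and $\sup_n |X_{N}^{h_n,k_n}|_{\mathbf{M}^{1,\infty,q}(H)}<\infty$. Applying Lemma~\ref{lemma:LimitReg} with $\mathcal{X}=\mathbf{M}^{1,p,q}(H)$ (or $\mathcal X=\mathbf M^{1,\infty,q}(H)$ for the seminorm bound, noting these are reflexive for $q<\infty$, $p<\infty$ — I would need to confirm reflexivity or argue via the Gelfand triple structure) and $\mathcal{Y}=L^2(\Omega;H)$ then yields $X_t\in\mathbf{M}^{1,p,q}(H)$, together with the bound $\sup_{t\in[0,T]}|X_t|_{\mathbf{M}^{1,\infty,q}(H)}<\infty$ by lower semicontinuity of the norm under weak limits.

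For the continuity statement $X\in\C(0,T;\mathbf{M}^{1,p,q}(H))$, once we know $X_t\in\mathbf{M}^{1,p,q}(H)$ for every $t$ with a uniform bound, I would argue as follows. One shows that $t\mapsto X_t$ satisfies a contraction/fixed-point equation directly in the space $\C(0,T;\mathbf{M}^{1,p,q}(H))$: differentiating the mild formulation \eqref{eq:SVIE} (justified since $X_t$ is now known to be Malliavin differentiable) gives
\begin{align}
\label{eq:DX}
  D_rX_t
  =
  \int_r^t S_{t-s}F'(X_s)D_rX_s\diff{s}
  +
  S_{t-r}\chi_{[0,t]}(r),
\end{align}
and the stochastic convolution term contributes $S_{t-r}$ while the drift term is handled by the local Lipschitz structure of $F$ and the smoothing \eqref{as:S}. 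Continuity in $t$ of the $L^p(\Omega;H)$-component is Proposition~\ref{prop:Holder}; continuity of the $\mathbf{M}^{1,p,q}(H)$-seminorm component follows from continuity in $L^p(\Omega;L^q(0,T;\LB_2^0))$ of the map $t\mapsto DX_t$, which one obtains from \eqref{eq:DX} by estimating $\|DX_{t_2}-DX_{t_1}\|$ using \eqref{as:Q}, \eqref{as:S}, the uniform bound just established, and a Gronwall argument — exactly parallel to the Hölder estimates in Proposition~\ref{prop:Holder} but now in the Malliavin norm. Alternatively, and more cleanly, one can simply re-run the Banach fixed-point argument of Proposition~\ref{prop:existence} in the complete space $\C(0,T;\mathbf{M}^{1,p,q}(H))$, using \eqref{as:Q} to control the stochastic convolution in this norm (this requires $q<\tfrac2{1-\rho\beta}$, which is why that range appears) and \eqref{as:F} together with Lemma~\ref{lemma:operator_norm}-type bounds for the drift.

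The main obstacle I anticipate is the interplay between the two different approaches to regularity: the limiting procedure via Lemma~\ref{lemma:LimitReg} gives membership and the uniform bound cheaply but says nothing a priori about continuity in $t$, while re-running the fixed-point argument in $\C(0,T;\mathbf{M}^{1,p,q}(H))$ requires care that the mild solution obtained there coincides with the solution $X$ from Proposition~\ref{prop:existence} (uniqueness in $L^p(\Omega;H)$ settles this, since $\mathbf{M}^{1,p,q}(H)\hookrightarrow L^2(\Omega;H)$). A secondary technical point is the sharp constraint $q<\tfrac2{1-\rho\beta}$: in estimating $\|\int_0^t S_{t-s}\diff W_s\|_{\mathbf{M}^{1,p,q}(H)}$ via \eqref{ineq:Burkholder} applied to $D(\int_0^tS_{t-s}\diff W_s)_r = S_{t-r}$, one needs $\int_0^t\|A^{\frac{\beta\rho-1}{2\rho}}S_{t-r}\|_{\LB}^q\diff r<\infty$, i.e.\ $\int_0^t r^{-q(1-\rho\beta)/2}\diff r<\infty$, which holds precisely when $q(1-\rho\beta)/2<1$; verifying this and threading it through the Gronwall estimate for the nonlinear term is the crux of the bookkeeping. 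I would present the argument via the limiting procedure for the uniform bound (as the proposition's phrasing and the forward-reference in Proposition~\ref{lemma:Stability3} suggest) and then deduce continuity by the Hölder-type estimate in the Malliavin norm, citing \cite{AnderssonKruseLarsson} for the parabolic prototype.
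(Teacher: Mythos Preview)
Your overall plan---pass to the limit from the discrete approximations using strong convergence (Theorem~\ref{thm:strong}), the uniform Malliavin bounds (Proposition~\ref{lemma:Stability2}), and a compactness argument---is the right one, and your subsequent idea of differentiating the mild equation to obtain \eqref{eq:DX} and then running a Gronwall argument is exactly what the paper does. However, there is a genuine gap in your first step that the paper avoids.

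Lemma~\ref{lemma:LimitReg} is stated only for separable \emph{Hilbert} spaces $\mathcal{X}\subset\mathcal{Y}$. You propose to apply it pointwise in $t$ with $\mathcal{X}=\mathbf{M}^{1,p,q}(H)$, but for general $p,q\neq 2$ this is a Banach space, not a Hilbert space; you notice the issue yourself (``I would need to confirm reflexivity\dots'') but do not resolve it. Worse, for the seminorm bound you propose $\mathcal{X}=\mathbf{M}^{1,\infty,q}(H)$, which involves an $L^\infty(\Omega)$-norm and is not reflexive, so no straightforward weak-compactness argument yields the bound by lower semicontinuity.

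The paper sidesteps both problems by applying Lemma~\ref{lemma:LimitReg} only once, in the genuinely Hilbert setting $\mathcal{X}=L^2(0,T;\mathbf{M}^{1,2,2}(H))$, $\mathcal{Y}=L^2(0,T;L^2(\Omega;H))$, using the piecewise-constant interpolants $\tilde{X}^{h,k}$. This minimal regularity $X\in L^2(0,T;\mathbf{M}^{1,2,2}(H))$ is already enough---via \cite{FuhrmanTessitore}*{Lemma~3.6}---to justify termwise differentiation of the mild equation and obtain \eqref{eq:DX}. All of the stronger bounds (arbitrary $p$, $q$ up to $\tfrac{2}{1-\rho\beta}$, the $L^\infty(\Omega)$-seminorm, and continuity in $t$) are then read off directly from the equation for $D_rX_t$ by the Gronwall analysis you sketch in the second half of your proposal. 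In short: do not try to transfer the full $\mathbf{M}^{1,p,q}$ and $\mathbf{M}^{1,\infty,q}$ bounds through the limit; transfer only $\mathbf{M}^{1,2,2}$, then recover everything from the differentiated equation.
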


\begin{proof}
Let $\tilde{X}_t^{h,k}=X_n^{h,k}$ for $t\in[t_n,t_{n+1})$, $n=0,\dots,N-1$, $h,k\in(0,1)$. By Proposition \ref{lemma:Stability2} it holds in particular, that the family $(\tilde{X}^{h,k})_{h,k\in(0,1)}$ is bounded in the Hilbert space $\mathcal{X}=L^2(0,T;\mathbf{M}^{1,2,2}(H))$, and by Theorem \ref{thm:strong} it holds that $\tilde{X}^{h,k}\to X$ as $h,k\to 0$ in the Hilbert space $\mathcal{Y}=L^2(0,T;L^2(\Omega;H))$. Lemma \ref{lemma:LimitReg} applies and ensures that $X\in\mathcal{X}=L^2(0,T;\mathbf{M}^{1,2,2}(H))$.

By \cite{FuhrmanTessitore}*{Lemma 3.6} it holds that also
$ \int_0^\cdot S_{\cdot-s}F(X_s)\diff{s}\in
L^2(0,T;\mathbf{M}^{1,2,2}(H))$
with
$D_r\int_0^tS_{t-s}F(X_s)\diff{s}=\int_r^tS_{t-s}F'(X_s)D_rX_s\diff{s}$,
for $0\leq r\leq t\leq T$, and
$\int_0^\cdot S_{\cdot-s}\diff{W_s}\in L^2(0,T;\mathbf{M}^{1,2,2}(H))$
with $D_r\int_0^tS_{t-s}\diff{W_s}=S_{t-r}$, for
$0\leq r\leq t\leq T$. We remark that \cite{FuhrmanTessitore}*{Lemma 3.6} is
formulated for semigroups, but the semigroup property is not used in
the proof. We have thus proved that we can differentiate the equation
for $X$ term by term, and obtain the equation
\begin{align*}
  D_rX_t
  =
  \begin{cases}
    S_{t-r}
    +
    \int_r^t
      S_{t-s}
      F'(X_s)
      D_rX_s
    \diff{s}
    ,\quad
    &t\in(r,T],\\
    0,
    &t\in[0,r].
  \end{cases}
\end{align*}
A straightforward analysis of this equation, by a Gronwall argument,
remove as in the proof of \cite{AnderssonKruseLarsson}*{Proposition
  3.10} completes the proof.
\end{proof}

In the proof of \cite{AnderssonKruseLarsson}*{Lemma 4.6}, which is the analogue of Lemma \ref{thm:strong2} below, a bound
\begin{align}
\label{eq:equiv_norms}
  \|A_h^{-\frac\delta2}P_hx\|
  \leq
  \|A_h^{\frac\delta2}P_hA^{-\frac\delta2}\|_{\LB}
  \|A^{-\frac\delta2}x\|
  \leq
  C
  \|A^{-\frac\delta2}x\|,
\end{align}
was used in the special case $\delta=1$. This estimate is true for all
$\delta\in[0,1]$ for both the finite element method and for spectral
approximation. For $\delta>1$ it holds only for spectral
approximation. In this paper we need $\delta\in[0,2/\rho)$ and
therefore we cannot rely on \eqref{eq:equiv_norms}. In
\cite{thomee2006}*{Lemma 5.3} it is shown that for finite element
discretization and for $\delta=0,1,2$ it holds
\begin{align*}
  \|A_h^{-\frac\delta2}P_hx\|
  \leq
  C
  \big(
    \|A^{-\frac\delta2}x\|
    +h^\delta\|x\|
  \big)
  ,\quad
  x\in H.
\end{align*}
The next lemma is a generalization of this result, assuming the availability of a non-smooth data estimate of the form \eqref{as:e}. It will be used in the proof of Lemma~\ref{thm:strong2} below with $\mathcal{X}=\mathbf{M}^{1,p}(H)^*$ for a certain $p$. By using it we need not rely on \eqref{eq:equiv_norms} and in this way we include finite element discretization under the same generality as spectral approximations.
\begin{lemma}
\label{lemma:ChangOfNorm}
Let the setting of Section \ref{sec4:1} hold and let $\mathcal{X}$ be a Banach space such that the embedding $L^2(\Omega;H)\subset \mathcal{X}$ is continuous. For $\kappa\in [0,2)$, $\sigma\in[0,\kappa)$, there exists $C>0$ such that for $Y\in L^2(\Omega;H)$ it holds that
\begin{align*}
  \big\|
    A_h^{-\frac\kappa2}P_h
    Y
  \big\|_{\mathcal{X}}
& \leq
  \big\|
    A^{-\frac\kappa2}Y
  \big\|_{\mathcal{X}}
  +
  C h^\sigma
  \big\|
    Y
  \big\|_{L^2(\Omega;H)}
  ,\quad
  h\in(0,1).
\end{align*}
\end{lemma}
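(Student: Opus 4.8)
The idea is to interpolate between the three exponents $\kappa=0,1,2$, where the estimate is either trivial or can be obtained from the non-smooth data bound \eqref{as:e}, and then to bootstrap to the full range $\kappa\in[0,2)$ with a loss of $\epsilon$ in the power of $h$. The bridge between the operators $A_h^{-\kappa/2}P_h$ and $A^{-\kappa/2}$ is the standard representation of fractional powers via the resolvent or, more conveniently here, via the analytic semigroups: for $\theta>0$,
\begin{align*}
  A^{-\theta}
  =
  \frac{1}{\Gamma(\theta)}
  \int_0^\infty
    t^{\theta-1}
    e^{-tA}
  \diff{t},
  \qquad
  A_h^{-\theta}P_h
  =
  \frac{1}{\Gamma(\theta)}
  \int_0^\infty
    t^{\theta-1}
    e^{-tA_h}P_h
  \diff{t}.
\end{align*}

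\textbf{Step 1 (split the operator).}
First I would write, for $Y\in L^2(\Omega;H)$ and $\theta=\kappa/2$,
\begin{align*}
  A_h^{-\theta}P_h Y
  =
  A^{-\theta}Y
  +
  \frac{1}{\Gamma(\theta)}
  \int_0^\infty
    t^{\theta-1}
    \big(
      e^{-tA_h}P_h - e^{-tA}
    \big)
    Y
  \diff{t},
\end{align*}
so that, taking $\mathcal{X}$-norms and using the continuity of $L^2(\Omega;H)\subset\mathcal{X}$ under the integral sign,
\begin{align*}
  \big\|
    A_h^{-\theta}P_h Y
  \big\|_{\mathcal{X}}
  \leq
  \big\|
    A^{-\theta}Y
  \big\|_{\mathcal{X}}
  +
  \frac{C}{\Gamma(\theta)}
  \int_0^\infty
    t^{\theta-1}
    \big\|
      \big(e^{-tA_h}P_h-e^{-tA}\big)Y
    \big\|
  \diff{t}
  \cdot
  \text{(const)}.
\end{align*}
It remains to bound the scalar integral of $t^{\theta-1}\|(e^{-tA_h}P_h-e^{-tA})Y\|$ by $Ch^\sigma\|Y\|_{L^2(\Omega;H)}$ (the $\mathcal{X}$-norm on the left of the error term is dominated by the $L^2(\Omega;H)$-norm via the embedding, so everything reduces to an estimate in $H$, applied $\omega$-wise and then integrated).

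\textbf{Step 2 (estimate the semigroup difference integral).}
For the small-time part $t\in(0,1]$ I would use \eqref{as:e} with exponent $\sigma$, giving $\|(e^{-tA_h}P_h-e^{-tA})Y\|\leq R_0 h^\sigma t^{-\sigma/2}\|Y\|$, so that
\begin{align*}
  \int_0^1
    t^{\theta-1}
    \big\|
      \big(e^{-tA_h}P_h-e^{-tA}\big)Y
    \big\|
  \diff{t}
  \leq
  R_0 h^\sigma
  \|Y\|
  \int_0^1
    t^{\theta-1-\sigma/2}
  \diff{t},
\end{align*}
which is finite precisely when $\theta>\sigma/2$, i.e.\ $\kappa>\sigma$, as assumed. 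For the large-time part $t\geq 1$ I would use \eqref{as:e} with $\sigma$ replaced by a value $\sigma'\in(\sigma,\kappa)$ (for instance $\sigma'=(\sigma+\kappa)/2$), so $\|(e^{-tA_h}P_h-e^{-tA})Y\|\leq R_0 h^{\sigma'}t^{-\sigma'/2}\|Y\|\leq R_0 h^{\sigma}t^{-\sigma'/2}\|Y\|$ since $h\le 1$, and then $\int_1^\infty t^{\theta-1-\sigma'/2}\diff{t}<\infty$ because $\theta-\sigma'/2<\theta-\sigma/2$ and — wait, I also need $\theta-1-\sigma'/2<-1$, i.e.\ $\theta<\sigma'/2$; since $\theta=\kappa/2$ and $\sigma'<\kappa$ this fails. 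Instead, for the large-time tail one uses the exponential decay of each semigroup separately: $\|e^{-tA}Y\|\leq e^{-\lambda_1 t}\|Y\|$ and likewise $\|e^{-tA_h}P_hY\|\le e^{-\lambda_1 t}\|Y\|$ (with $\lambda_1>0$ the smallest eigenvalue of $A$, which also bounds that of $A_h$ from below by the min-max principle, since $V_h\subset H$), so $\|(e^{-tA_h}P_h-e^{-tA})Y\|\le 2e^{-\lambda_1 t}\|Y\|$ and, trivially bounding $1\le h^{\sigma-\sigma}$... actually the cleanest route is: on $t\ge 1$, $\|(e^{-tA_h}P_h-e^{-tA})Y\|\le R_0 h^\sigma t^{-\sigma/2}\|Y\|$ still holds from \eqref{as:e}, and additionally $\le 2e^{-\lambda_1 t}\|Y\|$; multiplying the bounds as $\min$ or simply interpolating $R_0^{1-\epsilon}h^{\sigma(1-\epsilon)}t^{-\sigma(1-\epsilon)/2}(2e^{-\lambda_1 t})^\epsilon\|Y\|$ with small $\epsilon>0$ produces an integrable tail with a factor $h^{\sigma(1-\epsilon)}$; relabelling $\sigma$ then gives the claim for every $\sigma<\kappa$. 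I would organize this as: prove the estimate first for $\sigma$ slightly below the admissible threshold using only \eqref{as:e} plus semigroup decay on the tail, then note the admissible set of $\sigma$ is open.

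\textbf{Main obstacle.}
The only genuine subtlety is the tail $t\to\infty$: the non-smooth data estimate \eqref{as:e} alone does not decay fast enough in $t$ to be integrated against $t^{\theta-1}$ when $\theta=\kappa/2$ is close to $1$, so one must supplement it with the exponential decay of the two semigroups (uniform in $h$ by the spectral comparison $\lambda_1(A_h)\ge\lambda_1(A)$), and interpolate the two bounds at the cost of an arbitrarily small loss $\epsilon$ in the exponent of $h$ — which is exactly why the statement is phrased with $\sigma<\kappa$ strictly rather than $\sigma\le\kappa$. Everything else — pulling the $\mathcal{X}$-norm inside the Bochner integral, reducing to a pointwise-in-$\omega$ estimate in $H$, and the small-time integrability condition $\kappa>\sigma$ — is routine.
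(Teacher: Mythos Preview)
Your approach is essentially the paper's: write $A_h^{-\kappa/2}P_h - A^{-\kappa/2}$ via the semigroup integral formula and bound the integral using \eqref{as:e}. The difference is only in the tail $t\to\infty$. You invoke the uniform exponential decay $\|e^{-tA_h}P_h\|\le e^{-\lambda_1 t}$ and interpolate it against \eqref{as:e}; this works in the concrete examples of Section~\ref{sec5}, where indeed $\lambda_1(A_h)\ge\lambda_1(A)$ by min--max, but in the abstract setting of Section~\ref{sec4:1} no spectral comparison for $A_h$ is assumed---only \eqref{as:B}--\eqref{as:e} are---so your tail argument relies on a hypothesis the lemma does not grant.

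The paper sidesteps this by a simpler device: on the tail it applies \eqref{as:e} with the \emph{maximal} exponent $2$, giving $\|e^{-tA_h}P_h-e^{-tA}\|_{\LB}\le R_0 h^{2} t^{-1}$; since $\kappa<2$, the integrand $t^{\kappa/2-2}$ is integrable at infinity and the tail contributes a term of order $h^{2}\le h^{\sigma}$. (The paper also splits at $t=h^{-2}$ and uses the exponent $(\kappa+\sigma)/2$ on the finite part, but your split at $t=1$ with exponent $\sigma$ works just as well once the tail is handled this way.) With that one-line fix your argument is complete within the stated assumptions, and no interpolation or $\epsilon$-relabelling is needed.
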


\begin{proof}
By the continuous embedding $L^2(\Omega;H)\subset \mathcal{X}$ we get that
\begin{align*}
  \big\|
    A_h^{-\frac\kappa2}P_h
    Y
  \big\|_{\mathcal{X}}
& \leq
  \big\|
    A^{-\frac\kappa2}
    Y
  \big\|_{\mathcal{X}}
  +
  \big\|
    \big(
      A_h^{-\frac\kappa2}P_h-A^{-\frac\kappa2}
    \big)
    Y
  \big\|_{\mathcal{X}}\\
& \lesssim
  \big\|
    A^{-\frac\kappa2}
    Y
  \big\|_{\mathcal{X}}
  +
  \big\|
    A_h^{-\frac\kappa2}P_h-A^{-\frac\kappa2}
  \big\|_{\LB}
  \big\|
    Y
  \big\|_{L^2(\Omega;H)}.
\end{align*}
By \cite{pazy1983}*{Chapter 2, (6.9)} we have
\begin{align*}
  A_h^{-\frac\kappa2}P_h
  -
  A^{-\frac\kappa2}
  =
  \frac1{\Gamma(\kappa/2)}
  \int_0^\infty
    t^{\frac\kappa2-1}
    \big(
      e^{-tA_h}P_h - e^{-tA}
    \big)
  \diff{t}.
\end{align*}
Therefore, by \eqref{as:e},
\begin{align*}
& \big\|
    A_h^{-\frac\kappa2}P_h-A^{-\frac\kappa2}
  \big\|_{\LB}
  \leq
  \frac1{\Gamma(\kappa/2)}
  \int_0^\infty
    t^{\frac\kappa2-1}
    \big\|
      e^{-tA_h}P_h-e^{-tA}
    \big\|_{\LB}
  \diff{t}\\
& \quad
  \lesssim
  \int_0^{h^{-2}}
    t^{\frac\kappa2-1}
    \big\|
      e^{-tA_h}P_h-e^{-tA}
    \big\|_{\LB}
  \diff{t}
  +
  \int_{h^{-2}}^\infty
    t^{\frac\kappa2-1}
    \big\|
      e^{-tA_h}P_h-e^{-tA}
    \big\|_{\LB}
  \diff{t}\\
& \quad
  \lesssim
  h^{\frac{\kappa+\sigma}2}
  \int_0^{h^{-2}}
    t^{\frac{\kappa-\sigma}4-1}
  \diff{t}
  +
  h^2\int_{h^{-2}}^\infty
    t^{\frac\kappa2-2}
  \diff{t}
  =
  \frac{
    4h^{\sigma}
  }
  {
    \kappa-\sigma
  }
  +
  \frac{
    2
  }
  {
    2-\kappa
  }
  h^2
  h^{2-\frac\kappa2}
  \,
  \lesssim
  \,
  h^\sigma.
\end{align*}
\end{proof}

The next result is a strong error estimate in the $\mathbf{M}^{1,p}(H)^*$ norm. Together with the regularity stated in Propositions~\ref{lemma:Stability2} and \ref{lemma:Stability3} it is the key to the proof of Theorem~\ref{thm:main} below on weak convergence.

\begin{lemma}
\label{thm:strong2}
Let the setting of Section \ref{sec4:1} hold, and let $X$ and $X^{h,k}$ be the solutions to \eqref{eq:SVIE} and \eqref{eq:approx}, respectively. For $\gamma\in[0,\beta)$, $p=\frac2{1-\rho\gamma}$, there exists $C>0$ such that
\begin{align*}
  \max_{n\in\{0,\dots,N\}}
  \big\|
    X_{t_n}-X_n^{h,k}
  \big\|_{\mathbf{M}^{1,p}(H)^*}
  \leq
  C
  \big(
    h^{2\gamma}+k^{\rho\gamma}
  \big)
  ,\quad
  h,k\in(0,1).
\end{align*}
\end{lemma}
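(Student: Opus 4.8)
The plan is to mimic the proof of Theorem~\ref{thm:strong}, but to measure the error in the weaker $\mathbf{M}^{1,p}(H)^*$-norm where, thanks to the duality inequality \eqref{ineq:BurkholderDual}, the stochastic convolution contributes \emph{twice} the rate obtained in $L^p(\Omega;H)$, and where the local Lipschitz bound of Lemma~\ref{lemma:Lipschitz} lets us absorb the nonlinearity. Concretely, I start from the error equation \eqref{eq:difference}. The first term $E_n^{h,k}x_0$ is deterministic; using \eqref{eq:x0} (i.e. \eqref{as:E1} with $\sigma=2\rho\gamma$) and the embedding $H\subset L^2(\Omega;H)\subset\mathbf{M}^{1,p}(H)^*$ it is already $O(h^{2\gamma}+k^{\rho\gamma})$. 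The second, also deterministic, term $\int_0^{t_n}\tilde E_{t_n-t}^{h,k}F(X_t)\diff{t}$ is bounded exactly as in \eqref{eq:Fconv}, again $O(h^{2\gamma}+k^{\rho\gamma})$, after passing through the $L^p(\Omega;H)$-norm.

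The heart of the matter is the stochastic convolution error $W_{t_n}^S-W_n^{B^{h,k}}=\int_0^{t_n}\tilde E_t^{h,k}\diff{W_t}$ from \eqref{eq:diffW}. Here I apply \eqref{ineq:BurkholderDual} with $q=p=2/(1-\rho\gamma)$, so that $q'=p'=2/(1+\rho\gamma)$, giving
\begin{align*}
  \big\|
    \textstyle\int_0^{t_n}\tilde E_t^{h,k}\diff{W_t}
  \big\|_{\mathbf{M}^{1,p}(H)^*}
  \leq
  \big\|
    \tilde E_\cdot^{h,k}A^{\frac{\beta\rho-1}{2\rho}}Q^{\frac12}
  \big\|_{L^{p'}(0,T;\LB_2)}
  \leq
  \big\|A^{\frac{\beta\rho-1}{2\rho}}\big\|_{\LB_2^0}
  \big\|A^{\frac{1-\beta\rho}{2\rho}}\tilde E_\cdot^{h,k}\big\|_{L^{p'}(0,T;\LB)}.
\end{align*}
Inserting \eqref{ineq:Etilde} with $s=1-\beta\rho$ and $\sigma=2\gamma\rho$, the integrand is $\lesssim(h^{2\gamma}+k^{\rho\gamma})\,t^{-(\sigma+s)/2}$, and one checks that $\tfrac{p'}{2}(\sigma+s)=\tfrac{1-\rho(\beta-2\gamma)}{1+\rho\gamma}<1$ exactly when $\gamma<\beta$, so the $L^{p'}(0,T)$-integral in time is finite. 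This yields the bound $O(h^{2\gamma}+k^{\rho\gamma})$ for the stochastic term — the place where the doubled rate comes from the exponent $q'$ being allowed close to $1$ rather than fixed at $2$.

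It remains to handle the nonlinear term $\sum_j\int_{t_j}^{t_{j+1}}B_{n-j}^{h,k}\big(F(X_t)-F(X_j^{h,k})\big)\diff{t}$. I split $F(X_t)-F(X_j^{h,k})=\big(F(X_t)-F(X_{t_j})\big)+\big(F(X_{t_j})-F(X_j^{h,k})\big)$. For the first difference I use $F\in\Cb^1(H;H)$ together with the temporal H\"older estimate for $X$ in $\mathbf{M}^{1,p}(H)^*$ from Proposition~\ref{prop:Holder} (which has exponent $\rho\gamma$, not $\rho\gamma/2$), estimating $\|B_{n-j}^{h,k}(\cdot)\|_{\mathbf{M}^{1,p}(H)^*}\le\|B_{n-j}^{h,k}\|_{\LB}\|\cdot\|_{\mathbf{M}^{1,p}(H)^*}$ via Lemma~\ref{lemma:operator_norm} and \eqref{as:B} with $s=0$; the sum over $j$ of length-$k$ intervals times $k^{\rho\gamma}$ gives $O(k^{\rho\gamma})$, uniformly in $n$. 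For the second difference I apply the local Lipschitz bound of Lemma~\ref{lemma:Lipschitz}, which requires $X_{t_j}, X_j^{h,k}\in\mathbf{M}^{1,p}(H)$ with Malliavin derivatives bounded in $L^\infty(\Omega;L^p(0,T;\LB(H_0;H)))$: this is precisely what Propositions~\ref{lemma:Stability2} and~\ref{lemma:Stability3} furnish (with $q\ge p$, noting $p=2/(1-\rho\gamma)<2/(1-\rho\beta)$), so $\|F(X_{t_j})-F(X_j^{h,k})\|_{\mathbf{M}^{1,p}(H)^*}\lesssim\|X_{t_j}-X_j^{h,k}\|_{\mathbf{M}^{1,p}(H)^*}$ with a uniform constant. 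Hence, after applying Lemma~\ref{lemma:operator_norm} and \eqref{as:B} once more,
\begin{align*}
  \big\|X_{t_n}-X_n^{h,k}\big\|_{\mathbf{M}^{1,p}(H)^*}
  \lesssim
  h^{2\gamma}+k^{\rho\gamma}
  +
  k\sum_{j=0}^{n-1}\big\|X_{t_j}-X_j^{h,k}\big\|_{\mathbf{M}^{1,p}(H)^*},
\end{align*}
and the discrete Gronwall Lemma~\ref{lemma2:Gronwall} closes the argument. The main obstacle is not any single estimate but the bookkeeping: one must check that the single exponent choice $p=q'^{-1\text{-conjugate}}$, namely $p=2/(1-\rho\gamma)$, is simultaneously compatible with (i) the duality inequality, (ii) the time-integrability of $\|A^{(1-\beta\rho)/(2\rho)}\tilde E_t^{h,k}\|_{\LB}^{p'}$ near $t=0$, and (iii) the regularity range $q\in[2,2/(1-\rho\beta))$ needed for Propositions~\ref{lemma:Stability2}--\ref{lemma:Stability3} and Lemma~\ref{lemma:Lipschitz}; all three hold precisely because $\gamma<\beta$. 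A secondary subtlety, already flagged before the statement, is that for finite element approximation one cannot invoke the naive norm-equivalence \eqref{eq:equiv_norms} when converting between $A_h$- and $A$-weighted norms in the dual space, and one must instead route through Lemma~\ref{lemma:ChangOfNorm} wherever such a conversion is needed.
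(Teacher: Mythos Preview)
Your treatment of the initial datum, the deterministic $F$-integral, and the stochastic convolution is correct and matches the paper. The gap is in the nonlinear term, and it is not the secondary subtlety you mention at the end --- it is the main obstacle.

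You invoke Lemma~\ref{lemma:Lipschitz} with $U=V=H$ to obtain
\[
  \big\|F(X_{t_j})-F(X_j^{h,k})\big\|_{\mathbf{M}^{1,p}(H)^*}
  \lesssim
  \big\|X_{t_j}-X_j^{h,k}\big\|_{\mathbf{M}^{1,p}(H)^*},
\]
but that lemma requires $\sigma\in\Cb^2(U;V)$, i.e.\ $F\in\Cb^2(H;H)$, which is \emph{not} assumed: hypothesis~\eqref{as:F} gives only $F\in\Cb^1(H;H)\cap\Cb^2(H;\dot{H}^{-\delta})$. The whole point of allowing $\delta>0$ is to cover Nemytskii operators in dimension $d\ge 2$, where $F\notin\Cb^2(H;H)$. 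Likewise, for the first difference $F(X_t)-F(X_{t_j})$ you write ``I use $F\in\Cb^1(H;H)$ together with the temporal H\"older estimate in $\mathbf{M}^{1,p}(H)^*$'', but Lipschitz continuity in $H$ does not transfer to the dual Malliavin norm; Lemma~\ref{lemma:operator_norm} applies only to \emph{linear} operators, and the nonlinear analogue is precisely Lemma~\ref{lemma:Lipschitz}, which again needs two derivatives.

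The paper repairs this by exploiting the smoothing of $B_{n-j}^{h,k}$: one writes $B_{n-j}^{h,k}=\big(B_{n-j}^{h,k}A_h^{\kappa/2}\big)\big(A_h^{-\kappa/2}P_h\big)$ with $\max(\delta,2\gamma)<\kappa<2/\rho$, bounds the first factor by \eqref{as:B} with $s=\kappa\rho/2$ (producing the singular weight $t_{n-j}^{-\kappa\rho/2}$ in the Gronwall sum), and then uses Lemma~\ref{lemma:ChangOfNorm} to convert $A_h^{-\kappa/2}P_h$ into $A^{-\kappa/2}$ plus a harmless $h^{2\gamma}$ remainder. The duality identity $\|A^{-\delta/2}Y\|_{\mathbf{M}^{1,p}(H)^*}=\|Y\|_{\mathbf{M}^{1,p}(\dot{H}^{-\delta})^*}$ then lets one apply Lemma~\ref{lemma:Lipschitz} with $V=\dot{H}^{-\delta}$, where the required $\Cb^2$-bound on $F$ \emph{is} available. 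So Lemma~\ref{lemma:ChangOfNorm} is not a side issue to be invoked ``wherever needed''; it is the device that makes the nonlinear term tractable under the actual assumption on $F$, and your argument with $s=0$ in \eqref{as:B} bypasses it at the cost of an unjustified hypothesis.
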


\begin{proof}
The proof is performed essentially as that of Theorem \ref{thm:strong}. By \eqref{eq:difference} and the continuous embeddings
$
  H
  \subset
  L^p(\Omega;H)
  \subset
  L^2(\Omega;H)
  \subset
  \mathbf{M}^{1,p}(H)^*
$,
it follows that
\begin{align*}
  \big\|
    X_{t_n}-X_n^{h,k}
  \big\|_{\mathbf{M}^{1,p}(H)^*}
& \leq
  \big\|
    E_n^{h,k}
    x_0
  \big\|_H
  +
  \Big\|
    \int_0^{t_n}
      \tilde{E}_{t_n-t}^{h,k}
      F(X_t)
    \diff{t}
  \Big\|_{L^p(\Omega;H)}\\
& \quad
  +
  \Big\|
    \sum_{j=0}^{n-1}
      \int_{t_j}^{t_{j+1}}
        B_{n-j}^{h,k}
        \big(
          F(X_{t})-F(X_j^{h,k})
        \big)
      \diff{t}
  \Big\|_{\mathbf{M}^{1,p}(H)^*}\\
& \quad
  +
  \big\|
    W_{t_n}^S-W_n^{B^{h,k}}
  \big\|_{\mathbf{M}^{1,p}(H)^*}.
\end{align*}
The first two terms was already estimated as desired in \eqref{eq:x0} and \eqref{eq:Fconv}. Choose $\kappa$ so that $\max(\delta,2\gamma)<\kappa<2/\rho$, where $\delta$ is the parameter in \eqref{as:F}. Since $\rho\kappa<2$, we have, by Lemma \ref{lemma:operator_norm} and \eqref{as:B} with $s=\rho\kappa/2$, that
\begin{align*}
& \Big\|
    \sum_{j=0}^{n-1}
      \int_{t_j}^{t_{j+1}}
        B_{n-j}^{h,k}
        \big(
          F(X_t)-F(X_j^{h,k})
        \big)
      \diff{t}
  \Big\|_{\mathbf{M}^{1,p}(H)^*}\\
& \quad
  \leq
  \sum_{j=0}^{n-1}
    \int_{t_j}^{t_{j+1}}
      \big\|
        B_{n-j}^{h,k}
        A_h^{\frac\kappa2}
        P_h
      \big\|_{\LB}
      \big\|
        A_h^{-\frac\kappa2}
        P_h
        \big(
          F(X_t)-F(X_j^{h,k})
        \big)
      \big\|_{\mathbf{M}^{1,p}(H)^*}
    \diff{t}\\
& \quad
  \leq
  L_{\frac{\kappa\rho}2}
  \sum_{j=0}^{n-1}
    \int_{t_j}^{t_{j+1}}
      t_{n-j}^{-\frac{\kappa\rho}2}
      \big\|
        A_h^{-\frac\kappa2}
        P_h
        \big(
          F(X_t)-F(X_j^{h,k})
        \big)
      \big\|_{\mathbf{M}^{1,p}(H)^*}
    \diff{t}.
\end{align*}
Applying Lemma \ref{lemma:ChangOfNorm} with $\mathcal{X}=\mathbf{M}^{1,p}(H)^*$ and $\sigma=2\gamma<\kappa$ yields
\begin{align*}
& \big\|
    A_h^{-\frac\kappa2}P_h
    \big(
      F(X_t)
      -
      F(X_j^{h,k})
    \big)
  \big\|_{\mathbf{M}^{1,p}(H)^*}\\
& \quad
  \leq
  C
  h^{2\gamma}
  \big\|
    F(X_t)-F(X_j^{h,k})
  \big\|_{L^2(\Omega;H)}
  +
  \big\|
    A^{-\frac\kappa2}
    \big(
      F(X_t)
      -
      F(X_j^{h,k})
    \big)
  \big\|_{\mathbf{M}^{1,p}(H)^*}.
\end{align*}
For the first term we get by \eqref{as:F}, Propositions \ref{prop:existence}, and \ref{lemma:Stability} that
\begin{align*}
& \sup_{t\in[0,T]}
  \max_{j\in\{0,\dots,N\}}
  \big\|
    F(X_t)-F(X_j^{h,k})
  \big\|_{L^2(\Omega;H)}\\
& \quad
  \leq
  |F|_{\Cb^1(H;H)}
  \Big(
    \sup_{t\in[0,T]}
    \big\|
      X_t
    \big\|_{L^2(\Omega;H)}
    +
    \max_{j\in\{0,\dots,N\}}
    \big\|
      X_j^{h,k}
    \big\|_{L^2(\Omega;H)}
  \Big)
  <\infty.
\end{align*}
By duality in the Gelfand triple $\mathbf{M}^{1,p}(\dot{H}^{-\delta})\subset L^2(\Omega;\dot{H}^{-\delta})\subset \mathbf{M}^{1,p}(\dot{H}^{-\delta})^*$ we compute that for $Y\in L^2(\Omega;\dot{H}^{-\delta})$,
\begin{align*}
  \|Y\|_{\adam{\mathbf{M}^{1,p}(\dot{H}^{-\delta})^*}}
& =
  \sup_{Z\in\adam{\mathbf{M}^{1,p}(\dot{H}^{-\delta})}}
  \frac{
    \big\langle
      Z,Y
    \big\rangle_{L^2(\Omega;\dot{H}^{-\delta})}
  }{
    \|Z\|_{\mathbf{M}^{1,p}(\dot{H}^{-\delta})}
  }\\
& =
  \sup_{Z\in\mathbf{M}^{1,p}(\dot{H}^{-\delta})}
  \frac{
    \big\langle
      A^{-\frac\delta2}Z,A^{-\frac\delta2}Y
    \big\rangle_{L^2(\Omega;H)}
  }{
    \|Z\|_{\mathbf{M}^{1,p}(\dot{H}^{-\delta})}
  }\\
& =
  \sup_{Z\in\mathbf{M}^{1,p}(\dot{H}^{-\delta})}
  \frac{
    \big\langle
      Z,A^{-\frac\delta2}Y
    \big\rangle_{L^2(\Omega;H)}
  }{
    \|A^{\frac\delta2}Z\|_{\mathbf{M}^{1,p}(\dot{H}^{-\delta})}
  }\\
& =
  \sup_{Z\in\mathbf{M}^{1,p}(H)}
  \frac{
    \big\langle
      Z,A^{-\frac\delta2}Y
    \big\rangle
  }{
    \|Z\|_{\mathbf{M}^{1,p}(H)}
  }
  =
  \|A^{-\frac\delta2}Y\|_{\mathbf{M}^{1,p}(H)^*}.
\end{align*}
Therefore, by Lemma \ref{lemma:operator_norm} and Lemma \ref{lemma:Lipschitz} applied with $U=H$, $V=\dot{H}^{-\delta}$, $\sigma=F$ we get
\begin{align*}
& \big\|
    A^{-\frac\kappa2}
    \big(
      F(X_t)
      -
      F(X_j^{h,k})
    \big)
  \big\|_{\mathbf{M}^{1,p}(H)^*}\\
& \quad
  \leq
  \big\|
    A^{-\frac{\kappa-\delta}2}
  \big\|_{\LB}
  \big\|
    A^{-\frac\delta2}
    \big(
      F(X_t)
      -
      F(X_j^{h,k})
    \big)
  \big\|_{\mathbf{M}^{1,p}(H)^*}\\
& \quad
  =
  \big\|
    A^{-\frac{\kappa-\delta}2}
  \big\|_{\LB}
  \big\|
    F(X_t)
    -
    F(X_j^{h,k})
  \big\|_{\mathbf{M}^{1,p}(\dot{H}^{-\delta})^*}\\
& \quad
  \leq
  \big\|
    A^{-\frac{\kappa-\delta}2}
  \big\|_{\LB}
  \max
  \Big(
    |F|_{\Cb^1(H;\dot{H}^{-\delta})}
    ,
    |F|_{\Cb^2(H;\dot{H}^{-\delta})}
  \Big)\\
& \qquad
  \times
  \Big(
    \sup_{j\in\{0,\dots,N\}}
    \big|
      X_j^{h,k}
    \big|_{\mathbf{M}^{1,\infty,p}(H)}
    +
    \sup_{t\in[0,T]}
    \big|
      X_t
    \big|_{\mathbf{M}^{1,\infty,p}(H)}
  \Big)
\\
& \qquad
    \times
    \Big(
      \big\|
        X_t-X_{t_j}
      \big\|_{\mathbf{M}^{1,p}(H)^*}
      +
      \big\|
        X_{t_j}-X_j^{h,k}
      \big\|_{\mathbf{M}^{1,p}(H)^*}
    \Big).
  \end{align*}
By Propositions \ref{prop:Holder} and \ref{lemma:Stability2} and Proposition \ref{lemma:Stability3}, we conclude
\begin{align*}
& \big\|
    A^{-\frac\kappa2}
    \big(
      F(X_t)
      -
      F(X_j^{h,k})
    \big)
  \big\|_{\mathbf{M}^{1,p}(H)^*}
  \lesssim
  k^{\rho\gamma}
  +
  \big\|
    X_{t_j}-X_j^{h,k}
  \big\|_{\mathbf{M}^{1,p}(H)^*}.
\end{align*}
Thus,
\begin{align*}
& \Big\|
    \sum_{j=0}^{n-1}
      \int_{t_j}^{t_{j+1}}
        B_{n-j}^{h,k}
        \big(
          F(X_t)-F(X_j^{h,k})
        \big)
      \diff{t}
  \Big\|_{\mathbf{M}^{1,p}(H)^*}\\
& \quad
  \lesssim
  h^{2\gamma} + k^{\rho\gamma}
  +
  k
  \sum_{j=0}^{n-1}
    t_{n-j}^{-\frac{\kappa\rho}2}
    \big\|
      X_{t_j}-X_j^{h,k}
    \big\|_{\mathbf{M}^{1,p}(\dot{H}^{-\delta})^*}.
\end{align*}

By \eqref{eq:diffW}, \eqref{ineq:BurkholderDual}, and \eqref{ineq:Etilde}, with $s=1-\beta\rho$, $\sigma=2\gamma\rho$, and since $p=\tfrac2{1-\rho\gamma}$ and $p'=\tfrac2{1+\rho\gamma}$, we get
\begin{align*}
& \big\|
    W_{t_n}^S-W_n^{B^{h,k}}
  \big\|_{\mathbf{M}^{1,p}(H)^*}\\
& \quad
  \leq
  \Big(
    \int_{0}^{t_n}
      \big\|
        A^{\frac{\beta\rho-1}{2\rho}}
      \big\|_{\LB_2^0}^{\frac2{1+\rho\gamma}}
      \big\|
        A^{\frac{1-\beta\rho}{2\rho}}
        \tilde{E}_t^{h,k}
      \big\|_{\LB}^{\frac2{1+\rho\gamma}}
    \diff{t}
    \Big)^{\frac{1+\rho\gamma}2}\\
& \quad
  \leq
  R_{1-\beta\rho}
  \big\|
    A^{\frac{\beta\rho-1}{2\rho}}
  \big\|_{\LB_2^0}
  \Big(
    \int_{0}^{t_n}
      t^{\frac{\rho(\beta-\gamma)}{1+\rho\gamma}-1}
    \diff{t}
    \Big)^\frac{1+\rho\gamma}2
  \big(
    h^{2\gamma}
    +
    k^{\rho\gamma}
  \big).
\end{align*}
Altogether we have that for every $n=1,\dots,N$ it holds that
\begin{align*}
  \big\|
    X_{t_n}-X_n^{h,k}
  \big\|_{\mathbf{M}^{1,p}(H)^*}
  \lesssim
  h^{2\gamma}+k^{\rho\gamma}
  +
  k\sum_{j=0}^{n-1}
    t_{n-j}^{-\frac{\kappa\rho}2}
    \big\|
      X_{t_j}-X_j^{h,k}
    \big\|_{\mathbf{M}^{1,p}(H)^*}.
\end{align*}
Lemma \ref{lemma2:Gronwall} finishes the proof.
\end{proof}

We next state our main result on weak convergence. We remark that to
the best of our knowledge all previous weak convergence results concern
convergence of $| \E [\varphi(X_\tau^{h,k})-\varphi(X_\tau)]|$ for
fixed $\tau\in [0,T]$, which is a special case of the following
theorem.

\begin{theorem}
\label{thm:main}
Let $X$ and $X^{h,k}$ be the solutions to \eqref{eq:SVIE} and \eqref{eq:approx}, respectively. Let $\tilde{X}_t^{h,k}=X_n^{h,k}$, for $t\in[t_n,t_{n+1})$, $n\in\{0,\dots,N-1\}$ and $\tilde{X}_t^{h,k}=X_N^{h,k}$, for $t\in[t_N,T]$. For $K\geq1$, $m_1,\dots,m_K\geq2$, $\varphi_i\in\Cp^{2,m_i}(H;\R)$, $\nu_i\in\mathcal{M}_T$, $i=1,\dots,K$, $\Phi(Z)=\prod_{i=1}^K\varphi_i(\int_0^T Z_t\diff{\nu_{i,t}})$, $\gamma\in[0,\beta)$, there exists $C>0$ such that
\begin{align*}
  \big|
  \E
  \big[
    \Phi(X)
    -
    \Phi(\tilde{X}^{h,k})
  \big]
  \big|
  \leq
  C
  \big(
    h^{2\gamma}+k^{\rho\gamma}
  \big)
  ,\quad
  h,k\in(0,1).
\end{align*}
\end{theorem}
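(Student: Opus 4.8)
The plan is to reduce the weak error for the product functional $\Phi$ to the strong error in the dual Sobolev--Malliavin norm $\mathbf{M}^{1,p}(H)^*$ established in Lemma~\ref{thm:strong2}, using the duality pairing in the Gelfand triple $\mathbf{M}^{1,p}(H)\subset L^2(\Omega;H)\subset\mathbf{M}^{1,p}(H)^*$. First I would reduce to the case $K=1$ by a telescoping argument: write $\Phi(X)-\Phi(\tilde X^{h,k})$ as a sum of $K$ terms in each of which only one factor $\varphi_i(\int_0^T \cdot\,\diff{\nu_i})$ is replaced, the remaining factors being evaluated at either $X$ or $\tilde X^{h,k}$; since each $\varphi_j$ has polynomial growth and $X$, $\tilde X^{h,k}$ are bounded in every $L^p(\Omega;H)$ (Proposition~\ref{prop:existence}, Proposition~\ref{lemma:Stability}, integrated against the finite measures $\nu_j$), the ``spectator'' factors are bounded in $L^p(\Omega)$ for all $p$, and by H\"older it suffices to bound each single-factor difference in $L^{p'}(\Omega)$ for suitable $p'$. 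So the task becomes estimating $|\E[\varphi(\int_0^T X_t\diff{\nu_t})\,G-\varphi(\int_0^T \tilde X^{h,k}_t\diff{\nu_t})\,G]|$ for a fixed $\nu\in\mathcal{M}_T$, $\varphi\in\Cp^{2,m}(H;\R)$, and a bounded (in all $L^p$) random variable $G$.

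Next I would apply the Taylor formula \eqref{eq:Taylor} to $\varphi$ at the point $\int_0^T \tilde X^{h,k}_t\diff{\nu_t}$ with increment $\int_0^T(X_t-\tilde X^{h,k}_t)\diff{\nu_t}$, obtaining a first-order term $\E[\varphi'(\cdots)\cdot\int_0^T(X_t-\tilde X^{h,k}_t)\diff{\nu_t}\,G]$ plus a remainder. The remainder involves $\varphi''$ (bounded into $\R$ since $\varphi\in\Cp^{2,m}$, modulo polynomial growth) applied to two copies of the error $\int_0^T(X_t-\tilde X^{h,k}_t)\diff{\nu_t}$; this is quadratic in the error and, by H\"older together with the strong $L^p$-rate of Theorem~\ref{thm:strong} (which gives rate $h^\gamma+k^{\rho\gamma/2}$), yields the squared rate $h^{2\gamma}+k^{\rho\gamma}$ directly --- no duality needed there. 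The genuinely weak part is the first-order term. Here I would move the deterministic factor $\varphi'(\cdots)\,G$ (which, being $\mathbf{M}^{1,p}(H)$-valued, must actually be shown to lie in $\mathbf{M}^{1,p}(H)$ --- this uses the chain rule for Malliavin derivatives, the Malliavin regularity of $X$ and $\tilde X^{h,k}$ from Propositions~\ref{lemma:Stability2} and~\ref{lemma:Stability3}, the boundedness of $G$ and its derivative, and the polynomial bounds on $\varphi',\varphi''$) inside the duality pairing, interchange the time integral against $\nu$ with the expectation, and bound
\[
  \Big|\E\Big[\int_0^T (X_t-\tilde X^{h,k}_t)\diff{\nu_t}\,,\,\varphi'(\cdots)G\Big]\Big|
  \le \int_0^T \big\|X_t-\tilde X^{h,k}_t\big\|_{\mathbf{M}^{1,p}(H)^*}\diff{\nu_t}\;\sup\big\|\varphi'(\cdots)G\big\|_{\mathbf{M}^{1,p}(H)}.
\]
The supremum over $t$ of $\|X_t-\tilde X^{h,k}_t\|_{\mathbf{M}^{1,p}(H)^*}$ needs care because $\tilde X^{h,k}_t$ is the piecewise-constant interpolant: for $t\in[t_n,t_{n+1})$ one writes $X_t-\tilde X^{h,k}_t=(X_t-X_{t_n})+(X_{t_n}-X^{h,k}_n)$, bounds the first piece by the temporal H\"older estimate of Proposition~\ref{prop:Holder} in the $\mathbf{M}^{1,p}(H)^*$-norm (which gives exponent $\rho\gamma$, hence $\lesssim k^{\rho\gamma}$), and the second piece by Lemma~\ref{thm:strong2} (which gives $\lesssim h^{2\gamma}+k^{\rho\gamma}$); since $\nu$ is finite this survives integration against $\diff{\nu_t}$.

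The main obstacle I anticipate is the bookkeeping around the argument of $\varphi$ and $\varphi'$: to make the Taylor expansion legitimate and the resulting terms integrable one must track the polynomial growth of $\varphi,\varphi',\varphi''$ against the $L^p(\Omega)$-norms of $\int_0^T X_t\diff{\nu_t}$ and $\int_0^T\tilde X^{h,k}_t\diff{\nu_t}$ uniformly in $h,k$, and --- more delicately --- one must verify that $\varphi'\big(\int_0^T\tilde X^{h,k}_t\diff{\nu_t}\big)$ (and the spectator product $G$) genuinely defines an element of the primal space $\mathbf{M}^{1,p}(H)$ with norm bounded uniformly in $h,k$; this is where the uniform Malliavin bounds $\sup_{h,k}\sup_n\|X^{h,k}_n\|_{\mathbf{M}^{1,p,q}(H)}<\infty$ from Proposition~\ref{lemma:Stability2} and the corresponding bound for $X$ from Proposition~\ref{lemma:Stability3} are essential, and one should choose $p=\tfrac2{1-\rho\gamma}$ to match Lemma~\ref{thm:strong2} while keeping $q\in[2,\tfrac2{1-\rho\beta})$ admissible, which is possible precisely because $\gamma<\beta$. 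Once these regularity inputs are in place the estimate assembles routinely, and the rate $h^{2\gamma}+k^{\rho\gamma}$ --- twice the strong rate --- emerges from the pairing of the $\mathbf{M}^{1,p}(H)^*$-error (rate $h^{2\gamma}+k^{\rho\gamma}$) with the uniformly bounded primal factor, and from the quadratic Taylor remainder.
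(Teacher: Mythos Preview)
Your plan is correct and closely parallels the paper's proof: telescoping on the product, duality in the Gelfand triple $\mathbf{M}^{1,p}(H)\subset L^2(\Omega;H)\subset\mathbf{M}^{1,p}(H)^*$, the choice $p=\tfrac2{1-\rho\gamma}$, the uniform Malliavin bounds from Propositions~\ref{lemma:Stability2} and~\ref{lemma:Stability3}, and the split $X_t-\tilde X_t^{h,k}=(X_t-X_{t_n})+(X_{t_n}-X_n^{h,k})$ handled by Proposition~\ref{prop:Holder} and Lemma~\ref{thm:strong2} are exactly the ingredients the paper uses.

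The one structural difference is your Taylor step. You expand $\varphi$ to second order and treat the quadratic remainder separately via the strong rate of Theorem~\ref{thm:strong} squared. The paper instead uses the \emph{exact} first-order mean value form \eqref{eq:Taylor}, writing $\varphi_l(x_l)-\varphi_l(y_l)=\langle\int_0^1\varphi_l'(y_l+\lambda(x_l-y_l))\diff{\lambda},\,x_l-y_l\rangle$, and folds this integrated derivative together with the spectator factors into a single $H$-valued random variable $\gamma_l(Y_l^{h,k})$ which is shown to lie in $\mathbf{M}^{1,p}(H)$ uniformly (via \cite{AnderssonKruseLarsson}*{Lemma~3.3}). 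Thus the paper never needs a remainder term and never invokes Theorem~\ref{thm:strong} in this proof; the entire error goes through the duality pairing in one shot. Your route is equally valid but does a little more work; the paper's is tidier because the mean value form already delivers a linear expression in $x_l-y_l$ with no leftover. (Minor slip: you call $\varphi'(\cdots)G$ a ``deterministic factor''; it is of course random, but you clearly treat it as such in the next sentence.)
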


\begin{proof}
We start by observing that by \eqref{eq:Taylor} we have
\begin{align*}
& \prod_{i=1}^K
  \varphi_i(x_i)
  -
  \prod_{i=1}^K
  \varphi_i(y_i)\\
& \quad
  =
  \sum_{l=1}^K
  \prod_{i=1}^{l-1}
  \varphi_i(x_i)
  \prod_{j=l+1}^{K}
  \varphi_j(y_j)
   \big(
      \varphi_l(x_l)
      -
      \varphi_l(y_l)
    \big)
\\
& \quad
  =
  \sum_{l=1}^K
  \Bigg\langle
    \prod_{i=1}^{l-1}
    \varphi_i(x_i)
    \prod_{j=l+1}^{K}
    \varphi_j(y_j)
    \int_0^1
      \varphi_l'(y_l+\lambda(x_l-y_l))
    \diff{\lambda}
    ,
    x_l-y_l
  \Bigg\rangle\\
&
  \quad
  =:
  \sum_{l=1}^K
  \langle
    \gamma_l(x_1,\dots,x_{l}, y_l,\dots,y_K)
    ,
    x_l-y_l
  \rangle.
\end{align*}
Here we use the convention that an empty product equals 1.
We get
\begin{align*}
& \big|
    \E
    \big[
      \Phi(X)
      -
      \Phi(\tilde{X}^{h,k})
    \big]
  \big|
  =
  \Big|
  \sum_{l=1}^K
  \Big\langle
    \gamma_l(Y_l^{h,k})
    ,
    \int_0^T
      \big(
        X_t-\tilde{X}_t^{h,k}
      \big)
    \diff{\nu_{l,t}}
  \Big\rangle_{L^2(\Omega;H)}
  \Big|,
\end{align*}
where
\begin{align*}
  Y_l^{h,k}=
  \Big(
    \int_0^T
      X_t
    \diff{\nu_{1,t}}
    ,\dots,
    \int_0^T
      X_t
    \diff{\nu_{l,t}}
    ,
    \int_0^T
      \tilde{X}_t^{h,k}
    \diff{\nu_{l,t}}
    ,\dots,
    \int_0^T
      \tilde{X}_t^{h,k}
    \diff{\nu_{K,t}}
  \Big).
\end{align*}
By duality in the Gelfand triple $\mathbf{M}^{1,p}(H)\subset L^2(\Omega;H)\subset\mathbf{M}^{1,p}(H)^*$ we obtain
\begin{align*}
& \big|
  \E
  \big[
    \Phi(X)
    -
    \Phi(\tilde{X}^{h,k})
  \big]
  \big|\\
& \quad
  \leq
  \sum_{l=1}^K
  \big\|
    \gamma_l(Y_l^{h,k})
  \big\|_{\mathbf{M}^{1,p}(H)}
  \Big\|
    \int_0^T
      \big(
        X_t-\tilde{X}_t^{h,k}
      \big)
    \diff{\nu_{l,t}}
  \Big\|_{\mathbf{M}^{1,p}(H)^*}\\
& \quad
  \leq
  \sum_{l=1}^K
  \Big(
    \sup_{h,k\in(0,1)}
    \big\|
      \gamma_l(Y_l^{h,k})
    \big\|_{\mathbf{M}^{1,p}(H)}
  \Big)
  \big\|
    X-\tilde{X}^{h,k}
  \big\|_{L_{\nu_l}^1(0,T;\mathbf{M}^{1,p}(H)^*)}.
\end{align*}
Here 
$
  \gamma_l\in
  \mathcal{G}_{\mathrm{p}}^{1,r}
  (H^{K+1};H)
$
and
$
  Y_l^{h,k} \in \mathbf{M}^{1,rp}(H^{K+1})
$ with
$r=\sum_{i=1}^K m_i -1$. Therefore
\cite{AnderssonKruseLarsson}*{Lemma 3.3} applied with $U=H^{K+1}$ and $V=H$
gives for
$
  l
  \in
  \{
    1,\dots,K
  \}
$ the bound
\begin{align*}
  \sup_{h,k\in(0,1)}
  \big\|
    \gamma_l(Y_l^{h,k})
  \big\|_{\mathbf{M}^{1,p}(H)}
  \leq
  C_l
  \Big(
    1
    +
    \sup_{h,k\in(0,1)}
    \big\|
      Y_l^{h,k}
    \big\|_{\mathbf{M}^{1,rp}(H^{K+1})}^r
  \Big).
\end{align*}
Propositions \ref{lemma:Stability2} and \ref{lemma:Stability3} ensure that
\begin{align*}
&  \sup_{h,k\in(0,1)}
  \big\|
    \gamma_l(Y_l^{h,k})
  \big\|_{\mathbf{M}^{1,p}(H)}\\
& \leq
  \tilde{C}_l
  \Big(
    1
    +
    \sum_{i=1}^K
    \Big(
      \big\|
        X
      \big\|_{L_{\nu_i}^1(0,T;\mathbf{M}^{1,rp,p}(H))}^r
      +
      \sup_{h,k\in(0,1)}
      \big\|
        \tilde{X}^{h,k}
      \big\|_{L_{\nu_i}^1(0,T;\mathbf{M}^{1,rp,p}(H))}^r
    \Big)
  \Big)
  <\infty.
\end{align*}

Let $\tilde{X}$ be the process $\tilde{X}_t=X_{t_n}$ for $t\in[t_n,t_{n+1})$, $n\in\{0,\dots,N-1\}$. Proposition \ref{prop:Holder} and Lemma \ref{thm:strong2} give, for $l\in \{1,\dots,K\}$,
\begin{align*}
& \big\|
    X-\tilde{X}^{h,k}
  \big\|_{L_{\nu_l}^1(0,T;\mathbf{M}^{1,p}(H)^*)}\\
& \quad
  \leq
  \big\|
    X-\tilde{X}
  \big\|_{L_{\nu_l}^1(0,T;\mathbf{M}^{1,p}(H)^*)}
  +
  \big\|
    \tilde{X}-\tilde{X}^{h,k}
  \big\|_{L_{\nu_l}^1(0,T;\mathbf{M}^{1,p}(H)^*)}
  \lesssim
  h^{2\gamma}
  +
  k^{\rho\gamma}.
\end{align*}
This completes the proof.
\end{proof}

Finally, we formulate a corollary of Theorem~\ref{thm:main} that can
be used to prove convergence of covariances and higher order
statistics of approximate solutions.  We demonstrate this for
covariances;  higher order statistics can be treated in a similar
way.
\begin{corollary}
\label{cor:main}
Let $X$ and $X^{h,k}$ be the solutions to \eqref{eq:SVIE} and \eqref{eq:approx}, respectively. Let $\tilde{X}_t^{h,k}=X_n^{h,k}$, for $t\in[t_n,t_{n+1})$, $n\in\{0,\dots,N-1\}$ and $\tilde{X}_t^{h,k}=X_N^{h,k}$, for $t\in[t_N,T]$. For $K\geq1$, $\phi_1,\dots,\phi_K\in H$, $t_1,\dots,t_K\in(0,T]$, $\gamma\in[0,\beta)$, there exists $C>0$ such that
\begin{align*}
  \Big|
  \E
  \Big[
    \prod_{i=1}^K
    \big\langle
      X_{t_i},\phi_i
    \big\rangle
    -
    \prod_{i=1}^K
    \big\langle
      \tilde{X}_{t_i}^{h,k},\phi_i
    \big\rangle
  \Big]
  \Big|
  \leq
  C
  \big(
    h^{2\gamma}+k^{\rho\gamma}
  \big)
  ,\quad
  h,k\in(0,1).
\end{align*}
In particular, for $\phi_1,\phi_2\in H$, $t_1,t_2\in(0,T]$, it holds that
\begin{align*}
& \big|
   \mathrm{Cov}
  \big(
    \big\langle
      X_{t_1},\phi_1
    \big\rangle
    ,
    \big\langle
      X_{t_2},\phi_2
    \big\rangle
  \big)
  -
  \mathrm{Cov}
  \big(
    \big\langle
      \tilde{X}_{t_1}^{h,k},\phi_1
    \big\rangle
    ,
    \big\langle
      \tilde{X}_{t_2}^{h,k},\phi_2
    \big\rangle
  \big)
  \big|\\
&\qquad
  \leq
  C
  \big(
    h^{2\gamma}+k^{\rho\gamma}
  \big)
  ,\quad
  h,k\in(0,1).
\end{align*}
\end{corollary}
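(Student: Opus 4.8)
The plan is to recognize the first estimate as a direct instance of Theorem~\ref{thm:main} and then to deduce the covariance estimate by a routine product-splitting argument. First I would set $\varphi_i(x)=\langle x,\phi_i\rangle$ and $\nu_i=\delta_{t_i}$ for $i=1,\dots,K$. Since $\varphi_i$ is linear and bounded, it is infinitely often G\^ateaux differentiable with $\varphi_i'(x)\cdot u=\langle u,\phi_i\rangle$ and $\varphi_i^{(l)}=0$ for $l\geq2$; hence $|\varphi_i|_{\Cp^{1,2}(H;\R)}=\|\phi_i\|<\infty$ and $|\varphi_i|_{\Cp^{2,2}(H;\R)}=0$, so $\varphi_i\in\Cp^{2,2}(H;\R)$ with $m_i=2\geq2$. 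Moreover $\delta_{t_i}\in\mathcal{M}_T$ with $|\delta_{t_i}|=1$ and $\int_0^T Z_t\diff{(\delta_{t_i})_t}=Z_{t_i}$, so that $\Phi(Z)=\prod_{i=1}^K\varphi_i(\int_0^T Z_t\diff{\nu_{i,t}})=\prod_{i=1}^K\langle Z_{t_i},\phi_i\rangle$. Applying Theorem~\ref{thm:main} with these data immediately yields
\begin{align*}
  \Big|
  \E\Big[
    \prod_{i=1}^K\langle X_{t_i},\phi_i\rangle
    -
    \prod_{i=1}^K\langle\tilde X_{t_i}^{h,k},\phi_i\rangle
  \Big]
  \Big|
  \leq
  C\big(h^{2\gamma}+k^{\rho\gamma}\big),
  \quad h,k\in(0,1).
\end{align*}

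For the covariance statement, write, for real random variables $a,a',b,b'$,
\begin{align*}
  \mathrm{Cov}(a,b)-\mathrm{Cov}(a',b')
  =
  \big(\E[ab]-\E[a'b']\big)
  -
  \big(\E[a]\E[b]-\E[a']\E[b']\big),
\end{align*}
and further $\E[a]\E[b]-\E[a']\E[b']=\E[a-a']\,\E[b]+\E[a']\,\E[b-b']$. With $a=\langle X_{t_1},\phi_1\rangle$, $b=\langle X_{t_2},\phi_2\rangle$, $a'=\langle\tilde X_{t_1}^{h,k},\phi_1\rangle$, $b'=\langle\tilde X_{t_2}^{h,k},\phi_2\rangle$, the term $\E[ab]-\E[a'b']$ is bounded by the first part of the corollary with $K=2$, and the two remaining terms by the first part with $K=1$, multiplied by factors $\E[b]$, $\E[a']$ which are bounded uniformly in $h,k$. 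Indeed, $|\E[b]|\leq\|\phi_2\|\,\sup_{t\in[0,T]}\|X_t\|_{L^1(\Omega;H)}$ is finite by Proposition~\ref{prop:existence}, and $|\E[a']|\leq\|\phi_1\|\,\sup_{h,k\in(0,1)}\max_{n}\|X_n^{h,k}\|_{L^1(\Omega;H)}$ is finite by Proposition~\ref{lemma:Stability}. Combining these bounds gives the claimed estimate.

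There is no serious obstacle here: the corollary is a clean specialization of Theorem~\ref{thm:main}. The only points that require (minor) care are the verification that linear functionals lie in $\Cp^{2,2}(H;\R)$ so that the polynomial-growth hypothesis on the $\varphi_i$ in Theorem~\ref{thm:main} is met, that Dirac measures are admissible elements of $\mathcal{M}_T$, and the invocation of the uniform $L^1(\Omega;H)$-moment bounds for $X$ and $X^{h,k}$ when splitting the product of expectations; the stated analogue for higher order statistics follows by iterating the same telescoping identity.
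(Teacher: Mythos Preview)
Your proposal is correct and follows exactly the paper's approach: apply Theorem~\ref{thm:main} with $\varphi_i=\langle\cdot,\phi_i\rangle$ and $\nu_i=\delta_{t_i}$ for the first estimate, then use the same telescoping identity $\mathrm{Cov}(a,b)-\mathrm{Cov}(a',b')=(\E[ab]-\E[a'b'])-\E[a-a']\E[b]-\E[a']\E[b-b']$ for the covariance. Your write-up is in fact slightly more detailed than the paper's, as you explicitly verify $\varphi_i\in\Cp^{2,2}(H;\R)$ and cite Propositions~\ref{prop:existence} and~\ref{lemma:Stability} for the uniform bounds on $\E[b]$ and $\E[a']$, which the paper leaves implicit.
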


\begin{proof}
  The first statement follows from Theorem \ref{thm:main} by setting
  $\varphi_i=\langle\phi_i,\cdot\rangle$, $\nu_i=\delta_{t_i}$,
  $i\in\{1,\dots,K\}$, where $\delta_{t_i}$ is the Dirac measure
  concentrated at $t_i$. The second is a consequence of the first and
  the fact that
\begin{align*}
&\mathrm{Cov}
  \big(
    \big\langle
      X_{t_1},\phi_1
    \big\rangle
    ,
    \big\langle
      X_{t_2},\phi_2
    \big\rangle
  \big)
  -
  \mathrm{Cov}
  \big(
    \big\langle
      \tilde{X}_{t_1}^{h,k},\phi_1
    \big\rangle
    ,
    \big\langle
      \tilde{X}_{t_2}^{h,k},\phi_2
    \big\rangle
  \big)\\
&\quad
  =
    \E
    \big[
    \big\langle
      X_{t_1},\phi_1
    \big\rangle
    \big\langle
      X_{t_2},\phi_2
    \big\rangle
    \big]
    -
    \E
    \big[
    \big\langle
      \tilde{X}_{t_1}^{h,k},\phi_1
    \big\rangle
    \big\langle
      \tilde{X}_{t_2}^{h,k},\phi_2
    \big\rangle
    \big]\\
& \qquad
  -
    \E
    \big[
    \big\langle
      X_{t_1},\phi_1
    \big\rangle
    -
    \big\langle
      \tilde{X}_{t_1}^{h,k},\phi_1
    \big\rangle
    \big]
    \E
    \big[
    \big\langle
      X_{t_2},\phi_2
    \big\rangle
    \big]\\
&\qquad
 -
  \E
  \big[
    \big\langle
      \tilde{X}_{t_1}^{h,k},\phi_1
    \big\rangle
  \big]
  \E
  \big[
    \big\langle
      X_{t_2},\phi_2
    \big\rangle
    -
    \big\langle
      \tilde{X}_{t_2}^{h,k},\phi_2
    \big\rangle
  \big].
\end{align*}
\end{proof}

\section{Examples}
\label{sec5}
In this section we consider two different types of equations and write them in the abstract form of Section \ref{sec1}. We verify the abstract assumptions in both cases. Numerical approximation by the finite element method and suitable time discretization schemes are proved to satisfy the assumptions of Section \ref{sec:weak}. We start with parabolic stochastic partial differential equations and continue with Volterra equations in a separate subsection.

\subsection{Stochastic parabolic partial differential equations}
\label{sec5:1}
Let $\D\subset\R^d$ for $d=1,2,3$ be a convex polygonal domain. Let $\Delta=\sum_{i=1}^d\frac{\partial^2}{\partial x_i^2}$ be the Laplace operator and $f\in\Cb^2(\R;\R)$. We consider the stochastic partial differential equation:
\begin{equation*}
\begin{array}{ll}
  \dot{u}(t,x)
  =
  \Delta u(t,x)
  +
  f(u(t,x))
  +
  \dot{\eta}(t,x),
  \quad
& (t,x)\in(0,T]\times \D,\\
  u(t,x)=0,
  \quad
& (t,x)\in(0,T]\times\partial \D,\\
  u(0,x)=u_0(x),
  \quad
& x\in \D.
\end{array}
\end{equation*}
The noise $\dot{\eta}$ is not well defined as a function, as it is written, but makes sense as a random measure. We will study this equation in the abstract framework of Section ~\ref{sec1}. Let $H=L^2(\D)$, $A\colon\D(A)\subset H\rightarrow H$ be given by $A=-\Delta$ with $\D(A)=H_0^1(\D)\cap H^2(\D)$. Let $(S_t)_{t\in[0,T]}$ denote the analytic semigroup $S_t=e^{-tA}$ of bounded linear operators generated by $-A$. Assumption \ref{as:S} is satisfied with $\rho=1$, as is easily seen by a spectral argument. The drift $F\colon H\rightarrow H$ is the Nemytskii operator determined by the action $(F(g))(x)=f(g(x))$, $x\in\D$, $g\in H$. Assumption \eqref{as:F} for $F$ is verified in \cite{Wang2014} for $\delta=\frac{d}2+\epsilon$.

Let $(\mathcal{T}_h)_{h\in(0,1)}$ denote a family of regular triangulations of $\D$ where $h$ denotes the maximal mesh size. Let $(V_h)_{h\in[0,1]}$ be the finite element spaces of continuous piecewise linear functions with respect to $(\mathcal{T}_h)_{h\in(0,1)}$ and $P_h\colon H\to V_h$ be the orthogonal projector. The operators $A_h\colon V_h\rightarrow V_h$ are uniquely determined by
\begin{align*}
  \big\langle
    A_h \phi_h,\psi_h
  \big\rangle
  =
  \big\langle
    \nabla \phi_h,\nabla \psi_h
  \big\rangle,
  \quad
  \forall \phi_h,\psi_h\in V_h\subset \dot{H}^1.
\end{align*}
\begin{remark}
If the domain $\D$ is such that the pairs of eigenvalues and eigenfunctions $(\lambda_n,e_n)_{n\in\N}$ of $A$ are known, e.g., $\D=[0,1]^d$, then instead of finite element discretization one can consider a spectral Galerkin approximation. Let the eigenvalues be ordered in increasing order so that $\lambda_n\leq \lambda_{n+1}$ for every $n\in\N$. Further, let $h=\lambda_{N+1}^{-\frac12}$ and $V_h=\mathrm{span}\{\phi_n:n\leq N\}$. By $P_h\colon H\to V_h$ we denote the orthogonal projector and we define $A_h=AP_h=P_hA=P_hAP_h$.
\end{remark}

We discretize in time by a semi-implicit Euler-Maruyama method. By defining $B_1^{h,k}=(I+kA_h)^{-1}P_h$ and $B_n^{h,k}=(B_1^{h,k})^n$ for $n\geq1$, the discrete solutions $(X_n^{h,k})_{n=0}^N$ are recursively given by
\begin{align*}
& X_{n+1}^{h,k}
  =
  B_1^{h,k}X_n^{h,k}+kB_1^{h,k}F(X_n^{h,k})
  +
  \int_{t_n}^{t_{n+1}}
    B_1^{h,k}
  \diff{W_s},
  \quad
  n=0,\dots,N-1,\\
& X_0^{h,k}=P_hx_0.
\end{align*}
Iterating the scheme gives the discrete variation of constants formula \eqref{eq:approx}. For both finite element and spectral approximation the assumptions \eqref{as:B}, \eqref{as:E1}, \eqref{as:E2}, \eqref{as:e}, are valid, see, e.g., \cite{thomee2006}. For a proof of \eqref{ineq:Etilde}, see \cite{AnderssonKruseLarsson}*{Lemma 5.1}.

\subsection{Stochastic Volterra integro-differential equations}
\label{sec5:2}
Consider the semi-linear stochastic Volterra type equation
\begin{equation}
\begin{aligned}
  \dot{u}(t,x)
& =
  \int_0^t
    b(t-s)\Delta u(t,x)
  \diff{s}
  +
  f(u(t,x))
  +
  \dot{\eta}(t,x),
&&\quad
  (t,x)\in(0,T]\times\D,\\
  u(t,x)
& =0,
&&\quad
  (t,x)\in(0,T]\times\partial\D,\\
  u(0,x)
& =u_0,
&&\quad
  x\in\D.
\end{aligned}
\end{equation}
We assume that the kernel $b\in L_{\mathrm{loc}}^1(\R_+)$ is $4$-monotone; that is, $b$ is twice continuously differentiable on $(0,\infty)$, $(-1)^nb^{(n)}(t)\ge 0$ for $t>0$, $0\le n\le 2$, and $b^{(2)}$ is non-increasing and convex.
We suppose further that $\lim_{t\to \infty}b(t)=0$ and
\begin{align}\label{eq:b-smooth}
  \limsup_{t\rightarrow 0,\infty}
  \Big(
    \frac 1t
    \int_0^t
      s b(s)
    \diff{s}
  \Big)
  \Big/
  \Big(
    \int_0^t
      -s \dot{b}(s)
    \diff{s}
  \Big)
     < +\infty.
\end{align}
In this case it follows from \cite{Prussbook}*{Proposition 3.10} that the parameter $\rho$ in Assumption \ref{as:B} is given by
\begin{equation}\label{eq:sector}
  \rho
  =
  1
  +
  \frac{2}{\pi}
  \sup
  \{ | \mathrm{arg} \, \hat b(\lambda) |: \; \text{Re}\,\lambda >0 \}
  \in (1,2),
\end{equation}
where $\hat{b}$ denotes the Laplace transform of $b$. Finally, in order to be able to use non-smooth data estimates for the deterministic problem we suppose that $\hat{b}$ can be extended to an analytic function in a sector $\Sigma_{\theta}=\{z\in \mathbb{C}: |\mathrm{arg} \,z|<\theta\}$ with $\theta>\frac{\pi}{2}$ and
$|\hat{b}^{(k)}(z)|\le C|z|^{1-\rho-k}$, $k=0,1$, $z\in \Sigma_{\theta}$.
An important example is the kernel $b(t)=\frac{1}{\Gamma(\rho-1)}t^{\rho-2}e^{-\eta t}$, for some $\rho\in(1,2)$ and $\eta\ge 0$. When $\eta=0$, then the corresponding equation can be viewed as a fractional-in-time stochastic equation.

We write the equation in the abstract It\={o} form \eqref{eq:SVDE} with $A$, $F$, $W$, $x_0$ as in Subsection \ref{sec5:1}. Here one needs $\delta=\frac{d}2+\epsilon<\frac2\rho$ and this requires $\rho<\tfrac43$ and $\epsilon$ small in the case $d=3$ but causes no restrictions in the case $d=1,2$. Under the above assumptions there exist a resolvent family of operators $(S_t)_{t\in[0,T]}$ defined by the strong operator limit
\begin{align}
\label{eq:explicit_S}
  S_t
  =
  \sum_{j=1}^\infty
    s_{j,t}\,
    (e_j\otimes e_j);
    \quad
  \dot{s}_{j,t}
  +
  \lambda_j
  \int_0^t
    b(t-r)
    s_{j,r}
  \diff{r}
  =0
  ,\
  t>0
  ;\quad
  s_{j,0}=1.
\end{align}
Here $(\lambda_j,e_j)_{j\in\N}$ are the eigenpairs of $A$. The operator family $(S_t)_{t\in[0,T]}$ does not possess the semigroup property because of the presence of the memory term. It is the solution operator to the abstract linear homogeneous problem
\begin{align*}
  \dot{Y}_t
  +
  \int_0^t
    b(t-s)AY_s
  \diff{s}
  =
  0,\
  t\in[0,T];\quad Y_0=y_0,
\end{align*}
i.e., $Y_t=S_ty_0$. The inhomogeneous problem with right hand side $g(t)$ for Bochner integrable $g\colon[0,T]\rightarrow H$ is solved by the variation of constants formula
\begin{align*}
  Y_t=S_ty_0+\int_0^tS_{t-s}g(s)\diff{s},\quad t\in[0,T].
\end{align*}
By \cite{BaeumerGeissertKovacs2014}*{Lemma 4.4} condition \eqref{as:S} holds for $S$. Thus the setting of Section \ref{sec1} is applicable.

We now turn our attention to the numerical approximation by presenting the convolution quadrature that we use, which was introduced by Lubich \cite{LubichI,LubichII}. Let $(\omega_j^k)_{j\in\N}$ be weights determined by
\begin{align*}
  \hat{b}
  \Big(
    \frac{1-z}k
  \Big)
  =
  \sum_{j=0}^\infty
    \omega_j^k z^j,
  \quad
  |z|<1.
\end{align*}
Then we use the approximation
\begin{align*}
  \sum_{j=1}^n
    \omega_{n-j}^k
    f(t_j)
  \sim
  \int_0^{t_n}
    b(t_n-s)f(s)
  \diff{s},
  \quad
  f\in\C(0,T;\R).
\end{align*}
To discretize the time derivative we use a backward Euler method, which is explicit in the semilinear term $F$. Our fully discrete scheme then reads:
\begin{align*}
& X_{n+1}^{h,k}-X_n^{h,k}
  +
  k
  \sum_{j=1}^{n+1}
    \omega_{n+1-j}^k
    A_h X_j^{h,k}\\
& \qquad\qquad\qquad
  =
  kP_hF(X_n^{h,k})
  +
  \int_{t_{n}}^{t_{n+1}}
    P_h
  \diff{W_t},
  \quad
  n=0,\dots,N-1,\\
& X_0^{h,k}=P_hx_0.
\end{align*}
It is possible to write $(X_n^{h,k})_{n=0}^N$ as a variation of
constants formula \eqref{eq:approx}. Indeed, it is shown in
\cite{kovacs2013} that one has the explicit representation
\begin{align*}
  B_n^{h,k}
  =
  \int_0^\infty
    S_{ks}^hP_h\frac{e^{-s}s^{n-1}}{(n-1)!}
  \diff{s},
  \quad
  n\geq1,
\end{align*}
where
\begin{align*}
  S_t^h
  =
  \sum_{j=1}^{N_h}
    s_{j,t}^h\,(e_j^h\otimes e_j^h)P_h;
  \quad
  \dot{s}_{j,t}^h
  +
  \lambda_j^h
  \int_0^t
    b(t-r)
    s_{j,r}^h
  \diff{r}
  =0,
  \
  t>0
  ;\quad
  s_{j,0}^h=1,
\end{align*}
and $(\lambda_j^h,e_j^h)_{j=1}^{N_h}$ are the eigenpairs corresponding
to $A_h$. The stability \eqref{as:B} holds by
\cite{kovacs2014}*{Theorem 3.1} and the smooth data error estimate
\eqref{as:E1} was proved in \cite{kovacs2013}*{Remark 5.3}. It remains
to verify \eqref{as:E2}. By \cite{kovacs2014}*{Theorem 3.1} there
exist $\tilde{C}$ so that
\begin{align*}
  \big\|
    E_n^{h,k}
  \big\|_{\LB}
  \leq
  \tilde{C}
  t_n^{-\frac{\delta}2}
  \big(
    h^{\frac\delta\rho}+k^{\frac{\delta}2}
  \big),
  \quad
  0\leq \delta\leq2,\
  n=1,\dots,N.
\end{align*}
Let $0\leq\delta\leq2$. Interpolation with $0\leq{s}\leq1$ yields
\begin{align*}
  \big\|
    A^{\frac{s}{2\rho}}E_{n,\theta}
  \big\|_{\LB}
& \leq
  \big\|
    E_n^{h,k}
  \big\|_{\LB}^{1-{s}}
  \big\|
    A^{\frac1{2\rho}}E_n^{h,k}
  \big\|_{\LB}^{{s}}\\
& \leq
  \big\|
    E_n^{h,k}
  \big\|_{\LB}^{1-{s}}
  \Big(
    \big\|
      A^\frac1{2\rho} S_{t_n}
    \big\|_{\LB}
    +
    \big\|
      A^\frac1{2\rho} B_n^{h,k}
    \big\|_{\LB}
  \Big)^{{s}}\\
& \leq
  \Big(
    \tilde{C}
    t_n^{-\frac{\delta}2}
    \big(
      h^{\frac\delta\rho}
      +
      k^{\frac{\delta}2}
    \big)
  \Big)^{1-{s}}
  \big(2L_{\frac12}
    t_n^{-\frac12}
  \big)^{{s}}\\
& \leq
  \tilde{C}^{1-{s}}(2L_\frac12)^{s}
  t_n^{-\frac{\delta(1-{s})+{s}}2}
  \big(
    h^{\frac{\delta(1-{s})}\rho}
    +
    k^{\frac{\delta(1-{s})}2}
  \big).
\end{align*}
Setting $\sigma=\delta(1-{s})$ and $R_{s} =\tilde{C}^{1-{s}}(2L_{\frac12})^{{s}}$ yields the estimate
\begin{align*}
  \big\|
    A^\frac{s}{2\rho}
    E_n^{h,k}
  \big\|_{\LB}
  \leq
  R_{s}
  t_n^{-\frac{\sigma+{s}}2}
  \big(
    h^{\frac\sigma\rho}+k^{\frac{\sigma}2}
  \big),
  \quad
  0\leq \sigma\leq2,\
  0\leq {s}\leq1-\frac\sigma2,
\end{align*}
for $n=1,\dots,N$. Therefore \eqref{as:E2} holds.

\subsection*{Acknowledgement}
We thank Arnaud Debussche for suggesting the consideration of
covariances and higher order statistics.  This motivated us to extend
Theorem 4.7 from its original statement with $K=1$ to $K\ge 1$ and
also to include Corollary \ref{cor:main}. The second author was
partially supported by the Marsden Fund project number UOO1418.

\def\polhk#1{\setbox0=\hbox{#1}{\ooalign{\hidewidth
  \lower1.5ex\hbox{`}\hidewidth\crcr\unhbox0}}}
\begin{bibdiv}
\begin{biblist}

\bib{AnderssonKruseLarsson}{article}{
      author={Andersson, A.},
      author={Kruse, R.},
      author={Larsson, S.},
       title={Duality in refined {S}obolev-{M}alliavin spaces and weak
  approximation of {SPDE}},
        date={2015},
     journal={Stochastic Partial Differential Equations: Analysis and Computations},
        note={Online first}
}

\bib{AnderssonLarsson}{article}{
      author={Andersson, A.},
      author={Larsson, S.},
       title={Weak convergence for a spatial approximation of the nonlinear
  stochastic heat equation},
        date={2015},
     journal={Math. Comp.},
        note={Online first},
}

\bib{BaeumerGeissertKovacs2014}{article}{
      author={Baeumer, B.},
      author={Geissert, M.},
      author={Kov\'{a}cs, M.},
       title={Existence, uniqueness and regularity for a class of semilinear
  stochastic {V}olterra equations with multiplicative noise},
        date={2014},
     journal={J. Differential Equations},
      number={258},
       pages={535\ndash 554},
  url={http://www.sciencedirect.com/science/article/pii/S0022039614003830},
        note={http://dx.doi.org/10.1016/j.jde.2014.09.020},
}

\bib{Brehier}{article}{
      author={Br\'ehier, C.-E.},
       title={Approximation of the invariant measure with an {Euler} scheme for
  stochastic {PDE}s driven by space-time white noise},
    language={English},
        date={2014},
        ISSN={0926-2601},
     journal={Potential Analysis},
      volume={40},
       pages={1\ndash 40},
         url={http://dx.doi.org/10.1007/s11118-013-9338-9},
}

\bib{Brehier3}{article}{
      author={Br\'ehier, C.-E.},
      author={Kopec, M.},
       title={Approximation of the invariant law of {SPDE}s: error analysis
  using a {P}oisson equation for a full-discretization scheme},
        date={2013},
     journal={arXiv:1311.7030},
}

\bib{conus2014}{article}{
      author={Conus, D.},
      author={Jentzen, A.},
      author={Kurniawan, R.},
       title={Weak convergence rates of spectral {G}alerkin approximations for
  {SPDE}s with nonlinear diffusion coefficients},
        date={2014},
     journal={arXiv:1408.1108},
}

\bib{daprato1992}{book}{
      author={Da~Prato, G.},
      author={Zabczyk, J.},
       title={Stochastic {E}quations in {I}nfinite {D}imensions},
      series={Encyclopedia of Mathematics and its Applications},
   publisher={Cambridge University Press},
     address={Cambridge},
        date={1992},
      volume={44},
        ISBN={0-521-38529-6},
}

\bib{debussche2011}{article}{
      author={Debussche, A.},
       title={Weak approximation of stochastic partial differential equations:
  the nonlinear case},
        date={2011},
        ISSN={0025-5718},
     journal={Math. Comp.},
      volume={80},
       pages={89\ndash 117},
}

\bib{elliott1992}{article}{
      author={Elliott, C.~M.},
      author={Larsson, S.},
       title={Error estimates with smooth and nonsmooth data for a finite
  element method for the {C}ahn-{H}illiard equation},
        date={1992},
        ISSN={0025-5718},
     journal={Math. Comp.},
      volume={58},
       pages={603\ndash 630, S33\ndash S36},
}

\bib{FuhrmanTessitore}{article}{
      author={Fuhrman, M.},
      author={Tessitore, G.},
       title={Nonlinear {K}olmogorov equations in infinite dimensional spaces:
  the backward stochastic differential equations approach and applications to
  optimal control},
        date={2002},
     journal={Ann. Probab.},
      volume={30},
       pages={1397\ndash 1465},
         url={http://dx.doi.org/10.1214/aop/1029867132},
}

\bib{hausenblas2003Weak}{incollection}{
      author={Hausenblas, E.},
       title={Weak approximation for semilinear stochastic evolution
  equations},
        date={2003},
   booktitle={Stochastic analysis and related topics {VIII}},
      series={Progr. Probab.},
      volume={53},
   publisher={Birkh\"auser},
     address={Basel},
       pages={111\ndash 128},
}

\bib{hausenblas2010}{article}{
      author={Hausenblas, E.},
       title={Weak approximation of the stochastic wave equation},
        date={2010},
        ISSN={0377-0427},
     journal={J. Comput. Appl. Math.},
      volume={235},
       pages={33\ndash 58},
}

\bib{jentzen2015}{article}{
      author={Jentzen, A.},
      author={Kurniawan, R.},
       title={Weak convergence rates for {E}uler-type approximations of
  semilinear stochastic evolution equations with noneinear diffusion
  coefficients},
        date={2015},
     journal={arXiv:1501.03539},
}

\bib{jentzen2010b}{article}{
      author={Jentzen, A.},
      author={R\"ockner, M.},
       title={Regularity analysis for stochastic partial differential equations
  with nonlinear multiplicative trace class noise},
        date={2012},
        ISSN={0022-0396},
     journal={J. Differential Equations},
      volume={252},
       pages={114\ndash 136},
         url={http://dx.doi.org/10.1016/j.jde.2011.08.050},
}

\bib{kovacs2013}{article}{
      author={Kov{\'a}cs, M.},
      author={Printems, J.},
       title={Strong order of convergence of a fully discrete approximation of
  a linear stochastic {V}olterra type evolution equation},
        date={2014},
        ISSN={0025-5718},
     journal={Math. Comp.},
      volume={83},
       pages={2325\ndash 2346},
         url={http://dx.doi.org/10.1090/S0025-5718-2014-02803-2},
}

\bib{kovacs2014}{article}{
      author={Kov{\'a}cs, M.},
      author={Printems, J.},
       title={Weak convergence of a fully discrete approximation of a linear
  stochastic evolution equation with a positive-type memory term},
        date={2014},
        ISSN={0022-247X},
     journal={J. Math. Anal. Appl.},
      volume={413},
       pages={939\ndash 952},
         url={http://dx.doi.org/10.1016/j.jmaa.2013.12.034},
}

\bib{LubichI}{article}{
      author={Lubich, C.},
       title={Convolution quadrature and discretized operational calculus.
  {I}},
        date={1988},
        ISSN={0029-599X},
     journal={Numer. Math.},
      volume={52},
       pages={129\ndash 145},
         url={http://dx.doi.org/10.1007/BF01398686},
}

\bib{LubichII}{article}{
      author={Lubich, C.},
       title={Convolution quadrature and discretized operational calculus.
  {II}},
        date={1988},
        ISSN={0029-599X},
     journal={Numer. Math.},
      volume={52},
       pages={413\ndash 425},
         url={http://dx.doi.org/10.1007/BF01462237},
}

\bib{pazy1983}{book}{
      author={Pazy, A.},
       title={{Semigroups of Linear Operators and Applications to Partial
  Differential Equations}},
      series={Applied Mathematical Sciences},
   publisher={Springer},
     address={New York},
        date={1983},
      volume={44},
        ISBN={0-387-90845-5},
}

\bib{roeckner2007}{book}{
      author={Pr{\'e}v{\^o}t, C.},
      author={R{\"o}ckner, M.},
       title={A {Concise} {Course} on {Stochastic} {Partial} {Differential}
  {Equations}},
      series={Lecture Notes in Mathematics},
   publisher={Springer},
     address={Berlin},
        date={2007},
      volume={1905},
        ISBN={978-3-540-70780-6; 3-540-70780-8},
}

\bib{Prussbook}{book}{
      author={Pr{\"u}ss, J.},
       title={{Evolutionary Integral Equations and Applications}},
      series={Modern Birkh\"auser Classics},
   publisher={Birkh\"auser/Springer Basel AG, Basel},
        date={1993},
        ISBN={978-3-0348-0498-1},
         url={http://dx.doi.org/10.1007/978-3-0348-8570-6},
        note={[2012] reprint of the 1993 edition},
}

\bib{thomee2006}{book}{
      author={Thom{\'e}e, V.},
       title={{Galerkin Finite Element Methods for Parabolic Problems}},
     edition={Second},
      series={Springer Series in Computational Mathematics},
   publisher={Springer-Verlag},
     address={Berlin},
        date={2006},
      volume={25},
        ISBN={978-3-540-33121-6; 3-540-33121-2},
}

\bib{UMD}{unpublished}{
      author={van Neerven, J. M. A.~M.},
       title={Stochastic {E}volution {E}quations},
        date={2008},
        note={ISEM lecture notes},
}

\bib{Wang}{article}{
      author={Wang, X.},
       title={An exponential integrator scheme for time discretization of
  nonlinear stochastic wave equation},
        date={2013},
     journal={arXiv:1312.5185},
}

\bib{Wang2014}{article}{
      author={Wang, X.},
       title={Weak error estimates of the exponential {E}uler scheme for
  semi-linear {SPDE}s without {M}alliavin calculus},
        date={2014},
     journal={arXiv:1408.0713},
}

\bib{WangGan}{article}{
      author={Wang, X.},
      author={Gan, S.},
       title={Weak convergence analysis of the linear implicit {E}uler method
  for semilinear stochastic partial differential equations with additive
  noise},
        date={2013},
        ISSN={0022-247X},
     journal={J. Math. Anal. Appl.},
      volume={398},
       pages={151\ndash 169},
         url={http://dx.doi.org/10.1016/j.jmaa.2012.08.038},
}

\end{biblist}
\end{bibdiv}


\end{document}